\definecolor{Red}{cmyk}{0,1,1,0}
\definecolor{Blue}{cmyk}{1,1,0,0}
\theoremstyle{plain}
\newtheorem{theorem}{Theorem}[section]
\newtheorem{corollary}[theorem]{Corollary}
\newtheorem{proposition}[theorem]{Proposition}
\newtheorem{lemma}[theorem]{Lemma}
\newtheorem*{theorem-main}{Main Theorem}
\theoremstyle{definition}
\newtheorem{definition}[theorem]{Definition}
\newtheorem{remark}[theorem]{Remark}
\theoremstyle{definition}
\newtheorem{maintheorem}{Theorem}
\newtheorem{maincorollary}[maintheorem]{Corollary}
\newcommand{\al} {\alpha}
       \newcommand{\De}{\Delta}
\newcommand{\vep}{\varepsilon}
\newcommand{\om} {\omega}
\newcommand{\supp}{\operatorname{supp}}
\newcommand{\diam}{\operatorname{diam}}
\newcommand{\leb}{Leb}
\newcommand{\cC}{\mathcal{C}}
\newcommand{\cP}{\mathcal{P}}
\newcommand{\cE}{\mathcal{E}}
\newcommand{\cQ}{\mathcal{Q}}
\newcounter{main}
\let\c@equation\c@theorem
\numberwithin{equation}{section}
\title[Exponential decay of correlations for Gibbs measures of Axiom A attractors]{Exponential decay of correlations for Gibbs measures \\ on attractors of Axiom A flows}
\author[D. Daltro]{Diego Daltro}
\address{Instituto de Matem\'atica e Estat\'istica, Universidade Federal da Bahia \\ 
Av. Ademar de Barros s/n, 40170-110 Salvador, Brazil}
\email{daltrodiego@gmail.com}
\author[P. Varandas]{Paulo Varandas}
\address{ CMUP, Universidade do Porto \& Departamento de Matem\'atica, Universidade Federal da Bahia\\
Av. Ademar de Barros s/n, 40170-110 Salvador, Brazil}
\email{paulo.varandas@ufba.br}
\keywords{Decay of correlations, Gibbs measures, Equilibrium states, Uniform hyperbolicity}
\subjclass[2010]{Primary: 37D20, 37A25; Secondary: 37C10}
\date{\today}
\begin{document}

\begin{abstract}
In this paper we study the decay of correlations for Gibbs measures associated to codimension one Axiom A attractors for flows. 
We prove that a codimension one Axiom A attractors whose strong stable foliation is $C^{1+\alpha}$ either have exponential 
decay of correlations with respect to all Gibbs measures associated to H\"older continuous potentials or their stable and unstable bundles 
are jointly integrable. As a consequence, there exist $C^1$-open sets of $C^3$-vector fields generating Axiom A flows having attractors
so that: (i) mix exponentially with respect to equilibrium states associated with H\"older continuous potentials, (ii) their time-1 maps satisfy an almost
sure invariance principle, and (iii) the growth of the number of closed orbits of length $T$ is described by the topological entropy of the attractor.
\end{abstract}

\maketitle


\section{Introduction}

The classical thermodynamic formalism for hyperbolic diffeomorphisms and flows ensures that equilibrium states exist
and are unique for every H\"older continuous potential, restricted to every basic piece of the non-wandering set (see e.g. ~\cite{Bo75,BR75,Ru76b,Si72}).
A common feature of both discrete-time and continuous-time contexts is that equilibrium states are invariant Gibbs measures, a fact which allows one to study several of its statistical properties. However, while equilibrium states of a topologically mixing hyperbolic set for a diffeomorphism have exponential decay of correlations for H\"older continuous observables (cf.~\cite{Bo75})  there exist Axiom A flows displaying hyperbolic basic sets (suspended horseshoes)
with arbitrarily slow mixing rates ~\cite{Pol85,ruelle1983}. The following conjecture motivated many contributions in this research area:

\medskip
\noindent {\bf Conjecture~1 (Bowen-Ruelle):} \emph{Every topologically mixing Anosov flow mixes exponentially with respect to all its equilibrium states.}
\medskip

Bowen-Ruelle conjecture has been supported by the general belief in two other conjectures, 
namely that Anosov flows whose stable and unstable bundles are jointly integrable are not topologically mixing, and that Anosov flows whose  stable and unstable bundles are not jointly integrable mix exponentially fast. 
It is worth mentioning that, despite some groundbreaking advances on the exponential mixing rate of the SRB measure for contact Anosov flows, Anosov flows with smooth invariant foliations and codimension one Anosov flows and a recent proof of the Bowen-Ruelle conjecture for $C^\infty$-Anosov flows in dimension three (cf.~\cite{BW,chernov98,D,li,St0,T,TZ}), the general picture concerning the decay of correlations for Anosov flows is still far from complete.

\smallskip
Inspired by Chernov ~\cite{chernov98}, Dolgopyat introduced in the late nineties a systematic approach to the decay of correlations for flows using 
the non-integrability of invariant subbundles (measured in terms of a temporal distortion function or the Poincar\'e first return map to suitable
cross-sections). He proved that
$C^{2+\varepsilon}$ transitive Anosov flows
  whose $C^{1}$~stable and unstable foliations which are
  jointly nonintegrable mix exponentially with respect to
  the SRB measure and
H\"older continuous observables ~\cite{D},
and that superpolynomial decay of correlations is typical, in a measure theoretic sense of
prevalence, for Axiom~A flows with respect to any
equilibrium state associated to a H\"older continuous potential~\cite{dolgopyat98}. 
Building on these ideas, Field, Melbourne and
T\"or\"ok~\cite{FMT} proved that there exist $C^{2}$-open,
$C^{r}$-dense sets of $C^{r}$-Axiom~A flows ($r\geqslant 2$)
whose non-trivial basic sets mix superpolynomially. More recently,
Baladi and Vall\'ee \cite{BV}, \'Avila, Gou\"ezel and Yoccoz~\cite{AGY} and Ara\'ujo and Melbourne~\cite{AM}
established criterions to prove exponential decay of correlations for the SRB measures associated to suspension (semi)flows over piecewise 
expanding and piecewise hyperbolic maps. 
The previous criteria have some common requirements, among them the smoothness of the strong stable foliation, a uniform nonintegrability condition (UNI) on 
the roof function and the fact that the probability measure preserved by a quotiented Poincar\'e first return map satisfies the Federer property.
Recall that a measure $\nu$ on a metric space $J$ 
 satisfies the Federer property if there are $A,K>0$ such that
$\nu(B(x,Ar))\leqslant K\, \nu(B(x,r))$
for any small $r>0$ and every $x\in \supp\nu$, 
and that the Lebesgue measure satisfies this property. 
Therefore, these criteria lead to very successful applications in the context of Axiom A attractors and geometric Lorenz attractors, 
where the SRB measure is absolutely continuous with respect to the Lebesgue measure along the unstable foliation,
and the Teichm{\"u}ller flow, where the Masur-Veech invariant measure is absolutely continuous with respect to Lebesgue 
(see \cite{ABV,AM,ArVar,AGY} and references therein).
Actually, in view of the previous results yielding exponential mixing for the SRB measure of Axiom A attractors it is natural to pose the following:

\medskip
\noindent {\bf Conjecture~2:} \emph{Every topologically mixing attractor of an Axiom A flow mixes exponentially with respect to all its equilibrium states.}
\medskip

This conjecture encompasses Bowen-Ruelle conjecture, as every transitive Anosov flow is clearly an attractor, but considers also the case 
of Axiom A attractors which are compact proper subsets of the manifold. Actually, it is an abstraction of Ruelle's conjecture in \cite{P,Ru86} and, in this context, the space of equilibrium states coincides with the space invariant Gibbs measures. We observe that there exist horseshoes which may display arbitrary slow 
mixing rates (cf.~\cite{ruelle1983}). However, the geometry of attractors is substantially different, as these contain all unstable manifolds. 

\smallskip
This paper aims to contribute to the study of the correlations decay rate for Gibbs measures of Axiom A attractors. 
Until quite recently, very few answers were known concerning decay of correlations for other classes of equilibrium states rather than SRB measures. 
A first motive concerned the fact that the Federer property is not true for Gibbs measures associated to piecewise H\"older continuous potentials 
(which arise naturally from the reduction from suspension flows to the description of the base dynamics, cf. Subsection~\ref{subsec:termo}), as pointed out by Baladi and Vall\'ee in \cite[Remark 2.1]{BV}, where they point a missing argument in \cite{D,P,PS}. A second one is of technical nature, as most available methods required the smoothness (or Lipschitz continuity) of both 
strong stable and strong unstable foliations.
In \cite{Sto}, Stoyanov proved exponential decay of correlations for Gibbs measures on general hyperbolic basic sets 
under the assumptions of a local nonintegrability condition, that the strong stable foliation is uniformly Lipschitz continuous and a regular distortion property along
unstable manifolds (we refer the reader to \cite{Sto} for the definitions), using a modified version of Dolgopyat's method which does not requires the Federer property. The regular distortion property along unstable manifolds turns out to hold for Axiom A basic sets whose unstable bundle satisfies a pinching condition 
(cf. \cite{St1}), in particular it holds for codimension one basic sets. However, it remained unclear whether such equilibrium states satisfy the Federer property, or some reasonable weaker concept that still allows one to recover Dolgopyat's original strategy (cf.~\cite[page~1095]{Sto}). 
More recently, in \cite{DV} we consi\-dered suspension (semi)flows over ${C}^{1+\alpha}$ {full branch} Markov piecewise expanding interval maps 
and piecewise hyperbolic maps and roof functions satisfying the UNI condition, and proved exponential decay of correlations with respect to Gibbs measures associated to piecewise H\"older continuous potentials. The argument explored that such Gibbs measures do have uniform bounds for nested cylinder 
sets and allowed to conclude that some special classes of codimension one 
attractors for $C^{1+\alpha}$ Axiom A flows 
mix exponentially fast for every equilibrium state associated to a H\"older continuous potential. 
Some weaker Federer properties have been considered by Gou\"ezel~\cite{Go}, 
to implement variations around Dolgopyat's argument 
for hyperbolic skew-products, and by Gou\"ezel and Stoyanov~\cite{GS,St0} which considered Gibbs measures admiting Pesin sets with exponentially small tails 
and used them to study decay of correlations for contact Anosov flows (we refer the reader to these references for the definitions). 
Our approach here builds over our previous work \cite{DV}, and it is inspired by the work of Tsujii ~\cite{T0} 
on suspension semi-flows of angle-multiplying maps displaying a transversality condition 
and Butterley and War~\cite{BW} on the decay of correlations for codimension 
one Anosov flows. Ultimately, we relate the exponential decay of correlations for Gibbs measures of codimension one Axiom A attractors 
with the absence of transversality for the Poincar\'e first return time (expressed in terms of twisted transfer operators) 
and the nonintegrability of the stable and unstable subbundles.

This paper is organized as follows. In Section~\ref{sec:statements} we describe the setting and state the main results. The main result stating that codimension one Axiom A attractors with smooth stable foliation mix exponentially fast with respect to all Gibbs measures provided that the stable and unstable bundle are not
jointly integrable (Theorem~\ref{mainthm:hyp}) appears as a byproduct of a somewhat similar statement, involving the concept of transversality, 
for suspension semiflows (Theorem~\ref{thm:A}). 
In Section~\ref{geom} we describe geometric aspects of hyperbolic attractors. More precisely, we recall the symbolic dynamics of hyperbolic attractors and 
semiconjugacy to suspension semiflows, describe the relation between joint integrability of hyperbolic bundles 
and topological mixing, and construction of adapted partitions for piecewise expanding interval maps.
Twisted transfer operators appear in Section~\ref{transfer}, where we prove a Lasota-Yorke inequality
recall some preliminary results 
on Gibbs measures for piecewise expanding interval maps and transfer operators. 
Section~\ref{sec:UNI-transversal} contains some of the core estimates in the paper and may be of independent interest. Here we show that in the presence of transversality one can recover  a UNI condition, adapted to the context of suspension flows over Markov maps which may not be full branch. Indeed, 
first we 
prove that either the roof function is cohomologous to a piecewise constant one or it satisfies notion of transversality, measured in terms of the twisted transfer operators for arbitrary potentials (cf. Proposition~\ref{transv2}). 
Second, while the usual UNI condition stated
for full branch Markov maps requires the roof function to have a uniform displacement for all points (see Definition~\ref{def:UNI}), the transversality 
condition guarantees a pointwise estimate which one needs to spread for uniform size domains. In particular, while the statement of the cancellation lemma
has the same statement as the one associated to the UNI condition, the construction and constants are chosen in a somewhat unusual order (see e.g. the statement of Proposition~\ref{prop:transv-implies-wUNI}, Remark~\ref{rmk:choice-constants} and equation~\ref{eq:cts}).
Most of the results of this section hold for suspension semiflows over piecewise expanding maps in arbitrary dimension, but there are two instances where 
having one-dimensional unstable manifolds is used crucially. First, the cancellations lemma (Lemma~\ref{lemm:cancel-N}) is proved for maps in the interval and 
uses the mean value theorem. Second, as the boundary of Markov partitions may have a fractal structure \cite{Bo78} (except for two dimensional diffeomorphisms or piecewise expanding Markov interval maps, where it is formed by smooth one dimensional submanifolds and points, respectively), 
its boundary may contain cusps. In the case of the interval we use the following important fact: there exists $a>0$ so that the intersection of any ball $B$ with small radius $r$ centered at a point $x$ with the Markov subinterval $P$ containing $x$  contains a ball of radius $a \cdot r$ (actually here it is enough to take $a=1/2$). 
This fact is crucial to expand pointwise cancellations to a neighborhood of it (see item (2) in Definition~\ref{def:UNI-weak}).

In Section~\ref{sec:flows-decay-transversal} we prove the exponential decay of correlations for suspension flows over piecewise expanding Markov interval maps and roof functions which are not cohomologous to a piecewise constant one. In view of the aforementioned results, the roof function satisfies the transversality and the uniform nonintegrability conditions. 
Once more we cannot follow \cite{BW} which, as the SRB measure is absolutely continuous with respect to Lebesgue, use the transversality to prove a 
van der Corput lemma (cancellations of oscillatory integrals). This strategy does not find a counterpart in the case of other equilibrium states. For that reason, we make a bridge between the concept of transversality
and a weaker pointwise version of the uniform nonintegrability condition (adapted to piecewise expanding but not full branch piecewise expanding maps). This
allow us to obtain cancellations of oscillatory integrals by constructing a non-homogeneous partitions where all elements have roughly the same size, tailored to Gibbs measures and using a mild Federer property on sub-cylinders of the Markov partition (see Subsection~\ref{subsec:addapted-partititons}).
In particular our strategy offers an alternative proof for the cancellations associated to the SRB measure 
in \cite{BW}.

\section{Statement of the main results}\label{sec:statements}

As mentioned at the introduction our main results concern decay of correlations for hyperbolic attractors.
Let $N$ be a compact Riemannian manifold and $\|\cdot\|$ denote the Riemannian norm on the tangent space $TN$.  
Assume that $(\Phi_t)_{t\in\mathbb R}$ is a $C^1$-smooth flow. We say that a $\Phi_t$-invariant subset $\Lambda\subset N$ is a \emph{hyperbolic set} 
if there exists a $D\Phi^{t}$-invariant and continuous splitting
$T_{\Lambda}N = E^{s} \oplus E^{c} \oplus E^{u} $
where $ E^{c} $ is the one-dimensional bundle tangent to
the flow and there exists $C,\lambda >0$ such that $\|
  \left. D\Phi_t\right|_{E^{s}} \| \leqslant Ce^{-\lambda t}$,
and $\| \left. D\Phi_{-t}\right|_{E^{u}} \| \leqslant
Ce^{-\lambda t}$, for all $t\geqslant 0$.
A \emph{hyperbolic basic set} is an invariant, closed, topologically
transitive, locally maximal hyperbolic set. 
Recall that a basic set $\Lambda$ is called an
\emph{attractor} if it is transitive and there exists a neighbourhood $U$ of $\Lambda$, and $t_0>0$, such that
$\Lambda = \bigcap_{t\in  \mathbb R_+}\Phi_{t} (U)$.  
We say that a $C^1$-flow $(\Phi_t)_{t\in \mathbb R}$ is: (i) \emph{Axiom A} if its non-wandering set $\Omega$ is a hyperbolic set and it is the closure of the set of critical elements, formed by hyperbolic periodic orbits and hyperbolic singularities, and (ii) \emph{Anosov} if $\Lambda=N$ is a hyperbolic set for the flow. 
Since hyperbolic basic sets  are transitive, there exists a unique equilibrium state 
$\mu_\phi$ for the flow $(\Phi_t)_{t\in \mathbb R}$
on the attractor $\Lambda$ with respect to each  H\"older continuous potential $\phi$
(cf. \cite{BR75}). In this paper we will only consider equilibrium states $\mu_\phi$ for some
hyperbolic basic set with respect to H\"older continuous observables. 
We say that $\mu$ has \emph{exponential decay of correlations} if for every $\beta>0$ there exist constants $c,C>0$ (depending on $\mu$) 
such that 
$$
C_{\mu,v,w}(t):= \Big|\int v (w\circ\Phi_t)\,  d\mu_\phi-\int v \,d\mu_\phi \, \int w \,d\mu_\phi\Big| 
	\leqslant Ce^{-ct}\, \|v\|_{{C}^\beta}|w|_{\infty}
$$
for all $v\in{C}^\beta(N), w\in L^{\infty}(N)$ and $t>0$. The function $C_{\mu,v,w}(t)$ is called a  \textit{correlation function}.
If the stable and unstable bundles are jointly integrable then there exists a
codimension one invariant foliation which is transversal to
the flow direction, and this foliation is subfoliated
by both the stable and unstable foliations (see \cite[Proposition~1.6]{Plante}
whose proof holds for general Axiom A attractors).
Moreover, if this is the case then the flow is
(bounded-to-one) semiconjugate to a locally constant
suspension over a subshift of finite type
(see~\cite[Proposition~3.3]{FMT}), and these flows may not be topologically mixing, 
or may be topologically mixing but have mixing rates slower than exponential.

\begin{maintheorem}\label{mainthm:hyp}
\emph{Let $N$ be a compact Riemannian manifold, $(\Phi_t)_{t\in \mathbb R}$ be a $C^{1+\alpha}$ smooth flow and
 $\Lambda\subset N$ be a hyperbolic attractor such that the unstable bundle is one-dimensional and 
the stable bundle is $C^{1+\alpha}$.
If the stable and unstable foliations are not jointly integrable then 
$(\Phi_t)_{t\in \mathbb R}$ has exponential mixing with respect to every equilibrium state. }
\end{maintheorem}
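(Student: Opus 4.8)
The plan is to derive Theorem~\ref{mainthm:hyp} from the suspension–semiflow statement Theorem~\ref{thm:A} by means of the symbolic reduction of a codimension one hyperbolic attractor, so that the whole argument splits into a geometric reduction step, a cohomological translation of the non-integrability hypothesis, and an appeal to the abstract theorem.

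\textbf{Step 1: reduction to a suspension semiflow over a piecewise expanding interval map.} Since $\Lambda$ is a hyperbolic attractor whose unstable bundle is one-dimensional, I would first fix a Markov family of smooth local cross-sections transverse to the flow (Bowen–Ratner), giving that $(\Phi_t)|_{\Lambda}$ is bounded-to-one semiconjugate to a suspension flow over a subshift of finite type $\sigma\colon\Sigma\to\Sigma$ with Hölder return-time roof function. Because the unstable bundle is one-dimensional and the strong stable foliation is $C^{1+\alpha}$, quotienting each cross-section along its strong–stable holonomy turns the Poincar\'e return map into a piecewise $C^{1+\alpha}$, uniformly expanding, Markov map $f$ of a finite union of intervals, and the return time descends to a piecewise H\"older roof function $\tau$ whose discontinuities lie along the (possibly cuspidal) boundary of the Markov partition. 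Standard thermodynamic formalism for suspensions (Subsection~\ref{subsec:termo}) then identifies, for an arbitrary H\"older potential $\phi$, the equilibrium state $\mu_\phi$ of the flow with the measure on the suspension built from $\tau$-weighted arc-length in the flow direction together with a Gibbs measure $\nu$ on the interval for a piecewise H\"older potential determined by $\phi$ and $\tau$; moreover the exponential decay of correlations for $(\Phi_t,\mu_\phi)$ is equivalent to that of the quotiented suspension semiflow for the corresponding measure, since H\"older observables can be approximated at an exponential rate by functions constant on strong–stable manifolds and the semiconjugacy is bounded-to-one.

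\textbf{Step 2: translating non-integrability into a cohomological statement.} The geometric input I would invoke, developed in Section~\ref{geom}, is that the stable and unstable subbundles of $\Lambda$ are jointly integrable exactly when the roof function $\tau$ is cohomologous, within the class of piecewise H\"older functions over $\Sigma$, to a piecewise constant function (equivalently, when the temporal-distortion cocycle differs from a locally constant function by a coboundary), following the circle of ideas around Plante's proposition and Parry–Pollicott. Hence the hypothesis that the foliations are \emph{not} jointly integrable says precisely that $\tau$ is \emph{not} cohomologous to a piecewise constant function.

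\textbf{Step 3: transversality and conclusion.} By Proposition~\ref{transv2}, a roof function that is not cohomologous to a piecewise constant one satisfies the transversality condition, phrased in terms of the twisted transfer operators, for every H\"older potential. Therefore Theorem~\ref{thm:A} applies to the suspension semiflow over $f$ with roof $\tau$ and yields exponential decay of correlations with respect to $\nu$ — indeed with respect to every Gibbs measure of a piecewise H\"older potential. Transporting this conclusion back through the reduction of Step~1 gives exponential decay of correlations for $(\Phi_t)_{t\in\RR}$ on $\Lambda$ with respect to $\mu_\phi$, and since $\phi$ ranges over all H\"older potentials this covers every equilibrium state of the flow on $\Lambda$.

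\textbf{Where the difficulty lies.} The substance of the proof is already packaged inside Theorem~\ref{thm:A} and its supporting Sections~\ref{transfer}–\ref{sec:flows-decay-transversal}: the Lasota–Yorke inequality for twisted transfer operators and, above all, the Dolgopyat-type cancellation estimate for Gibbs measures that need not enjoy the Federer property, recovered here via a mild Federer property on sub-cylinders and the construction of adapted non-homogeneous partitions with elements of comparable size. Within the reduction itself, the points requiring genuine care are: (i) checking that the quotiented interval map is truly piecewise $C^{1+\alpha}$ and uniformly expanding, which is exactly where the $C^{1+\alpha}$ regularity of the strong stable foliation and codimension-one bunching are used; (ii) controlling the regularity of $\tau$ across the boundary of the Markov partition; and (iii) making the equivalence of Step~2 precise at the level of the symbolic model rather than merely infinitesimally. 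I expect (i) together with the cancellation lemma to be the main obstacle.
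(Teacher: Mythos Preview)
Your proposal is correct and follows essentially the same route as the paper: the authors state explicitly that Theorem~\ref{mainthm:hyp} ``will follow as a standard consequence of Theorem~\ref{thm:A}'' via the reduction to suspension semiflows over hyperbolic skew-products (with references to \cite{AM,AGY,DV} for the details), and your three steps---quotient along the $C^{1+\alpha}$ stable foliation to a piecewise expanding interval map, identify joint non-integrability with the roof not being cohomologous to a piecewise constant function, then invoke the dichotomy---are precisely this reduction spelled out. One small redundancy: in Step~3 you need not invoke Proposition~\ref{transv2} separately, since the dichotomy in Theorem~\ref{thm:A} already absorbs that equivalence; and your Step~2 implication (roof cohomologous to piecewise constant $\Rightarrow$ jointly integrable) is indeed the direction required, obtained in the paper by lifting the invariant graph subspace $\mathcal{E}(x)$ from the proof of (iv)$\Rightarrow$(i) in Lemma~\ref{transv2} back to an invariant $C^1$ codimension-one distribution on $\Lambda$.
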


Let us make some comments on the assumptions. First, joint nonintegrability is a $C^1$-open and dense condition on the flow (see e.g. \cite{FMT}).  
Second, it is worth mentioning that the regularity of the strong stable 
foliation can be obtained through a strong domination condition, which 
defines a $C^{1+\alpha}$-open condition on the space of vector fields identical to \cite[Theorem~1]{ABV}. 
In particular, combining Theorem~\ref{mainthm:hyp} with the results in \cite{ABV} we obtain the following:

\begin{maincorollary}\label{thm:main-attractors}
\emph{For each $d$-dimensional Riemannian manifold $N$ ($d\geq 3$) 
there exists a $C^{1}$-open subset of
  $C^3$-vector fields $\mathcal U \subset \mathfrak X^3(N)$ such that for
  each $X\in\mathcal U$ the associated flow is Axiom~A and exhibits
  a non-trivial codimension one attractor which mixes exponentially with
  respect to every equilibrium state}.
\end{maincorollary}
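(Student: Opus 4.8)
The plan is to realize $\mathcal U$ as the intersection of two $C^1$-open families of $C^3$ vector fields on $N$: those whose flow is Axiom~A, carries a non-trivial codimension one attractor, and has a $C^{1+\alpha}$ strong stable foliation; and those whose stable and unstable foliations are not jointly integrable. Once this intersection is shown to be non-empty (it is automatically $C^1$-open), every $X\in\mathcal U$ satisfies all the hypotheses of Theorem~\ref{mainthm:hyp}, so its flow mixes exponentially with respect to every equilibrium state of the attractor, which is precisely the assertion. Thus the corollary is a bookkeeping exercise on which conditions are $C^1$-open and which are $C^1$-dense.

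First I would invoke the construction behind \cite[Theorem~1]{ABV}. On $\mathbb{R}^d$ there is an explicit $C^\infty$ vector field whose flow is Axiom~A and possesses a non-trivial attractor $\Lambda_0$ with one-dimensional unstable bundle (for instance a solenoidal model, or a suspension of a Plykin-type expanding attractor), designed so that the contraction along $E^s$ strongly dominates the expansion along $E^u$. Since $d\geq 3$, this field glues into an arbitrary ambient vector field, producing $X_0\in\mathfrak X^\infty(N)\subset\mathfrak X^3(N)$ exhibiting the same local dynamics near a codimension one attractor $\Lambda_0\subset N$. By structural stability of hyperbolic attractors (a $C^1$-open phenomenon) every $C^1$-nearby $C^3$ field is still Axiom~A with a non-trivial codimension one attractor; and by \cite[Theorem~1]{ABV} the strong domination inequality — an open condition on the derivatives of the field — forces the strong stable foliation of the attractor to be $C^{1+\alpha}$ and persists. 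Hence there is a $C^1$-open neighbourhood $\mathcal V_0\subset\mathfrak X^3(N)$ of $X_0$ on which the first three properties hold.

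The constant-roof model $X_0$ has its stable and unstable subbundles jointly integrable — they are tangent to the time slices — so $X_0$ itself is not yet in $\mathcal U$. To remedy this I would use that joint nonintegrability is a $C^1$-open and dense condition (see \cite{FMT}): the $C^1$-open set $\mathcal V_0$ therefore contains a vector field whose stable and unstable foliations are not jointly integrable, and, since smooth vector fields are $C^1$-dense and joint nonintegrability is $C^1$-open, one may pick such a field $X_1$ inside $\mathfrak X^3(N)$ (indeed in $\mathfrak X^\infty(N)$). By openness of all three ingredients, $X_1$ has a $C^1$-neighbourhood $\mathcal U\subset\mathfrak X^3(N)$ on which, simultaneously, the flow is Axiom~A with a non-trivial codimension one attractor, the strong stable bundle is $C^{1+\alpha}$, and the stable and unstable foliations are not jointly integrable. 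Applying Theorem~\ref{mainthm:hyp} to each $X\in\mathcal U$ then finishes the proof.

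The main obstacle is the input of the first step: producing a genuinely non-trivial, proper codimension one Axiom~A attractor for a $C^3$ flow whose strong stable foliation is $C^{1+\alpha}$ in a $C^1$-robust way. This is exactly what the strong domination hypothesis of \cite[Theorem~1]{ABV} is built to guarantee, so once that statement is quoted the rest is routine. A secondary point worth an explicit line is that the perturbation breaking joint integrability can be carried out inside the class of $C^3$ (indeed $C^\infty$) vector fields without destroying hyperbolicity of the attractor or $C^{1+\alpha}$-regularity of the stable bundle — which follows from density of smooth fields together with the openness of those two conditions.
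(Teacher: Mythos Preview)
Your proposal is correct and follows essentially the same approach as the paper: the paper derives the corollary in one sentence by combining Theorem~\ref{mainthm:hyp} with \cite[Theorem~1]{ABV}, after noting (just before the statement) that joint nonintegrability is $C^1$-open and dense \cite{FMT} and that the $C^{1+\alpha}$-regularity of the strong stable foliation follows from a strong domination condition which is $C^1$-open. Your write-up simply unpacks this citation into the two $C^1$-open families and their non-empty intersection, which is exactly the intended argument.
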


Second, in the special context of 
three-dimensional transitive Anosov flows, Plante~\cite{Plante} proved that topological mixing is equivalent to the joint nonintegrability (in general the joint nonintegrability is known to imply on topological mixing). It is natural to ask whether Plante's results extends to more general attractors: 

\medskip
\noindent {\bf Problem:} \emph{Given a three-dimensional topologically mixing Axiom A attractor whose strong stable foliation is smooth, are the strong stable and strong unstable foliations jointly non-integrable?}
\medskip

A positive answer to the previous question will validate Conjecture~2 for the class of strongly dissipative three-dimensional Axiom A attractors.

At this point it is also natural to ask whether a similar statement 
to the previous theorem can be obtained for Anosov flows. Indeed, while it is not hard to extend our results to flows whose action of the derivative on
the unstable bundle is conformal. In particular we can deduce that  $C^{2+\alpha}$-Anosov flows
 so that $\dim E^s=1$, $\dim E^u \geqslant 2$ and $D\Phi_t\mid_E^u$ is conformal and whose stable and unstable bundles are not jointly 
 integrable have exponential decay of correlations with respect to every equilibrium state. However, the set of 
 $C^1$-Anosov flows whose action on the expanding bundle is conformal has infinite codimension in the space of $C^1$-Anosov flows.  

Finally, we observe that Theorem~\ref{mainthm:hyp} and its proof provide a number of interesting consequences. We include two of them, one ergodic and one other providing geometric information. The first one concerns the 
ergodic properties of the equilibrium states. It is known that the Almost Sure 
Invariance Principle (ASIP) holds for not necessarily mixing hyperbolic basic sets of a flow \cite{DP} and to 
time-1 maps of hyperbolic and Lorenz attractors with superpolynomial mixing (cf.~\cite[Theorem~1]{MT} and \cite{AMV}). In particular, we obtain the following 
immediate consequence:

\begin{maincorollary}\label{thm:ASIP}
\emph{
Let $\mathcal U \subset \mathfrak X^3(N)$ be the $C^{1}$-open subset of $C^3$-vector fields  given by Corollary~\ref{thm:main-attractors} and 
$\Lambda_X$ denote the exponentially mixing codimension one attractor.  Let $\mu_X$ be an equilibrium state associated to a H\"older continuous potential
on $\Lambda_X$.
For each $C^\infty$ observable $\psi: N \to \mathbb R$ so that $\int \psi\, d\mu_X=0$ the ASIP holds for the time-$1$ map $\Phi_1^X$ of the flow $(\Phi^X_t)_t$: 
passing to an enriched probability space, there exists a sequence $Y_0,Y_1, Y_2, \ldots$ of i.i.d. normal random variables with mean zero and variance
 $
 \sigma_X^2:=\lim_{n\to\infty} \frac1n \int_{\Lambda_X} \Big( \sum_{j=0}^{n-1}  \psi \circ \Phi_j^X\Big)^2 \, d\mu_X
 $  
such that, for any $\delta>0$,
\[
\sum_{j=0}^{n-1}\psi \circ \Phi_j^X=\sum_{j=0}^{n-1} X_j+O(n^{1/4+\delta}),\quad a.e.
\]}
\end{maincorollary}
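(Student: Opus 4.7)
The plan is to obtain this corollary as an essentially immediate consequence of combining the exponential decay of correlations provided by Theorem~\ref{mainthm:hyp} (through Corollary~\ref{thm:main-attractors}) with the abstract ASIP criterion of Melbourne and T\"or\"ok \cite{MT}, so there is little new work to do beyond verifying that the hypotheses of \cite[Theorem~1]{MT} are met in our setting.

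First, I would fix $X \in \mathcal U$ and invoke Corollary~\ref{thm:main-attractors} to conclude that the flow $(\Phi_t^X)_t$ has exponential decay of correlations on $\Lambda_X$ against the equilibrium state $\mu_X$, for every pair of H\"older continuous observables. Since $\psi \in C^\infty(N)$ is H\"older of every exponent, this implies in particular that the sequence of correlations $k \mapsto C_{\mu_X,\psi,\psi}(k)$ decays exponentially, hence is absolutely summable. A Green--Kubo type computation then shows that the limit
\[
\sigma_X^2 = \int \psi^2\, d\mu_X + 2\sum_{k\geq 1}\int \psi\,(\psi\circ \Phi_k^X)\, d\mu_X
\]
exists and coincides with the variance given in the statement.

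Second, exponential (in particular superpolynomial) decay of correlations for the flow with respect to an equilibrium state on a hyperbolic attractor is precisely the input required by \cite[Theorem~1]{MT} to deduce the ASIP for the corresponding time-$1$ map acting on sufficiently regular observables. Applying that theorem to the map $\Phi_1^X$ with observable $\psi$ yields, on an enriched probability space, the sequence of i.i.d.\ centered Gaussians $Y_0, Y_1, \ldots$ with variance $\sigma_X^2$ together with the approximation with error $O(n^{1/4+\delta})$ stated in the corollary.

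The only subtlety, and the closest thing to an obstacle here, is that \cite{MT} is phrased primarily for SRB measures on hyperbolic (and Lorenz) attractors, whereas we need it for arbitrary equilibrium states $\mu_\phi$ associated to H\"older potentials. However, the proof of \cite[Theorem~1]{MT} uses only superpolynomial decay of correlations for H\"older observables and the availability of a symbolic model with exponential tails, and both ingredients have been established in the previous sections for all Gibbs measures on the codimension one attractors $\Lambda_X$; in particular the same argument applies verbatim, and the alternative reference \cite{AMV} provides an equally applicable statement. This completes the plan.
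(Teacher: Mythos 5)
Your proposal is correct and follows exactly the route the paper intends: the corollary is presented there as an immediate consequence of the superpolynomial (indeed exponential) mixing from Corollary~\ref{thm:main-attractors} combined with the ASIP criterion of \cite[Theorem~1]{MT} (and \cite{AMV}) for time-1 maps. Your remark about checking that the Melbourne--T\"or\"ok argument applies to general Gibbs measures, not just SRB, is precisely the point the paper glosses over, and your resolution matches the paper's implicit reasoning.
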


The second application concerns the distribution of periodic orbits of prime period and the dynamical zeta function. 
If $\Lambda\subset N$ is a hyperbolic attractor denote by $\text{co}(\Lambda)$ the set of primitive closed orbits contained in $\Lambda$. 
Moreover, if $\gamma \in \text{co}(\Lambda)$ let $\ell(\gamma)>0$ denotes its prime period.
Pollicott and Sharp~\cite{PS}
considered the dynamical zeta function 
$$
\zeta(s)=\prod_{\gamma\in \text{co}(\Lambda)}\, (1-e^{-s\ell(\gamma)})^{-1}
$$
and used contraction of twisted transfer operators (as the one proved in Section~\ref{sec:flows-decay-transversal}) to obtain sharp formulas for the asymptotic growth of the cardinality of closed orbits. Hence, as a consequence of \cite{PS} we obtain the following:

\begin{maincorollary}\label{thm:zeta}
\emph{
Let $\mathcal U \subset \mathfrak X^3(N)$ be the $C^{1}$-open subset of $C^3$-vector fields,
$\Lambda_X$ be the codimension one Axiom A attractor given by Corollary~\ref{thm:main-attractors}, and $h_X$ denote its topological entropy. 
There exists $0<c_0<h_X$ so that the zeta function $\zeta$ has analytic continuation in $\mathfrak R(s)>c_0$, except for a simple pole at $s=h_X$. Moreover, there exists $0<c<h_X$ so that 
$$
\# \Big\{ \gamma \in \text{co}(\Lambda_X) \colon \ell(\gamma) \leqslant T \Big\} = \text{li}\,(e^{h_X \,T}) + O(e^{c\, T})
	\quad\text{as} \quad T\to+\infty,
$$
where $\text{li}\,(x)=\int_2^x \frac1{\log u}\, du\sim \frac{\log x}x$ as $x\to+\infty$.} 
\end{maincorollary}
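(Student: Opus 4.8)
The plan is to derive the statement from the analytic framework of Pollicott and Sharp \cite{PS}, whose only substantial input is the contraction of twisted transfer operators established in Section~\ref{sec:flows-decay-transversal}. Using the symbolic dynamics and the reduction of $(\Phi_t^X)_t|_{\Lambda_X}$ to a suspension semiflow over a $C^{1+\alpha}$ piecewise expanding Markov interval map $T\colon J\to J$ with roof function $r$ (Section~\ref{geom}), the closed orbits in $\text{co}(\Lambda_X)$ correspond --- essentially bijectively, the discrepancy coming from orbits through the Markov-partition boundary, whose growth rate is strictly smaller --- to the periodic orbits of $T$ counted with $r$-period: if $T^n x=x$ then $r_n(x):=\sum_{j=0}^{n-1}r(T^j x)$ is the length of the associated closed orbit. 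Hence
\[
\log\zeta(s)=\sum_{\gamma\in\text{co}(\Lambda_X)}\sum_{k\geqslant 1}\frac{e^{-sk\ell(\gamma)}}{k}=\sum_{n\geqslant 1}\frac1n\sum_{T^nx=x}e^{-s\,r_n(x)}+\Psi(s),
\]
with $\Psi$ holomorphic and bounded on $\mathfrak R(s)>c_0$, and the main sum is a regularized trace of the $n$-th iterate of the twisted transfer operator $(\mathcal{L}_{-sr}w)(x)=\sum_{Ty=x}e^{-s\,r(y)}w(y)$ on the bounded-variation (or anisotropic) space of Sections~\ref{transfer}--\ref{sec:flows-decay-transversal}. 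As in \cite{PS}, this identifies $\zeta$, up to a non-vanishing holomorphic factor, with $\det(I-\mathcal{L}_{-sr})^{-1}$, so its poles in $\mathfrak R(s)>c_0$ are exactly the $s$ with $1\in\mathrm{spec}(\mathcal{L}_{-sr})$.

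Next I would locate the leading pole. By the Abramov formula, $P_T(-h_X r)=0$, where $h_X=h_{\mathrm{top}}(\Phi^X|_{\Lambda_X})$ is the topological entropy of the attractor, and $h_X$ is the unique real zero of the strictly decreasing map $s\mapsto P_T(-sr)$; hence, by the Lasota--Yorke inequality of Section~\ref{transfer} and quasi-compactness, $\mathcal{L}_{-h_X r}$ has a simple leading eigenvalue $1$ and the rest of its spectrum inside a disk of radius $\rho_0<1$. Differentiating $s\mapsto e^{P_T(-sr)}$ at $s=h_X$ shows $\zeta$ has a simple pole at $s=h_X$ (with non-zero residue); since then $-\zeta'/\zeta$ has a simple pole there of residue $1$, this is responsible for the $\text{li}(e^{h_X T})$ main term.

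The analytic core is the extension of $\zeta$ to a strip $c_0<\mathfrak R(s)\leqslant h_X$ with no further pole and polynomial growth on vertical lines. For $\mathfrak R(s)$ near $h_X$ and $|\mathfrak I(s)|$ bounded, continuity of the spectrum keeps the leading eigenvalue of $\mathcal{L}_{-sr}$ near $1$ but distinct from it for $s\ne h_X$, and the rest of the spectrum inside the unit disk; to exclude eigenvalues of modulus $1$ off the real axis one uses that $\Lambda_X$ is topologically mixing, which is exactly where the joint non-integrability hypothesis of Theorem~\ref{mainthm:hyp} enters --- equivalently, by Proposition~\ref{transv2}, that $r$ is \emph{not} cohomologous to a piecewise constant roof. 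For $|\mathfrak I(s)|\geqslant t_0$ I would invoke the contraction from Section~\ref{sec:flows-decay-transversal}, $\|\mathcal{L}_{-sr}^{\,n}\|\leqslant C\,|\mathfrak I(s)|^{\alpha}\,\rho^{\,n}$ with $\rho<1$ uniform for $\mathfrak R(s)$ in a neighbourhood of $h_X$, which gives simultaneously $1\notin\mathrm{spec}(\mathcal{L}_{-sr})$ and the resolvent bound $\|(I-\mathcal{L}_{-sr})^{-1}\|=O(|\mathfrak I(s)|^{\alpha})$. Combining the two regimes produces a meromorphic extension of $\zeta$ to a half-plane $\mathfrak R(s)>c_0$ with $c_0<h_X$, analytic and non-vanishing except for the simple pole at $h_X$, with polynomial growth on vertical lines. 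Feeding this package into the contour-integration/Tauberian argument of \cite{PS} (an Ikehara-type theorem with exponential error term applied to $\zeta'/\zeta$) then gives $\#\{\gamma\in\text{co}(\Lambda_X):\ell(\gamma)\leqslant T\}=\text{li}(e^{h_X T})+O(e^{cT})$ for some $0<c<h_X$.

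The main obstacle is not the contour argument but checking that the estimates in hand are precisely of the form \cite{PS} requires. Because $T$ is piecewise expanding but in general \emph{not} full branch, the roof and potential are only piecewise H\"older, and the relevant Gibbs measures lack the full Federer property, two points need care: (i) making the regularized-trace/determinant identity rigorous --- or bypassing it via finite-rank approximation --- on the bounded-variation/anisotropic space on which the weak-UNI and adapted-partition machinery of Section~\ref{sec:UNI-transversal} operates, while controlling the contribution of the (possibly cuspidal) Markov-partition boundary; and (ii) patching the large-frequency contraction of Section~\ref{sec:flows-decay-transversal} with the bounded-frequency spectral analysis above into a genuinely uniform resolvent bound along a whole vertical line $\mathfrak R(s)=\sigma<h_X$. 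Both are governed by the same reductions used to prove Theorem~\ref{mainthm:hyp}, but must be recast in the normalization of \cite{PS}.
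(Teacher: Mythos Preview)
Your proposal is correct and follows the same route as the paper, which states the corollary simply ``as a consequence of \cite{PS}'' once the Dolgopyat-type contraction of twisted transfer operators (Theorem~\ref{mainthm0}) is in hand; you have just unpacked what the Pollicott--Sharp machinery does. One remark: the obstacles you raise at the end are somewhat overstated, because \cite{PS} is formulated at the symbolic level---the transfer operators act on H\"older functions over the subshift of finite type furnished by Theorem~\ref{thm:BR75}, where the trace/determinant identities are clean and Markov-boundary issues disappear, and the $\|\cdot\|_{(b)}$-contraction of Section~\ref{sec:flows-decay-transversal} transfers directly to that setting via the standard semiconjugacy; no bounded-variation or anisotropic spaces are needed.
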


\medskip
\subsection*{Comments and ingredients in the proofs}
The main results of the paper are actually a consequence of an analogous statement for suspension semiflows.
Indeed, as hyperbolic attractors have finite Markov partitions, these are semiconjugate to suspension semiflows over piecewise expanding maps 
(cf. Subsection~\ref{sec:hyp}). 
Recall that, given a $C^{1+\al}$-piecewise expanding Markov map $T: J \to J$, we say that a (not necessarily invariant) probability measure $\nu$ is a \textit{Gibbs measure} with respect to a piecewise H\"older continuous potential $\phi:J\to\mathbb{R}$ if there are constants $C_5>0\;\text{and}\; \, P\in\mathbb{R}$ such that
\begin{eqnarray}\label{eq:Gibbs}
C_5^{-1}\leqslant \dfrac{\nu(\cP^{(n)}(x))}{\exp(-Pn+S_n\phi(y))}\leqslant C_5
\end{eqnarray}
for all $x\in J$, $y\in \cP^{(n)}(x)$ and $n\geqslant 1$. Here, as usual, $\cP^{(n)}(x)$ stands for the element of the partition  $\cP^{(n)}=\bigvee_{j=0}^{n-1} T^{-j}(\cP)$ that contains the point $x$, and $S_n\phi=\sum^{n-1}_{j=0}\phi \circ T^j$. 
Theorem~\ref{mainthm:hyp} will follow as a standard consequence of
the following dichotomy:

\begin{maintheorem}\label{thm:A}
\emph{Suppose that $T: I\to I$ is a $C^{1+\alpha}$ piecewise expanding and Markov interval map 
and that $X_t: J^r\to J^r$ is a suspension semiflow over $T$ with piecewise $C^{1+\alpha}$ roof function $r$. Then either:
\begin{enumerate}
\item $r$ is cohomologous to a piecewise constant function, or 
\item for every $T$-invariant Gibbs measure $\mu$ there exist $C,c>0$ (depending on $\mu$) such that  for any smooth observables 
$v,w\in {C}^1(J^r,\mathbb{R})$
\begin{eqnarray*}
|C_{\mu,v,w}(t)|\le Ce^{-ct}\|v\|_{{C}^1}\|w\|_{{C}^1}, \qquad \forall t>0.
\end{eqnarray*}
\end{enumerate}
Furthermore, item (2) holds for a $C^1$-generic subset of roof functions $r: J \to \mathbb R_+$.}
\end{maintheorem}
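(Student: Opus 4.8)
The plan is to reduce the dichotomy to a Dolgopyat-type spectral estimate for the twisted transfer operators of the base map $T$, the two alternatives corresponding to degeneracy of the roof function versus a nonintegrability condition that drives cancellations. First I would invoke Proposition~\ref{transv2}: either $r$ is cohomologous to a piecewise constant function, which is alternative (1), or $r$ satisfies the transversality condition expressed through the twisted transfer operators of Section~\ref{transfer}. From here on I assume the second case. Using Proposition~\ref{prop:transv-implies-wUNI} I would upgrade the merely pointwise transversality to the weak uniform nonintegrability condition of Definition~\ref{def:UNI-weak}. As the introduction stresses, the delicate point is that transversality only yields cancellation of the oscillatory phase $e^{-ib S_n r}$ at individual points, whereas one needs it on domains of $\mu$-comparable size; this is where one-dimensionality of the unstable direction is essential, via the elementary fact that a ball intersected with the Markov subinterval containing its center still contains a ball of half the radius, together with the mean value theorem behind the cancellation Lemma~\ref{lemm:cancel-N}.

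Next, fixing a $T$-invariant Gibbs measure $\mu$ for a piecewise H\"older potential — normalised so that the untwisted transfer operator has leading eigenvalue $1$ — I would analyse the twisted operators $\mathcal L_s$, $s=a+ib$, on a space of piecewise Lipschitz functions with the $|b|$-adapted norm $\|h\|=\max\{|h|_\infty,\,|b|^{-1}\mathrm{Lip}(h)\}$. Combining the Lasota--Yorke inequality of Section~\ref{transfer} with the cancellations furnished by the weak UNI condition, the target is the Dolgopyat estimate
\[
\|\mathcal L_s^{\,n}h\|\ \le\ C\,\rho^{\,n}\,\|h\|\qquad (n\ge 0),
\]
valid with $C>0$ and $\rho\in(0,1)$ independent of $b$, provided $a$ is close to $0$ and $|b|$ is large. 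The main obstacle is to run this without the Federer property, which fails for Gibbs measures of piecewise H\"older potentials: following Subsection~\ref{subsec:addapted-partititons}, I would replace the homogeneous partitions of the classical argument by non-homogeneous adapted partitions whose elements have $\mu$-comparable size — available because the Gibbs property~\eqref{eq:Gibbs} gives uniform bounds for nested cylinders — and carry out the $L^2(\mu)$ bookkeeping using only the mild Federer property on sub-cylinders of the Markov partition.

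Then I would pass from this contraction to decay of correlations in the usual way. Coding the suspension semiflow by $T$, the Laplace transform $\widehat C_{\mu,v,w}(s)=\int_0^{\infty}e^{-st}C_{\mu,v,w}(t)\,dt$ is expressed, for $\Re(s)$ large, through the resolvent $(\mathrm{Id}-\mathcal L_s)^{-1}$ applied to symbols built from $v$ and $w$ (decomposing $w$ along the flow direction reduces everything to observables on cross-sections with controlled $C^1$ norm). Quasi-compactness of $\mathcal L_s$ for $|b|$ bounded, together with the Dolgopyat estimate above for $|b|$ large, then shows that $\widehat C_{\mu,v,w}$ extends meromorphically to a strip $\{\Re(s)>-c\}$ with at most a simple pole at $s=0$, of residue $\int v\,d\mu\int w\,d\mu$, and with polynomial growth in $|\Im s|$; subtracting this pole and shifting the contour in the inverse Laplace transform yields $|C_{\mu,v,w}(t)|\le Ce^{-ct}\|v\|_{C^1}\|w\|_{C^1}$, first for a dense class of observables adapted to the symbolic model and then in general by approximation.

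Finally, for the genericity clause I would show that alternative (1) is non-generic. With the Markov partition fixed, the piecewise constant functions span a finite-dimensional space $V$, and the set $\mathcal B$ of roof functions cohomologous to an element of $V$ equals $\{\text{coboundaries}\}+V$. Since $T$ is topologically transitive, the Liv\v{s}ic theorem shows that the coboundaries form a closed subspace of the piecewise $C^{1+\alpha}$ functions, so $\mathcal B$ is closed; and $\mathcal B$ has empty interior, because $r$ can be perturbed in $C^{1+\alpha}$ near a point of a chosen periodic orbit so as to change $S_n r$ along that orbit while leaving the other periodic sums unchanged. Hence $\mathcal B$ is nowhere dense, its complement is residual, and alternative (2) holds there by the previous steps. (Alternatively, one may invoke directly the $C^1$-open-and-dense joint nonintegrability of the hyperbolic bundles from~\cite{FMT}.)
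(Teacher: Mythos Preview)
Your proposal is correct and follows essentially the same route as the paper: the dichotomy via Proposition~\ref{transv2}, the upgrade to the weak UNI condition via Proposition~\ref{prop:transv-implies-wUNI} and Lemma~\ref{lemm:cancel-N}, the Lasota--Yorke inequality combined with the $L^2(\mu)$ cancellation argument run over the non-homogeneous adapted partitions of Subsection~\ref{subsec:addapted-partititons}, and finally the standard Laplace-transform reduction (which the paper delegates to \cite[Subsection~2.3]{DV}). Your genericity argument is also along the expected lines; the paper does not spell this step out, but your Liv\v{s}ic-plus-periodic-perturbation reasoning is the natural one.
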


Indeed, Theorem~\ref{mainthm:hyp} will follow as a standard consequence of Theorem~\ref{thm:A} (see e.g. \cite{AM,AGY,DV} for the complete reduction via  suspension semiflows over hyperbolic skew-products). 
Theorem~\ref{thm:A} improves \cite[Theorem~B]{ABV}, where the previous statement addresses only the case of SRB measures, and \cite[Corollary~3]{DV}, 
which had the requirement that the piecewise expanding interval map was full branch. The core mechanism used in the proof of this theorem is the 
so-called transversality, considered by Butterley and War \cite{BW} and expressed in terms of transfer operators associated to the geometric potential.  

As mentioned above, there are two instances where the SRB measure played a key role in previous proofs of exponential decay of correlations. Firstly, the SRB measures satisfy
the Federer property, which ensures that cancellations of the twisted transfer operator obtained at a certain fixed scale propagate to the entire phase space. 
Gibbs measures  for the quotient dynamics $T$ and piecewise H\"older potentials may not have the Federer property but turn out to satisfy a sufficient 
condition over non-homogeneous partitions on the interval (see Subsection~\ref{subsec:addapted-partititons} for the precise description).
Secondly, if the piecewise expanding interval map $T$ is not full branch and the roof function is not cohomologous to a constant then 
the transfer operators satisfy a transversality condition (obtained in Lemma~\ref{transv2} by routine modification from a similar notion in \cite{BW}),
but this property seems not sufficient to produce cancellations on oscillatory integrals, and ultimately the contraction of twisted transfer operators.
Indeed, this kind of van der Corput lemma (see Lemma 3.14 in \cite{BW}) uses strongly that the SRB measure is absolutely continuous with 
respect to the Lebesgue measure. 
In order to overcome this fact, we modify the argument concerning exponential decay of correlations for suspension flows in \cite{AM,BV,AGY} 
replacing the uniform non-integrability condition (expressed for full branch piecewise expanding maps) by a pointwise version of the latter obtained as a consequence of transversality (see Section~\ref{sec:UNI-transversal}).

\section{Geometric aspects of hyperbolic and suspension flows}\label{geom}

\subsection{Hyperbolic basic sets}\label{sec:hyp}
In this subsection we recall some of the geometric ingredients of hyperbolic flows, namely its invariant manifolds, and consequences.
In particular we recall the existence of Markov partitions, semiconjugacy to suspension flows over subshifts of finite type and application to the thermodynamic formalism of hyperbolic flows.

\subsubsection{Basic concepts}

Let $N$ be a compact differentiable manifold and let $(\Phi_t)_{t\in \mathbb R}$ be a $C^1$-smooth flow on $N$. A compact and 
$\Phi_t$-invariant subset $\Lambda\subset N$  is called \textit{uniformly hyperbolic} if there exists a $D\Phi_t$-invariant splitting $T_\Lambda N=E^s\oplus <X>\oplus E^u$
(here $<X>$ denotes the one-dimensional subbundle tangent to the flow direction) and there are constants $c,\lambda>0$ such that:
$\|D\Phi_t(x)v\|\leqslant ce^{-\lambda t}\|v\|$ for every $v\in E_x^s,\,t\geqslant 0$, and
$\|D\Phi_{-t}(x)v\|\leqslant ce^{-\lambda t}\|v\|$ for every $v\in E_x^u,\,t\geqslant 0.$
In the special case that $\Lambda=\{p\}$ is a fixed point then the hyperbolic splitting can be read simply as $T_\Lambda N=E^s\oplus E^u$. 
The flow $\Phi_t:N\to N$ is called an \textit{Anosov flow} if $N$ is a hyperbolic set.

A hyperbolic set $\Lambda$ is a \textit{basic set} if it is transitive, the periodic orbits of $\Phi_t|_{\Lambda}$ are dense in $\Lambda$
and there exists an open set $U\supseteq \Lambda$ with $\Lambda=\bigcap_{t\in\mathbb{R}}\Phi_t(U)$. 
A basic set $\Lambda$ is an \emph{attractor} if there exists an open set $U\supseteq \Lambda$ so that
 $\Lambda=\bigcap_{t>0}\Phi_t(U)$.
The flow $(\Phi_t)_{t\in \mathbb R}$ is called \textit{Axiom A} if its non-wandering set is a hyperbolic set and coincides with the closure
of the set of critical elements, ie. the set formed by periodic orbits and singularities
(recall that the {non-wandering set} of the flow consists of all points $x\in N$ so that $\Phi_t(V) \cap V\neq \emptyset$ for every open neighborhood $V$ of $x$ and every large $t$). 
The spectral decomposition theorem ensures that the non-wandering set of an Axiom A flow consists of a finite number of disjoint hyperbolic basic sets.

Given a hyperbolic set $\Lambda\subset N$, the stable manifold theorem guarantees that there exists $\vep>0$ so that 
the \emph{local strong stable manifold} and \emph{local strong unstable manifold} at a point $x\in \Lambda$ defined, respectively, by
\begin{eqnarray*}
W^s_{\varepsilon}=\Big\{y\in N: d(\Phi_t(x),\Phi_t(y))\leqslant\varepsilon, \;\; \forall t\geqslant 0\;
\text{and}\, \lim_{t\to+\infty}d(\Phi_t(x),\Phi_t(y))= 0 \Big\}
\end{eqnarray*}
and
\begin{eqnarray*}
W^u_{\varepsilon}=\{y\in N: d(\Phi_{-t}(x),\Phi_{-t}(y))\leqslant\varepsilon\,\;\; \forall t\geqslant 0\;
\text{and}\, \lim_{t\to+\infty}d(\Phi_{-t}(x),\Phi_{-t}(y))= 0 \Big\}
\end{eqnarray*}
are embedded submanifols of $N$ invariant by the flow: for each $x\in \Lambda$ and all $t\geqslant 0$ it holds that $\Phi_t(W_{\varepsilon}^s(x))\subset W_{\varepsilon}^s(\Phi_t(x))$ and $\Phi_{-t}(W_{\varepsilon}^u(x)\subset W_{\varepsilon}^u(\Phi_{-t}(x))$.
The \textit{local center-stable} and \textit{local center-unstable} manifolds at a point $x\in \Lambda$ are defined as $W^{cs}_{\varepsilon}(x):=\bigcup_{|t|\leqslant\varepsilon} \Phi_t(W^s_{\varepsilon}(x))$ and $W^{cu}_{\varepsilon}(x):=\bigcup_{|t|\leqslant\varepsilon} \Phi_{-t}(W^u_{\varepsilon}(x))$, respectively.

\subsubsection{Markov partitions and semiconjugacy to a suspension flow}

In the present subsection we recall the symbolic dynamics of Axiom A basic sets, established by Bowen~\cite{Bo73}  and 
Ratner~\cite{Rat} as a byproduct of the
construction of finite Markov partitions.

A differentiable closed $(d-1)$-dimensional disk $\mathcal{D}\subset N$ is called a local \textit{cross-section} if it is transverse to the flow direction. Also, a set $\mathcal{R}\subset\mathcal{D}\cap\Lambda$ is said a \emph{rectangle} if $W^{cs}_{\varepsilon}(x)\cap W^{cu}_{\varepsilon}(y)\cap\mathcal{R}$ consists of exactly one point for $x,y\in\mathcal{R}$.
A finite collection $\mathcal{M}=\{\mathcal{R}_1,\ldots,\mathcal{R}_n\}$ is called a \emph{proper family of size} $\varepsilon$ if
\begin{enumerate}
\item[(i)] each $\mathcal{R}_i$ is a closed subset of $\Lambda$
\item[(ii)] $\Lambda=\bigcup_{t\in[-\varepsilon,0]}\Phi_t(\bigcup_i\mathcal{R}_i)$, and there are local sections $\mathcal{D}_1,\ldots,\mathcal{D}_n$ 
of diameter less than $\vep$ satisfying:
\begin{enumerate}
\item[$\bullet$] $\mathcal{R}_i\subset \text{int}(\mathcal{D}_i)$ and $\mathcal{R}_i=\overline{\text{int}(\mathcal{R}_i})$,
\item[$\bullet$] if $i\neq j$, at least one of the sets $\mathcal{D}_i\cap\Phi_{[0,\varepsilon]}(\mathcal{D}_j)$ and $\mathcal{D}_j\cap\Phi_{[0,\varepsilon]}(\mathcal{D}_i)$ is empty.
\end{enumerate}
\end{enumerate}

Let $\mathcal{M}=\{\mathcal{R}_1,\ldots,\mathcal{R}_n\}$ be a proper family of size $\varepsilon>0$ and set $\Gamma:=\bigcup^n_{i=1}\mathcal{R}_i$. Item (ii) implies that for any $x\in\Phi_{[-\varepsilon,0]}(\bigcup_i\mathcal{R}_i)$ there exists a first positive time $0\leqslant r(x)\leqslant\varepsilon$ so that $\Phi_{r(x)}(x)\in\Phi_{[-\varepsilon,0]}(\bigcup_i\mathcal{R}_i)$. Let $P$ denote the Poincar\'e return map to $\Gamma$ related to $\Phi_t$, and the return time $r$. These maps are continuous on 
\begin{eqnarray*}
\Gamma'=\Big\{ x\in\Gamma\,:\,P^k(x)\in\bigcup_i\text{int}(\mathcal{R}_i),\,\,\forall k\in\mathbb{Z}\Big\}.
\end{eqnarray*}
Finally, a proper family $\mathcal{M}=\{\mathcal{R}_1,\ldots,\mathcal{R}_n\}$ is called \emph{Markov family} if
\begin{enumerate}
\item[(iii)] $x\in U(\mathcal{R}_i,\mathcal{R}_j):=\overline{\{w\in\Gamma'\,:\,w\in\mathcal{R}_i\, ,\,P(w)\in\mathcal{R}_j\}}$ implies that $\mathcal{D}_i\cap W^{cs}_{\varepsilon}(x)\subset U(\mathcal{R}_i,\mathcal{R}_j)$,
\item[(iv)] $y\in U(\mathcal{R}_k,\mathcal{R}_i):=\overline{\{w\in\Gamma'\,:\,w\in\mathcal{R}_i\, ,\,P^{-1}(w)\in\mathcal{R}_k\}}$ implies that $\mathcal{D}_i\cap W^{cu}_{\varepsilon}(y)\subset U(\mathcal{R}_k,\mathcal{R}_i)$.
\end{enumerate}

Defining a transition matrix $A\in {\mathcal M}_{n\times n}(\{0,1\})$ by $A_{i,j}=1$ if and only if there exists $x\in\Gamma'$ so that $x\in\mathcal{R}_i$ and $P(x)\in\mathcal{R}_j$, in \cite{Bo73} Bowen proved that the flow restricted to the hyperbolic basic set $\Lambda$ is semiconjugated
to a suspension semiflow over the subshift of finite type
 $\sigma_A: \Sigma_A\to\Sigma_A$ defined by $\sigma_A(\overline{x})=(x_{i+1})_{i\in\mathbb{Z}}$, where
$
\Sigma_A=\big\{\overline{x}=(x_i)_{i\in\mathbb{Z}}\in \{1,\ldots,n\}^{\mathbb{Z}}\,:\,A_{x_ix_{i+1}}=1\,\forall i\in\mathbb{Z}\big\}.
$
More precisely:

\begin{theorem}\cite{Bo73}\label{thm:BR75}
Let $\Lambda$ be a hyperbolic basic set for a $C^1$-smooth flow $(\Phi_t)_t$. Then there is an matrix $A,$ a topologically mixing subshift of finite type $\sigma_A: \Sigma_A\to\Sigma_A$, a roof function $r:\Sigma_A \to \mathbb R$ and a continuous surjection $\pi:\Sigma_A^{r}\to \Lambda$ such that $\Phi_t\circ \pi=\pi\circ X_t$.
\end{theorem}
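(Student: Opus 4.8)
The statement is Bowen's symbolic dynamics theorem for hyperbolic flows, so the plan is to reproduce the classical construction of \cite{Bo73} (see also Ratner \cite{Rat}): reduce $(\Phi_t)_t$ restricted to $\Lambda$ to the Poincar\'e return map on a global cross-section, build a finite Markov family of rectangles for this return map, code the return map by a subshift of finite type, and then recover the flow as the suspension of this coding under the return-time function.

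For the reduction step I would fix $\varepsilon>0$ smaller than an expansiveness constant for $\Phi_t|_\Lambda$ and cover $\Lambda$ by finitely many flow boxes $\Phi_{[-\varepsilon,0]}(\mathcal R)$, where each $\mathcal R$ lies in a small local cross-section $\mathcal D$ transverse to the flow. Such rectangles exist because of the local product structure of $\Lambda$: the bracket $[x,y]:=W^{cs}_\varepsilon(x)\cap W^{cu}_\varepsilon(y)\cap\mathcal D$ is well defined and depends continuously on nearby $x,y\in\Lambda$. Shrinking and truncating these rectangles along their stable and unstable boundaries and then disjointifying via the ``one of the two sets is empty'' alternative in (ii), one obtains a proper family $\mathcal M=\{\mathcal R_1,\dots,\mathcal R_n\}$ of size $\varepsilon$ in the sense of (i)--(ii), with associated sections $\mathcal D_i$, Poincar\'e return map $P$ to $\Gamma:=\bigcup_i\mathcal R_i$ and return time $r(\cdot)\in[0,\varepsilon]$; expansiveness also lets one keep the return time bounded below by some $\delta>0$ away from the rectangle boundaries, which is needed so that the suspension does not degenerate.

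The heart of the proof is to refine $\mathcal M$ into a \emph{Markov} family, i.e.\ one also satisfying (iii)--(iv). One cuts each $\mathcal R_i$ along the finitely many stable boundaries of the sets $P^{-1}(\mathcal R_j)$ and the unstable boundaries of the sets $P(\mathcal R_k)$; since $P$ and $P^{-1}$ uniformly contract, respectively, the stable and unstable slices inside the rectangles, finitely many rounds of such cuts yield a family for which $P$ carries $W^{cs}_\varepsilon$-slices into $W^{cs}_\varepsilon$-slices and $P^{-1}$ carries $W^{cu}_\varepsilon$-slices into $W^{cu}_\varepsilon$-slices, which is exactly (iii)--(iv). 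The main obstacle, and the technically delicate part, is to check that the pieces obtained after cutting still form a proper family: each piece must equal the closure of its interior and no piece may collapse to a set with empty relative interior. This requires Bowen's careful description of how the stable/unstable boundaries of rectangles sit inside $\Lambda$ — these boundaries need not be smooth submanifolds, cf.\ \cite{Bo78} — together with a genericity argument on the choice of the initial cross-sections ensuring that the cutting loci meet each rectangle in sets of empty relative interior. Once this is done one defines $A\in\mathcal M_{n\times n}(\{0,1\})$ by $A_{ij}=1$ iff some $x\in\Gamma'$ satisfies $x\in\mathcal R_i$ and $P(x)\in\mathcal R_j$.

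Finally I would code the return map and lift to the flow. For $\bar x=(x_i)_{i\in\mathbb Z}\in\Sigma_A$, the Markov property together with expansiveness forces $\bigcap_{k\ge0}\bigcap_{|i|\le k}\overline{P^{-i}(\mathcal R_{x_i})}$ to be a single point $\theta(\bar x)\in\Gamma$; the map $\theta:\Sigma_A\to\Gamma$ is continuous, surjective, bounded-to-one, and intertwines $\sigma_A$ with $P$ on the residual set $\Gamma'$. Set $r(\bar x):=\inf\{t>0:\Phi_t(\theta(\bar x))\in\Gamma\}\in[\delta,\varepsilon]$, form the suspension $\Sigma_A^r$ with its semiflow $X_t$, and define $\pi:\Sigma_A^r\to\Lambda$ by $\pi(\bar x,s):=\Phi_s(\theta(\bar x))$ for $0\le s\le r(\bar x)$. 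Well-definedness at the identification $(\bar x,r(\bar x))\sim(\sigma_A\bar x,0)$ is precisely the relation $\Phi_{r(\bar x)}(\theta(\bar x))=\theta(\sigma_A\bar x)$, i.e.\ $P\circ\theta=\theta\circ\sigma_A$; continuity of $\pi$ follows from continuity of $\theta$, $r$ and $\Phi$; surjectivity onto $\Lambda$ follows from (ii), since $\Lambda=\bigcup_{t\in[-\varepsilon,0]}\Phi_t(\Gamma)$; and $\Phi_t\circ\pi=\pi\circ X_t$ is immediate from the definition of the suspension semiflow. Transitivity of $\sigma_A$ is inherited from transitivity of $\Lambda$; if the transition matrix $A$ has period $p>1$, one replaces $\sigma_A$ by $\sigma_A^p$ restricted to one of its $p$ cyclically permuted transitive components and $r$ by its $p$-th Birkhoff sum, which leaves the suspension semiflow unchanged up to conjugacy and makes the base subshift topologically mixing.
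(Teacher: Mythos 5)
The paper offers no proof of this statement: it is quoted verbatim from Bowen \cite{Bo73} (see also Ratner \cite{Rat}), and your sketch faithfully reproduces that classical construction — proper families of rectangles from the local product structure, refinement to a Markov family, symbolic coding of the Poincar\'e map, suspension under the return time, and the passage to a power of the shift to achieve topological mixing. This is essentially the same approach the paper relies on by reference, so there is nothing to correct.
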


\subsubsection{Thermodynamic formalism}\label{subsec:termo}

Let us recall some preliminaries from the thermodynamic formalism of hyperbolic flows. Assume that $\Lambda\subset N$ is a hyperbolic basic set for the flow 
$(\Phi_t)_t$, that $\{\mathcal{R}_i\}_{1\leqslant i \leqslant n}$ is a Markov family and set $\Gamma:=\bigcup^n_{i=1}\mathcal{R}_i$.
Given $\varepsilon>0$ and $T>0$, a subset $E\subset \Lambda$ is called $(T,\vep)-$\emph{separated} if given $x,y\in E$ there exist $t\in [0,T]$ so that $d(\Phi_t(x),\Phi_t(y))\geqslant \varepsilon$.
Given a continuous function $\phi:\Lambda\to\mathbb{R}$, its \emph{topological pressure} is defined by
\begin{eqnarray*}
P_\Lambda((\Phi_t)_t,\phi)=\lim_{\varepsilon\to 0}\limsup_{T\to\infty}\frac{1}{T}\log Z_T(\Phi_t, \phi, \varepsilon),
\end{eqnarray*}
where
$
Z_T((\Phi_t)_t,\phi, \varepsilon)=\sup\Big\{\sum_{x\in E}e^{\int^T_0 \phi(\Phi_tx)dt}:\,E\,\, \text{is a}\,\, (T,\vep)\text{-separated set}\Big\}.
$
By uniform continuity, the pressure function $P_\Lambda((\Phi_t)_t,\phi)$ coincides with the pressure function of $\phi$ with respect to the time-1 map $\Phi_1$. 
Moreover, using the variational principle for continuous maps on a compact metric spaces (see e.g. \cite{BR75}) one has that 
\begin{eqnarray*}
P_\Lambda((\Phi_t)_t,\phi) 
	= P_\Lambda(\Phi_1,\phi)
	=\sup\left\{h_{\nu}(\Phi_1)+\int\phi\, d\nu : \nu\in\mathcal{M}(\Phi_1) \right\},
\end{eqnarray*}
where the set $\mathcal{M}(\Phi_1)$ denotes the space of $\Phi_1$-invariant probabilities.
If, in addition, the potential $\phi: \Lambda \to \mathbb R$ is H\"older continuous then it induces a piecewise H\"older continuous potential
$$
\bar \phi: \Lambda \cap \Gamma \to\mathbb R
	\quad\text{given by} \quad 
	\bar \phi(x)=\int_0^{r(x)} \phi( \Phi_s(x) )\, ds
$$
on $\Lambda\cap \Gamma$ and, ultimately, induces a potential which are locally H\"older continuous in the symbolic metric 
(we refer the reader to \cite{Bo75} for the definition and more details). 
 Using Theorem~\ref{thm:BR75}, Bowen and Ruelle~\cite{BR75} reduced the construction of equilibrium states for the flow to the construction of equilibrium states for subshifts of finite type. The following instrumental lemma bridges between the discrete and continuous time dynamical systems.

\begin{proposition}\label{le:reductionBR75}
Let $(X_t)_t$ be a suspension semiflow on $\Sigma_A^r$ over a subshift of finite type $\sigma_A$, let $\psi : \Sigma_A^r \to\mathbb R$ be a locally H\"older continuous potential and let $\bar\psi: \Sigma_A \to\mathbb R$ defined by $\bar\psi(x)=\int_0^{r(x)} [\psi( X_s(x) )- P_{ \Sigma_A^r}((X_t)_t, \psi)]\, ds$. The following properties are equivalent:
\begin{enumerate}
\item there exists an equilibrium state  $\mu$ for the flow $(X_t)_t$ with respect to $\psi$; 
\item $P_{ \Sigma_A}(\sigma, \bar\psi)=0$ and  there exists an equilibrium state  $\nu$ for $\sigma$ with respect to $\bar\psi$, 
\end{enumerate}
\end{proposition}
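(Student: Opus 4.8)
The plan is to carry out the classical Bowen--Ruelle reduction \cite{BR75}, converting both statements into the single assertion that some $\sigma$-invariant measure realises the flow pressure. First I would recall the dictionary between the semiflow and its base: flow-invariant probability measures $m$ on $\Sigma_A^r$ are in bijection with $\sigma$-invariant probability measures $\nu$ on $\Sigma_A$, the correspondence being $m = (\nu\times\mathrm{Leb})|_{\Sigma_A^r}/\int r\,d\nu$. Since $r$ is a bounded roof function on the compact space $\Sigma_A$ one has $0<\inf r\leqslant \int r\,d\nu\leqslant \sup r<\infty$, so the normalisation makes sense and the correspondence is a genuine bijection; moreover $\psi_r(x):=\int_0^{r(x)}\psi(X_s(x))\,ds$ and $\bar\psi=\psi_r-P\,r$ (with $P:=P_{\Sigma_A^r}((X_t)_t,\psi)$) are bounded measurable functions on $\Sigma_A$. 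Under this bijection Abramov's entropy formula gives $h_m(X_1)=h_\nu(\sigma)/\int r\,d\nu$, while Fubini gives $\int\psi\,dm=(\int\psi_r\,d\nu)/(\int r\,d\nu)$.

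Substituting these identities into the variational principle for the semiflow (cf. Subsection~\ref{subsec:termo} applied to $(X_t)_t$, and \cite{BR75}) yields
\[
P = \sup_{\nu}\ \frac{h_\nu(\sigma)+\int\psi_r\,d\nu}{\int r\,d\nu},
\]
the supremum being over $\sigma$-invariant probabilities $\nu$. Because $\int r\,d\nu>0$, for a fixed $\nu$ the inequality $(h_\nu(\sigma)+\int\psi_r\,d\nu)/(\int r\,d\nu)\leqslant P$ is equivalent to $h_\nu(\sigma)+\int(\psi_r-P r)\,d\nu\leqslant 0$, that is, to $h_\nu(\sigma)+\int\bar\psi\,d\nu\leqslant 0$, and the two become equalities simultaneously. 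Taking the supremum over $\nu$ and invoking the variational principle for $\sigma$ gives $P_{\Sigma_A}(\sigma,\bar\psi)=\sup_\nu(h_\nu(\sigma)+\int\bar\psi\,d\nu)\leqslant 0$ unconditionally.

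The equivalence then follows immediately. If $\mu$ is an equilibrium state of the semiflow for $\psi$ and $\nu$ the $\sigma$-invariant measure it corresponds to, then $(h_\nu(\sigma)+\int\psi_r\,d\nu)/(\int r\,d\nu)=P$, hence $h_\nu(\sigma)+\int\bar\psi\,d\nu=0$; combined with the universal bound above this forces $P_{\Sigma_A}(\sigma,\bar\psi)=0$ and shows that $\nu$ attains it, which is (2). Conversely, if $P_{\Sigma_A}(\sigma,\bar\psi)=0$ and $\nu$ is an equilibrium state of $\sigma$ for $\bar\psi$, then $h_\nu(\sigma)+\int\bar\psi\,d\nu=0$, so the flow-invariant measure $m$ associated to $\nu$ satisfies $h_m(X_1)+\int\psi\,dm=P$, i.e. $m$ is an equilibrium state of the semiflow for $\psi$, which is (1).

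The content is essentially a change of variables between the flow pressure and the base pressure, so there is no deep obstacle; the points that must be handled carefully are the two classical inputs — Abramov's formula and the identification of the (separated-set) flow pressure with $\sup_m(h_m(X_1)+\int\psi\,dm)$ over flow-invariant $m$ — together with the bookkeeping of the normalising factor $\int r\,d\nu$, whose positivity and finiteness (immediate from compactness of $\Sigma_A$ and positivity of $r$) is exactly what turns the multiplicative inequality $(h_\nu(\sigma)+\int\psi_r\,d\nu)/(\int r\,d\nu)\leqslant P$ into the additive relation $h_\nu(\sigma)+\int\bar\psi\,d\nu\leqslant 0$ used above.
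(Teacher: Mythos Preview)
The paper does not supply its own proof of this proposition: it is stated as an instrumental lemma immediately after the sentence ``Bowen and Ruelle~\cite{BR75} reduced the construction of equilibrium states for the flow to the construction of equilibrium states for subshifts of finite type,'' and no argument follows. Your proposal is precisely the classical Bowen--Ruelle argument that the citation points to --- the bijection $m\leftrightarrow\nu$ between flow-invariant and $\sigma$-invariant probabilities, Abramov's formula $h_m(X_1)=h_\nu(\sigma)/\int r\,d\nu$, the Fubini identity $\int\psi\,dm=\int\psi_r\,d\nu/\int r\,d\nu$, and the algebraic observation that $(h_\nu(\sigma)+\int\psi_r\,d\nu)/\int r\,d\nu\leqslant P$ is equivalent to $h_\nu(\sigma)+\int\bar\psi\,d\nu\leqslant 0$ --- and it is correct. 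There is nothing to compare: your write-up is the standard proof the paper is implicitly invoking.
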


In the eventuality that the equilibrium states $\mu$ and $\nu$ above exist and are unique, then
$
\mu=\frac{\nu\times \text{Leb}}{\int_{\Sigma_A} r d\nu }
$
is the lift of the $\sigma_A$-invariant probability $\nu$.

\subsection{Suspension flows of hyperbolic skew-products}\label{subsec:susp}

We begin by describing the abstract framework of suspension (semi)flows over piecewise expanding and piecewise hyperbolic maps, which will be used later to model certain hyperbolic sets for flows.
Given a subset $J\subset\mathbb{R}^d$, we say that $J$ is an \textit{almost John} domain if there exist constants $C_1, \varepsilon_0>0$ and $u\geqslant 1$ such that for all $\varepsilon\in(0,\varepsilon_0)$ and $x\in J$ there exists $y\in J$ so that
 $d(x,y)\leqslant \varepsilon$ and the open ball of center in $y$ and radius $C_1\varepsilon^u$ is contained in $J$.
Throughout the text we will assume that $J$ is almost John and that the boundary of $J$ has {box-counting dimension} strictly less than $d$.
Recall that if $S$ is a non-empty bounded subset of $\mathbb{R}^d$ and let $N_{\delta}(S)$ be the smallest number of sets of diameter at most $\delta$ needed to cover $S$, the \textit{box-counting dimension} of $S$ is defined by 
$\dim_B S=\lim_{\delta\to 0}\frac{\log N_{\delta}(S)}{-\log\delta}$.

\medskip
We say that $T:J\to J$ is a \textit{piecewise expanding $\mathcal{C}^{1+\alpha}$ Markov map} if there exists a finite collection $\mathcal{P}=\{P_1,\ldots,P_m\}$ 
formed by connected open subsets of $J$ satisfying:
\begin{itemize}
\item $\bigcup_{i=1}^m P_i$ is dense in $J$;
\item $T$ is a ${C}^1$ diffeomorphism from $P_i$ to $T(P_i)$ for each $i=1,\ldots,m$;
\item  for each $1\leqslant i \leqslant m$ the image $T(P_i)$ is the union of a subcolection of elements of $\cP$;
\item there exists $\lambda>1$ such that
\begin{eqnarray}\label{h-1}
\|(DT^n(x))^{-1}\|\leqslant \lambda ^{-n}\,\, \text{for all}\,\, x\in J\,\, \text{and}\,\, n\in\mathbb{N},
\end{eqnarray}
\end{itemize}
We may assume that 
$\lambda=\min_{x\in J} \|DT(x)^{-1}\|^{-1}$, and take $\rho:=\max_{x\in J} \|DT(x)\|$. 
Let $\mathcal{P}=\{{P}_1,\ldots,{P}_m\}$ be a finite Markov partition for a $C^ {1+\alpha}$ piecewise expanding map $T$ of the interval $[0,1]$ such that $T\mid_{P_i}$ is injective, for every $1\leqslant i \leqslant m$. 
For each $n\geqslant 1$ let $\mathcal{P}^{(n)}$ be the refined partition 
$
\bigvee^{n-1}_{j=0} T^{-j}(\mathcal{P})=
	\big\{{P}_{i_0}\cap T^{-1}(P_{i_1})\cap\cdots\cap T^{-(n-1)}({P}_{i_{n-1}})\,:\,{P}_{i_j}\in\mathcal{P}\big\},
$
and let $\mathcal{P}^{(n)}(x)$ 
denote the partition element of $\mathcal{P}^{(n)}$ containing $x$.
The Markov property ensures that
$T(\mathcal{P}^{(n)}(x))
	= \mathcal{P}^{(n-1)}(T(x)).
	$
Given $n\geqslant 1$ we denote by $\mathcal H_n$ the collection of inverse branches for $T^n\mid_P$, with $P\in \mathcal P^{(n)}$. 
We will also require the following:
\begin{itemize}
\item there exists $C_2>0$ such that, for every $n\geqslant 1$,
\begin{eqnarray}\label{h0}
\| Dh_n(x) - Dh_n(y)\| \leqslant C_2 \, \|Dh_n(x)\|\, d(x,y)^{\alpha}\,\, \text{for all} \; h_n \in \mathcal H_n\; \text{and}\;  x,y\in \mbox{Dom}(h_n).
\end{eqnarray}
\end{itemize}
The previous  H\"older continuity property of the derivative of inverse branches (which holds in dimension one as a simple consequence that $\log \|DT\|$ is H\"older continuous) will be used solely to spread a pointwise UNI condition to uniform size
neighborhoods around those points (see the proof of Lemma~\ref{le:UNI} and Proposition~\ref{prop:transv-implies-wUNI}).

Finally we require that $T$ is \textit{covering}, meaning that for every open ball $B\subset J$ there exists $n\in\mathbb{N}$ such that $T^nB=J$. 
In the present context this is not a strong requirement in comparison to transitivity. On the one hand, any transitive piecewise expanding Markov map is semiconjugate to a transitive subshift of finite type. On the other hand, 
the spectral decomposition 
ensures that there exists a finite union $\tilde J\subset J$ of
elements of $\cP$ and $N\geqslant 1$ so that $T^N\mid_{\tilde J}$ is 
covering.  Hence, 
reducing the cross-section if necessary, 
a suspension semiflow over a transitive piecewise expanding Markov map
can be modeled by a piecewise expanding, Markov and covering map.
Finally, recall that any such map $T$ is called \emph{full branch} if $T(P_j)\supset \bigcup_{i=1}^m P_i$ for every $1\leqslant j \leqslant m$.

\medskip
Given a compact Riemannian manifold $M$, let $d$ denote the distance induced by the Riemannian metric. We say that a skew-product $F: J \times M \to J \times M$ given by $F(x,y)=(T(x),G(x,y))$
is a $C^{1+\al}$ \emph{piecewise hyperbolic skew-product} if:
\begin{itemize}
\item
 $T:J\to J$ is a ${C}^{1+\alpha}$-piecewise expanding  Markov map,
 \item there are constants $C_3>0$ and $\gamma_0\in (0,1)$ such that 
$$
d(F^n(x,y),F^n(x,z))\leqslant C_3{\gamma_0}^n d(y,z) \quad \text{for each $x\in J$ 
and all $y,z\in M$.}
$$
\end{itemize}
Observe that the natural projection 
$
\pi:J\times M\to J
	\;\text{given by}
	\;
	\pi(x,y)=x
$ 
establishes a semiconjugacy between the skew-product $F$ and the piecewise expanding
map $T$.

\medskip
Given a metric space $Y$, a function $f: Y \to Y$ and a roof function $r: Y \to \mathbb{R}^+$ so that $\inf r>0$, 
the \textit{suspension semiflow} $(X_t)_{t\geqslant 0}$ (determined by $f$ and $r$) 
evolves on 
$$
Y^r=\{(x,t)\in Y\times \mathbb{R}^{+}:\, 0\leqslant t\leqslant r(x)\}/ \sim,
$$
where the quotient space is characterized by the equivalence relation $(x, r(x))\sim (f(x), 0)$ $\forall x\in Y$. In local coordinates, the suspension semiflow $(X_t)_{t\geqslant 0}$ is given by
\begin{eqnarray}\label{def:semiflow}
X_t(x, s)=\Big(f^n(x), t+s-\sum^{n-1}_{i=0} r(f^i(x))\Big)
	\quad \text{ for all $t\geqslant 0$},
\end{eqnarray}
where the integer $n=n(x,s,t)\geqslant 0$ is 
determined by 
$\sum^{n-1}_{i=0} r(f^i(x))\leqslant s+t<\sum^{n}_{i=0} r(f^i(x))$.
If $f$ is invertible then the expression \eqref{def:semiflow} makes sense for all $t \in \mathbb R$ and $(X_t)_{t\in \mathbb R}$ defines a flow. Moreover, given an $f$-invariant, ergodic probability $\mu$, the $(\Phi_t)_t$-invariant probability measure 
defined by 
\begin{equation}\label{eq:susp-measure}
\mu^r:=\dfrac{\mu\ltimes \leb}{\int_Y r \,d\mu}
\end{equation}
is ergodic. 
We will consider suspension semiflows over piecewise expanding ${C}^{1+\alpha}$ Markov maps
$(T,J,\mathcal P)$ and roof functions $r$ satisfying the following requirements:
\begin{itemize}
\item $r$ is $C^{1+\alpha}$-smooth on each Markov domain; 
\item there exists $C_4>0$ so that
\begin{equation}\tag{H1}\label{h1}
\|D(r\circ h)(x)\|\leqslant C_4\,\,\,\; \text{for all}\,\,x\in \mbox{Dom}(h) \text{ and } h\in \mathcal H
\end{equation}
where $\mathcal H$ denotes the space of inverse branches of $T$; 
\item $r$ is bounded and satisfies 
\begin{equation}\tag{H2}\label{h2}
0<\inf{r} \leqslant r(x)\leqslant 1,\,\,\,\; \text{for all}\,\,\, x\in J.
\end{equation}
\item $r$ is not ${C}^1$-piecewise cohomologous to a locally constant roof function, i.e., there exists no $\theta\in{C}^1(J,\mathbb{R})$ so that $r=\theta\circ T-\theta+\chi$ where $\chi$ is constant on each partition element.

\end{itemize}

\subsection{Adapted partitions for piecewise expanding Markov interval maps}\label{subsec:addapted-partititons} 

The strategy to overcome the absence of the Federer property (which would 
allow to compare measure of adjacent cylinders) is to use the 
Markov and Gibbs properties to compare the measure of
cylinders with certain well chosen sub-cylinders which cover domains on which the transfer operator exhibits cancellations. 
This argument improves the ideas used by the authors in the special context of full branch piecewise expanding maps  \cite{DV}, 
to produce an appropriate partition formed by cylinders of different depths but similar sizes ${|b|}^{-1}$.

Let $T$ be a piecewise expanding interval map and recall that
$\rho=\max_x|T'(x)| \geqslant \min_{x} |T'(x)| = \lambda > 1$. 
Note that the mean value theorem on domains of smoothness for the maps $T^i$ implies that for every 
{$x\in \bigcup_{P\in \cP^{(n)}} P$}, 
\begin{eqnarray}\label{eq:diame}
\rho^{-i}\diam(\mathcal{P}(T^{i}(x)))\leqslant \diam(\mathcal{P}^i(x))\leqslant \lambda^{-i}\diam(\mathcal{P}(T^i(x)))
\quad \text{for each $i\geqslant 0$.}
\end{eqnarray}
The following lemma provides a partition of the interval (non-homogeneous in the number of refinements) by cylinders with essentially the same size. 

\begin{lemma}\label{le:partQ}
If $\Delta>0$ and $|b|> \dfrac{2\Delta\rho}{\min \diam ({P}_i)}$ then
there exists a finite partition $\cQ$ of the interval $[0,1]$ (except for a finite number of points) 
such that for every $Q\in \cQ$ there exists $\ell=\ell(Q,b)\geqslant 1$
such that $Q\in \cP^{(\ell)}$ and 
\begin{equation}\label{est:size}
\dfrac{2\Delta}{|b|}\leqslant \diam(Q) \leqslant \dfrac{2\Delta}{|b|} \rho.
\end{equation}
\end{lemma}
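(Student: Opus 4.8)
The plan is to build the partition $\cQ$ greedily by a stopping-time construction along the refinements $\cP^{(n)}$. Fix $\Delta>0$ and $b$ with $|b| > 2\Delta\rho / \min_i \diam(P_i)$. Start with the initial partition $\cP=\cP^{(1)}$. For a cylinder $P \in \cP^{(n)}$, declare $P$ \emph{final} (and put it in $\cQ$ with $\ell(P,b)=n$) as soon as $\diam(P) \le 2\Delta\rho/|b|$; otherwise refine it, replacing $P$ by the elements of $\cP^{(n+1)}$ contained in it, and repeat. Since $T$ is expanding, $\diam(\cP^{(n)}(x)) \le \lambda^{-n}\diam(\cP(T^n(x))) \le \lambda^{-n}$ by \eqref{eq:diame}, so the diameters of cylinders go uniformly to $0$; hence every point reaches a final cylinder after finitely many steps, and by compactness the resulting collection $\cQ$ is finite and partitions $[0,1]$ up to the (finite) set of cylinder endpoints encountered. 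Each $Q \in \cQ$ lies in some $\cP^{(\ell)}$ by construction, giving the first assertion.

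Next I would verify the two-sided size estimate \eqref{est:size}. The upper bound $\diam(Q) \le 2\Delta\rho/|b|$ holds by the stopping rule (a cylinder is only declared final when its diameter drops to at most this value). For the lower bound, let $Q \in \cP^{(\ell)}$ be final; if $\ell=1$ then $Q=P_i$ for some $i$ and $\diam(Q) = \diam(P_i) \ge \min_j \diam(P_j) > 2\Delta\rho/|b| \ge 2\Delta/|b|$ by the hypothesis on $|b|$. If $\ell \ge 2$, let $Q' \in \cP^{(\ell-1)}$ be the parent cylinder of $Q$; since $Q'$ was refined rather than declared final, $\diam(Q') > 2\Delta\rho/|b|$. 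Now apply \eqref{eq:diame} relating consecutive refinements: writing $Q = \cP^{(\ell)}(x)$ and $Q' = \cP^{(\ell-1)}(x)$ for $x \in Q$, the mean value theorem on the smoothness domain of $T$ gives $\diam(Q) \ge \rho^{-1}\diam(T(Q)) = \rho^{-1}\diam(Q')$ (using the Markov property $T(\cP^{(\ell)}(x)) = \cP^{(\ell-1)}(T(x))$), whence $\diam(Q) > \rho^{-1} \cdot 2\Delta\rho/|b| = 2\Delta/|b|$. This establishes \eqref{est:size} in all cases.

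The only genuinely delicate point is bookkeeping at the endpoints: the greedy refinement is carried out on the open Markov cylinders, so the union of all final $Q$'s covers $[0,1]$ except for the countable set of cylinder boundary points that appear at the (finitely many) refinement levels actually used; since $\cQ$ is finite, this exceptional set is finite, as claimed. One should also note that the lower-bound argument uses $\diam(Q') \ge \rho^{-1}$ times nothing problematic — the comparison $\diam(Q) \ge \rho^{-1}\diam(T(Q))$ is exactly the left inequality in \eqref{eq:diame} with $i=1$ applied to the branch of $T$ carrying $Q$ onto $Q'$, valid because $Q$ lies inside a single element of $\cP$. I do not anticipate any serious obstacle; the construction is standard, and the hypothesis on $|b|$ is precisely what forces even the coarsest cylinders $P_i$ to satisfy the lower bound, so that the stopping time is well defined and $\ge 1$ everywhere.
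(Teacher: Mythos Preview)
Your overall strategy — a stopping-time construction along the refinements $\cP^{(n)}$ — is exactly the paper's approach; the paper just runs the stopping rule from the other side (it takes $\ell(x,b)$ to be the \emph{largest} integer with $\diam\cP^{(\ell)}(x)\ge 2\Delta/|b|$, so the lower bound in \eqref{est:size} is immediate and the upper bound requires the one-step diameter comparison). Your variant reverses which bound comes for free.

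There is, however, a genuine slip in your lower-bound argument. You write
\[
\diam(Q)\ \ge\ \rho^{-1}\diam(T(Q))\ =\ \rho^{-1}\diam(Q'),
\]
invoking the Markov relation $T(\cP^{(\ell)}(x))=\cP^{(\ell-1)}(T(x))$. The inequality $\diam(Q)\ge \rho^{-1}\diam(T(Q))$ is correct, but $T(Q)$ is \emph{not} the parent cylinder $Q'$: one has $T(Q)=\cP^{(\ell-1)}(T(x))$, whereas $Q'=\cP^{(\ell-1)}(x)$, and these are different level-$(\ell-1)$ cylinders in general (their itineraries are shifted by one symbol). So the identification $T(Q)=Q'$ is false, and with it the chain of inequalities giving $\diam(Q)>2\Delta/|b|$.

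The fix is to argue as the paper does: run the stopping rule so that the lower bound holds by definition, and obtain the upper bound from the fact that the \emph{next} refinement already violates the lower threshold. Concretely, if $\ell$ is maximal with $\diam\cP^{(\ell)}(x)\ge 2\Delta/|b|$ then $\diam\cP^{(\ell+1)}(x)<2\Delta/|b|$; since one refinement shrinks diameters by at most a factor $\rho$ (this is the content of \eqref{eq:diame} applied at consecutive levels), one gets $\diam\cP^{(\ell)}(x)\le \rho\cdot\diam\cP^{(\ell+1)}(x)<2\Delta\rho/|b|$. Your termination and finiteness arguments are fine as written.
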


\begin{proof}
While the argument is identical to the one used in \cite[Lemma~3.3]{DV}, we include it for completeness. 
Fix $b$ as above.  Given $x\in I:=[0,1]$, 
let $\ell = \ell(x,b)\geqslant 1$ be the largest integer such that $\diam \cP^{(\ell)}(x) \geqslant 2\Delta |b|^{-1}$.
Such an integer $\ell$ does exist since $\min\diam \mathcal{P}_i > 2\Delta |b|^{-1} \rho$ and the diameter of the cylinders is exponentially decreasing, as a consequence of ~\eqref{eq:diame}. 
Inequalities ~\eqref{eq:diame} also ensure that $\diam(\cP^{(\ell)}(x)) \leqslant 2\Delta |b|^{-1} \rho$, because otherwise $\diam(\cP^{(\ell+1)}(x)) \geqslant 2\Delta |b|^{-1}$. By construction we have that $ \ell(x,b)= \ell(y,b)$ for every $y\in  \cP^{(\ell)}(x)$, hence we may determine that $Q= \cP^{(\ell)}(x)$ belongs to $\cQ$ and set $\ell(Q,b)= \ell(x,b) \geqslant 1$.
The collection of cylinder sets
$
\mathcal{Q}=\big\{\mathcal P^{\ell(x,b)}(x): x\in I \big\}
$
 has at most $(2\De)^{-1}|b|$ elements, which are pairwise disjoint elements and cover $I$ up to a finite number of points, and each element $Q\in \cQ$ satisfies \eqref{est:size}. This proves the lemma.
\end{proof}

As Gibbs measures are fully supported and non-atomic (see e.g. \cite{Bo75}), 
every Gibbs measure gives zero weight to the points in the boundary of elements in $\cQ$. 
In consequence, the finite collection $\cQ$ in Lemma~\ref{le:partQ} is adapted to \emph{all} Gibbs measures. 
The next lemma shows that the inconvenient dependence of $\ell$ on $b$ can be disregarded 
when estimating the measure of sets with size of order $\frac1{|b|}$.

\begin{lemma}\label{le:cancellations-measure}
Fix $0<\delta<\Delta$, let $\mathcal Q=(Q_i)_{1\leqslant i \leqslant \# \cQ}$ be an enumeration of the elements in $\cQ$ and let 
$\mathcal J=(J_i)_{1\leqslant i \leqslant \# \cQ}$ be any collection of intervals of diameter $2\delta {|b|^{-1}}$ such that 
each $J_i \subset Q_i$ 
for every $1\leqslant i \leqslant \# \cQ$. There exists a constant $\gamma>0$ (depending on $\mu$ but independent of $b$) such that
$\mu(J_i) \geqslant \gamma \mu(Q_i)$ for every $1\leqslant i \leqslant \#\cQ$.
\end{lemma}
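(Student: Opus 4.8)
The plan is to reduce the comparison $\mu(J_i) \geqslant \gamma\, \mu(Q_i)$ to a statement about a bounded number of refinements, using the Gibbs property \eqref{eq:Gibbs} together with the geometry coming from Lemma~\ref{le:partQ}. First I would fix $i$, write $\ell = \ell(Q_i, b)$ for the depth with $Q_i \in \cP^{(\ell)}$, and observe that by \eqref{est:size} one has $\diam(Q_i) \in [\,2\Delta|b|^{-1},\, 2\Delta|b|^{-1}\rho\,]$ while $\diam(J_i) = 2\delta|b|^{-1}$. Since $J_i \subset Q_i$, the interval $J_i$ contains a sub-interval lying in a single element of a bounded further refinement $\cP^{(\ell+k)}$: indeed, by \eqref{eq:diame}, refining $k$ more times shrinks cylinders by at most a factor $\lambda^{-k}$ and at least $\rho^{-k}$, so choosing $k = k(\delta/\Delta, \lambda, \rho)$ large enough (independent of $b$ and of $i$) guarantees that some element $Q' \in \cP^{(\ell+k)}$ with $Q' \cap Q_i = Q'$ satisfies $\diam(Q') \leqslant \delta|b|^{-1}$ and moreover that $Q'$ can be chosen inside $J_i$. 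Here I would use the one-dimensional Markov structure: the children of $Q_i$ in $\cP^{(\ell+1)}$ subdivide the interval $Q_i$, and by the ``$a = 1/2$'' fact mentioned in the introduction (any sub-interval of relative size comparable to $1$ contains a child up to bounded depth), a subinterval of $Q_i$ of length $2\delta|b|^{-1} \geqslant (\delta/(\Delta\rho))\diam(Q_i)$ must contain some cylinder $Q' \in \cP^{(\ell+k)}$ for $k$ depending only on $\delta/\Delta$, $\lambda$, $\rho$, and the combinatorics of $\cP$.

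Next I would apply the Gibbs inequality \eqref{eq:Gibbs} twice. Pick $y \in Q'$ (so also $y \in Q_i$). Since $Q_i = \cP^{(\ell)}(y)$ and $Q' = \cP^{(\ell+k)}(y)$, \eqref{eq:Gibbs} gives
\begin{equation*}
\mu(Q_i) \leqslant C_5 \exp(-P\ell + S_\ell\phi(y)), \qquad
\mu(Q') \geqslant C_5^{-1}\exp(-P(\ell+k) + S_{\ell+k}\phi(y)).
\end{equation*}
Dividing, and writing $S_{\ell+k}\phi(y) - S_\ell\phi(y) = \sum_{j=\ell}^{\ell+k-1}\phi(T^j y)$, which is bounded in absolute value by $k\,\|\phi\|_\infty$, we obtain
\begin{equation*}
\frac{\mu(Q')}{\mu(Q_i)} \geqslant C_5^{-2}\, e^{-k|P|}\, e^{-k\|\phi\|_\infty} =: \gamma > 0,
\end{equation*}
a constant depending only on $\mu$ (through $C_5$, $P$, $\|\phi\|_\infty$) and on the fixed number $k$, hence independent of $b$ and of $i$. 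Since $Q' \subset J_i$, monotonicity of $\mu$ yields $\mu(J_i) \geqslant \mu(Q') \geqslant \gamma\, \mu(Q_i)$, as desired.

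The main obstacle I anticipate is the step asserting that $J_i$ actually contains a cylinder of $\cP^{(\ell+k)}$ for a uniform $k$: a priori $J_i$ is an arbitrary subinterval of $Q_i$, and the children cylinders of $Q_i$ may have wildly differing sizes if $T$ is only piecewise expanding (not full branch), so one child could be as large as $\rho^{-1}\diam(Q_i)$ or as small as allowed by \eqref{eq:diame}. One resolves this by refining until \emph{every} cylinder of that generation inside $Q_i$ has diameter at most $\delta|b|^{-1}$ — this requires at most $k = \lceil \log_\lambda(2\Delta\rho/\delta)\rceil + 1$ further refinements by \eqref{eq:diame} — and then noting that $J_i$, being an interval of length $2\delta|b|^{-1}$ at least twice this bound, must fully contain at least one such cylinder, because the generation-$(\ell+k)$ cylinders partition $Q_i$ into consecutive intervals each of length $\leqslant \delta|b|^{-1}$. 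A mild care is needed at the two endpoints of $Q_i$ (where $J_i$ might straddle a cylinder only partially), but since $J_i$ has length exceeding the sum of any two adjacent such cylinders this is harmless. With $k$ thus fixed the Gibbs estimate closes the argument.
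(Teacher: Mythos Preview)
Your argument is correct and follows the natural strategy: locate inside $J_i$ a cylinder $Q' \in \cP^{(\ell+k)}$ for a \emph{uniform} $k$, then compare $\mu(Q')$ with $\mu(Q_i)$ via the Gibbs inequality \eqref{eq:Gibbs}. The paper itself omits the proof and refers to \cite[Lemma~3.5]{DV}, where the argument is exactly of this type, so your approach matches.

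One technical point deserves tightening. To guarantee that every element of $\cP^{(\ell+k)}$ inside $Q_i$ has diameter at most $\delta|b|^{-1}$, invoking \eqref{eq:diame} alone is not quite enough: \eqref{eq:diame} controls $\diam(\cP^{(\ell+k)}(x))$ in terms of $\lambda^{-(\ell+k)}$, but you have no uniform upper bound on $\lambda^{-\ell}$ in terms of $|b|^{-1}$ (only $\diam Q_i \leqslant 2\Delta\rho|b|^{-1}$, which involves $\rho^{-\ell}$, not $\lambda^{-\ell}$). The clean way is to push forward by $T^{\ell-1}$, which maps $Q_i$ diffeomorphically onto a fixed element $P_*\in\cP$; by bounded distortion of $T^{\ell-1}$ on $Q_i$ (a standard consequence of $\log|T'|\in C^\alpha$), the interval $T^{\ell-1}(J_i)\subset P_*$ has length at least a fixed fraction $c\,\delta/(\Delta\rho)$ of $\diam P_*$, uniformly in $b$ and $i$. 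Since there are finitely many $P_*$, a fixed further depth $k$ makes every $\cP^{(k+1)}$-cylinder in $P_*$ shorter than half that length, so $T^{\ell-1}(J_i)$ contains one, and pulling back gives your $Q'\subset J_i$. With this adjustment your Gibbs computation closes the proof exactly as written.
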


\begin{proof}
The proof is identical to the one of \cite[Lemma~3.5]{DV}, hence we shall omit it. 
\end{proof}

The previous property will be crucial to estimate cancellations by looking at deeper cylinder sets 
instead of looking at the measure of cylinders sets that are neighbors.
More precisely:

\begin{corollary}\label{comparijpropfinite}
Assume that $\mathcal J=(J_i)_{1\leqslant i \leqslant \# \cQ}$ is any collection of intervals as in 
Lemma~\ref{le:cancellations-measure}. There exists $\delta'>0$ (depending on $\mu$ and independent of $b$) 
so that, if a {piecewise H\"older continuous} $w: I \to \mathbb R^+$  
satisfies $|\log w|_{\alpha}\leqslant K|b|^{\alpha}$  
then
\begin{eqnarray*}
\int_{{J}}w\, d\mu\geqslant \delta'\int_{{I}}w\, d\mu,
\end{eqnarray*}
where ${J}=\bigcup_{1\leqslant i \leqslant \#\cQ} J_i$. 
\end{corollary}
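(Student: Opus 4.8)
The plan is to combine the partition $\cQ$ from Lemma~\ref{le:partQ}, the measure comparison of Lemma~\ref{le:cancellations-measure}, and the bounded-distortion hypothesis $|\log w|_\alpha \le K|b|^\alpha$ on the weight $w$. First I would decompose the integral of $w$ over $I$ along the partition $\cQ$, writing $\int_I w\, d\mu = \sum_{i} \int_{Q_i} w\, d\mu$, and similarly $\int_J w\, d\mu = \sum_i \int_{J_i} w\, d\mu$; since the estimates will be uniform over $i$, it suffices to produce a constant $\delta'>0$, independent of $b$, with $\int_{J_i} w\, d\mu \ge \delta' \int_{Q_i} w\, d\mu$ for each $i$.

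For a fixed $i$, the key point is that $Q_i$ has diameter at most $\tfrac{2\Delta}{|b|}\rho$ by~\eqref{est:size}, so for any $x,y\in Q_i$ we have $d(x,y)\le \tfrac{2\Delta\rho}{|b|}$, and hence
\begin{eqnarray*}
\Big|\log\frac{w(x)}{w(y)}\Big| \le |\log w|_\alpha\, d(x,y)^\alpha \le K|b|^\alpha \Big(\frac{2\Delta\rho}{|b|}\Big)^\alpha = K(2\Delta\rho)^\alpha.
\end{eqnarray*}
Thus $w$ oscillates on $Q_i$ by at most a multiplicative constant $C_w:=\exp\big(K(2\Delta\rho)^\alpha\big)$, which crucially does \emph{not} depend on $b$. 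Consequently, for each $i$,
\begin{eqnarray*}
\int_{J_i} w\, d\mu \ge \Big(\inf_{Q_i} w\Big)\, \mu(J_i) \ge C_w^{-1}\Big(\sup_{Q_i} w\Big)\,\mu(J_i) \ge C_w^{-1}\Big(\sup_{Q_i} w\Big)\,\gamma\, \mu(Q_i) \ge C_w^{-1}\gamma \int_{Q_i} w\, d\mu,
\end{eqnarray*}
where the second-to-last inequality uses Lemma~\ref{le:cancellations-measure} (with $\delta$ the radius defining the intervals $J_i$, chosen once and for all with $\delta<\Delta$), and $\gamma>0$ there depends only on $\mu$, not on $b$.

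Summing over $1\le i\le \#\cQ$ then gives $\int_J w\, d\mu \ge C_w^{-1}\gamma \int_I w\, d\mu$, so the corollary holds with $\delta' := C_w^{-1}\gamma$, which depends only on $\mu$ (through $\gamma$), on the fixed constants $K,\Delta,\rho,\alpha$, and not on $b$. The only mild subtlety — hardly an obstacle — is bookkeeping the hypotheses of Lemma~\ref{le:partQ} and Lemma~\ref{le:cancellations-measure}: one must assume $|b|$ is large enough (larger than $\tfrac{2\Delta\rho}{\min\diam P_i}$) for $\cQ$ to be defined, and fix $0<\delta<\Delta$ beforehand so that the intervals $J_i$ of diameter $2\delta|b|^{-1}$ indeed fit inside the $Q_i$; both are already built into the standing setup. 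No new ideas beyond patching together the two lemmas and the equicontinuity of $\log w$ at scale $|b|^{-1}$ are required.
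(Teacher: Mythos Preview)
Your proof is correct and follows essentially the same approach as the paper: reduce to a per-$Q_i$ estimate, use the diameter bound $\diam(Q_i)\le 2\Delta\rho|b|^{-1}$ together with $|\log w|_\alpha\le K|b|^\alpha$ to get a $b$-independent oscillation bound $e^{K(2\Delta\rho)^\alpha}$ on $w$, then combine with Lemma~\ref{le:cancellations-measure} to obtain $\delta'=\gamma e^{-K(2\Delta\rho)^\alpha}$. The constants and chain of inequalities match the paper's proof exactly.
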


\begin{proof}
It is enough to show there exists a uniform $\delta'>0$ so that 
$
\int_{{J_i}}w\, d\mu\geqslant \delta'\int_{{Q_i}}w\, d\mu$
$\text{for every $1\leqslant i \leqslant \#\cQ$.}
$
Fix $1\leqslant i \leqslant \#\cQ$. By construction $\diam(Q_i) \leqslant {2\Delta}{|b|^{-1}\rho} $ (recall Lemma~\ref{le:partQ}).
This, together with the assumption that $|\log w|_{\alpha}\leqslant K|b|^{\alpha}$  ensures that 
\begin{align}\label{eq:Holderquot}
\Big|\log \frac{w({x})}{w({y})} \Big| 
	 \leqslant K |b|^{\alpha} d(x,y)^\alpha 
	 \leqslant K'
	 \quad \text{for any $x,y\in Q_i$},
\end{align}
where $K'=K\left( 2\Delta \rho \right)^{\alpha}>0$. 
Altogether, Lemma~\ref{le:cancellations-measure} and inequality~\eqref{eq:Holderquot} imply
$
\int_{J_i} w\, d\mu 
	\geqslant \mu(J_i) \,\inf_{J_i} w
	\geqslant 	 \gamma \mu (Q_i) \; e^{-K'} \max_{Q_i} w
	\geqslant 	 \gamma e^{-K'} \, \int_{Q_i} w\, d\mu.
	$
The lemma follows with $\delta'=\gamma e^{-K'}>0$, which is independent of both $b$ and $w$.
\end{proof}

\section{Transfer operators}\label{transfer}
\subsection{Twisted transfer operators}

In this subsection we define the suitable transfer operators and Banach spaces.  Given $\alpha\in (0,1]$, consider the space 
$$
C^{\alpha}(J):=\{v:J\to\mathbb{C}\,:\,|v|_{C^{\alpha}(J)}<\infty\}
$$ 
of $\alpha$-H\"older continuous potentials, where $|v|_{C^{\alpha}(J)}:=\sup_{x\neq y} \frac{|v(x)-v(y)|}{d(x,y)^{\alpha}}$. We remark that as $J$ is the disjoint union of a finite number of connected subsets of $\mathbb{R}^n$, the previous supremum is taken over all $x,y\in J$ which are in the same connected component. It is well known that this is a Banach space when equipped with the norm $\|\cdot\|_{C^{\alpha}(J)}:=|\cdot|_{C^{\alpha}(J)}+\|\cdot\|_{L^{\infty}(J)}$ (here $L^{\infty}(J)$ will always denote the $L^\infty$-space with respect to a certain fixed equilibrium state in the class we will describe below). For each 
$b\in\mathbb{R}$ we consider the equivalent norm 
\begin{equation}\label{eq:def-norm-b}
\|\cdot\|_{(b)}:=\frac{1}{1+|b|^{\alpha}}|\cdot|_{C^{\alpha}(J)}+\|\cdot\|_{L^{\infty}(J)}.
\end{equation}
It is clear that $|\cdot|_{C^{\alpha}(J)} \leqslant (1+|b|^{\alpha}) \; \|\cdot\|_{(b)}$ for every $b\in \mathbb R$.
\smallskip

Let $\mathcal{P}=\{{P}_1,\ldots,{P}_m\}$ be the finite Markov partition associated to the $C^{1+\alpha}$ piecewise expanding map $T$ defined on $J$. 
For each $s=\sigma+ib\in\mathbb{C}$ and each piecewise H\"older continuous potential $\phi$ one can 
associate the \textit{twisted transfer operator} $P_s: C^\alpha(J)\to C^\alpha(J)$ defined as
\begin{eqnarray}\label{def:P-twisted}
(P_s v)(x) 
	=\sum_{y\in T^{-1}(x)}e^{(\phi-s r)(y)}v(y), \quad \forall x\in J. 
\end{eqnarray}
Writing the Birkhoff sums $S_n (\phi-sr)(x)=\sum_{i=0}^{n-1}(\phi-sr)(T^i(x))$ we obtain, inductively, 
\begin{eqnarray*}
(P_s^nv)(x)=\sum_{y\in T^{-n}(x)}e^{S_n (\phi-sr)(y)}v(y)
	=\sum_{\omega\in\mathcal{P}^{(n)}}(e^{S_n(\phi-sr)}v)\circ h_{\omega}\cdot \mathbf{1}_{T^n\omega}, 
	\quad \forall n\geqslant 1.
\end{eqnarray*}

\begin{remark}\label{perron}
\emph{
The spectral properties for the twisted transfer operators $P_{\sigma}:C^\alpha(J)\to C^\alpha(J)$ rely on Ruelle's theorem 
(cf. \cite[Theorem~2]{Pol84} and  \cite[Proposition~1]{Pol85}): for each $\sigma\in\mathbb{R}$ there exist $\lambda_{\sigma}>0$, a probability measure $\nu_{\sigma}$ and a real valued positive eigenfunction $f_{\sigma}\in C^\alpha(J)$ which is bounded away from zero so that: (i)  $P_{\sigma}f_{\sigma}=\lambda_{\sigma} f_{\sigma}$; (ii) $P^{*}_{\sigma}\nu=\lambda_{\sigma}\nu_{\sigma}$, where $P_{\sigma}^{*}$ represents the dual operator of $P_{\sigma}$; 
and (iii) $\mu_{\sigma}:=f_{\sigma}\nu_{\sigma}$ is the unique equilibrium state for $T$ with respect to the potential 
$\phi-\sigma r$.
Moreover, both probabilities $\nu_{\sigma}$ and $\mu_\sigma$ are Gibbs measures and are non-singular with respect to $T$, and their Jacobians are $J_{\nu_{\sigma}}T=\lambda_{\sigma} e^{-(\phi-\sigma r)}$ and $J_{\mu_{\sigma}}T=\lambda_{\sigma} e^{-(\phi-\sigma r)}\dfrac{f_{\sigma}\circ T}{f_{\sigma}}$, respectively.
}
\end{remark}

We may define now the family of \textit{normalized transfer operators}
\begin{eqnarray}\label{def:P-twisted-normalized}
 {\mathscr L}_sv=(\lambda_{\sigma}f_{\sigma})^{-1}P_s(f_{\sigma}v),
\end{eqnarray}
associated to parameters $s=\sigma+ib\in\mathbb{C}$.
Observe that ${\mathscr L}_\sigma 1=1$ for all $\sigma\in \mathbb R$ and $|{\mathscr L}_s|_{L^{\infty}(\mu_{\sigma})}
\leqslant 1$ for all such values of $s$. 
It will be useful to write the iterates of ${\mathscr L}_s$ as
\begin{eqnarray*}
({\mathscr L}^n_sv)(x)=\lambda_{\sigma}^{-n}f^{-1}_{\sigma}(x)\, \sum_{\omega\in \mathcal{P}^{(n)}} e^{S_n (\phi-sr)(h_{\omega}(x))}(f_{\sigma}v)(h_{\omega}(x)),
\end{eqnarray*}
where $h_{\omega}:=(T^n|_{\omega})^{-1}$ represents the inverse branch of $T^n$ on $\omega$. Furthermore we can relate iterates of both operators by $P^n_sv=\lambda_{\sigma}^nf_{\sigma}{\mathscr L}^n_s(f^{-1}_{\sigma}v)$, for every $n\geqslant 1$.

\begin{remark}\label{pertubationop}
\emph{
Given $s=\sigma+ib$, the classical perturbation theory ensures that one may find $\varepsilon\in(0,1)$ so that the family $s\mapsto P_s$ of operators on $C^{\alpha}(J)$ is continuous on $\{\sigma>-\varepsilon\}$, and $\sup_{|\sigma|<\varepsilon} \|P_s\|_b<\infty$ (see e.g. \cite[Proposition 2.5]{AM}). Reducing $\vep$, if necessary, we may assume $1\leqslant|f_{\sigma}|_{\infty}|f^{-1}_{\sigma}|_{\infty}\leqslant 4|f_0|_{\infty}|\frac1{f_0}|_{\infty}<\infty$ and $|f_{\sigma}|_{\alpha}|f^{-1}_{\sigma}|_{\infty}\leqslant 4|f_0|_{\alpha}|\frac1{f_0}|_{\infty}<\infty$. By Ruelle's theorem there exists a simple leading eigenvalue 
$\lambda_0>0$ 
with strictly positive $C^{\alpha}$ eigenfunction $f_0$ associated to the unperturbed transfer operator $P_0$. In particular, there exists 
$\varepsilon>0$ such that the maps $(-\vep,\vep) \ni \sigma \mapsto \lambda_{\sigma}$ and  $(-\vep,\vep) \ni \sigma \mapsto f_{\sigma}$ are continuous.
}
\end{remark}

In the remaining of this subsection we will provide estimates on the behavior of these twisted transfer operators, including 
a Lasota-Yorke inequality. 

\begin{lemma}\label{l-hypothesis}
For any $\sigma_0>0$ there exists $C_6>0$ so that
$
\lambda_{\sigma}^{-n} \sum_{\omega\in \mathcal{P}^{(n)}} \|e^{S_n(\phi-\sigma r)}\|_{L^{\infty}(\mu_{\sigma}|_{\omega})} \leqslant C_6,
$
for every $|\sigma|\leqslant \sigma_0$. 
In particular, for all $n\in\mathbb{N},\,v\in L^{\infty}(\mu_\sigma)$ and $s=\sigma+it$ with $|\sigma|\leqslant \sigma_0$,
$$
\|\mathscr{L}^n_sv\|_{L^{\infty}(\mu_\sigma)}\le C_6 \|v\|_{L^{\infty}(\mu_\sigma)}
\quad\text{and}\quad
\|P^n_sv\|_{L^{\infty}(\mu_\sigma)}\le C_6 \lambda_\sigma^n \|v\|_{L^{\infty}(\mu_\sigma)}.
$$
\end{lemma}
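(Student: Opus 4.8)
The plan is to read the first (summatory) bound directly off the Gibbs property of the equilibrium state $\mu_\sigma$, being careful that the constant involved is uniform for $|\sigma|\leqslant\sigma_0$. Recall from Remark~\ref{perron} that $\mu_\sigma$ is the unique equilibrium state of $T$ for the piecewise H\"older potential $\phi-\sigma r$, that it is a Gibbs measure in the sense of \eqref{eq:Gibbs}, and that its topological pressure equals $\log\lambda_\sigma$. Hence there is $C_5=C_5(\sigma)>0$ with $\lambda_\sigma^{-n}\exp\!\big(S_n(\phi-\sigma r)(y)\big)\leqslant C_5\,\mu_\sigma(\omega)$ for all $n\geqslant 1$, $\omega\in\mathcal{P}^{(n)}$ and $y\in\omega$. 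Since each $\omega$ is an interval on which $S_n(\phi-\sigma r)$ is continuous and $\mu_\sigma$ is fully supported, the essential supremum over $\omega$ equals the ordinary supremum, so $\lambda_\sigma^{-n}\|e^{S_n(\phi-\sigma r)}\|_{L^\infty(\mu_\sigma|_\omega)}\leqslant C_5\,\mu_\sigma(\omega)$; summing over $\omega\in\mathcal{P}^{(n)}$ and using $\sum_\omega\mu_\sigma(\omega)=1$ yields the asserted inequality with $C_6=C_5$.

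The one delicate point, and what I expect to be the main obstacle, is the uniformity of $C_5$ over the compact set $\{|\sigma|\leqslant\sigma_0\}$. I would obtain it by tracking the data entering the Gibbs estimate: the bounded distortion constant of the Birkhoff sums $S_n(\phi-\sigma r)$ along inverse branches depends only on $|\phi|_\alpha$, $|r|_\alpha$, $\sigma_0$, the expansion rate $\lambda$ and $\diam J$ (via \eqref{h0} and $\sum_{j\geqslant 0}\lambda^{-j\alpha}<\infty$), while the a priori bounds $\inf f_\sigma>0$ and $\sup f_\sigma<\infty$ on the Ruelle eigenfunction are controlled by the same distortion constant; alternatively one invokes the continuity of $\sigma\mapsto(\lambda_\sigma,f_\sigma)$ from Remark~\ref{perron} and compactness. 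A self-contained variant avoiding the Gibbs constant altogether is also available: from $\mathscr{L}_\sigma 1=1$ one gets $\lambda_\sigma^{-n}\sum_{\omega\in\mathcal{P}^{(n)}:\,x\in T^n\omega}e^{S_n(\phi-\sigma r)(h_\omega(x))}f_\sigma(h_\omega(x))=f_\sigma(x)$ for every $x\in J$, hence $\lambda_\sigma^{-n}\sum_{\omega:\,x\in T^n\omega}e^{S_n(\phi-\sigma r)(h_\omega(x))}\leqslant\sup f_\sigma/\inf f_\sigma$; then, fixing a point $x_i\in\operatorname{int}(P_i)$ for each $P_i\in\mathcal{P}$ and noting that every $\omega\in\mathcal{P}^{(n)}$ has $x_i\in T^n\omega$ for at least one $i$ by the Markov property, bounded distortion gives $\|e^{S_n(\phi-\sigma r)}\|_{L^\infty(\mu_\sigma|_\omega)}\leqslant e^{D}e^{S_n(\phi-\sigma r)(h_\omega(x_i))}$, and summing over $i=1,\dots,m$ produces the bound with $C_6=e^{D}\,m\,\sup_{|\sigma|\leqslant\sigma_0}(\sup f_\sigma/\inf f_\sigma)<\infty$.

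For the two ``in particular'' estimates I would argue directly from the iterate formulas. Since $\phi,r$ are real-valued and $s=\sigma+it$, one has $|e^{S_n(\phi-sr)}|=e^{S_n(\phi-\sigma r)}$, so
\[
|(P_s^nv)(x)|\leqslant\sum_{\omega\in\mathcal{P}^{(n)}}e^{S_n(\phi-\sigma r)(h_\omega(x))}\,|v(h_\omega(x))|\,\mathbf{1}_{T^n\omega}(x)\leqslant\|v\|_{L^\infty(\mu_\sigma)}\sum_{\omega\in\mathcal{P}^{(n)}}\big\|e^{S_n(\phi-\sigma r)}\big\|_{L^\infty(\mu_\sigma|_\omega)},
\]
and multiplying the first assertion by $\lambda_\sigma^{-n}$ gives $\|P_s^nv\|_{L^\infty(\mu_\sigma)}\leqslant C_6\,\lambda_\sigma^n\,\|v\|_{L^\infty(\mu_\sigma)}$. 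For $\mathscr{L}_s^n$ the cleanest route is the pointwise domination $|\mathscr{L}_s^nv|\leqslant\mathscr{L}_\sigma^n|v|\leqslant\|v\|_{L^\infty(\mu_\sigma)}\,\mathscr{L}_\sigma^n 1=\|v\|_{L^\infty(\mu_\sigma)}$, so that bound holds even with constant $1$; it also follows, with constant $C_6\sup_{|\sigma|\leqslant\sigma_0}\|f_\sigma\|_\infty\|f_\sigma^{-1}\|_\infty$, from $\mathscr{L}_s^nv=\lambda_\sigma^{-n}f_\sigma^{-1}P_s^n(f_\sigma v)$. I expect the genuine content of the lemma to be the first summatory bound together with its uniformity in $\sigma$; the rest is bookkeeping.
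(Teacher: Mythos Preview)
Your proposal is correct, and your ``self-contained variant'' is precisely the paper's argument: the paper uses $\mathscr{L}_\sigma^n\mathbf{1}=\mathbf{1}$ to get $\lambda_\sigma^{-n}\sum_{\omega}e^{S_n(\phi-\sigma r)(h_\omega x)}\leqslant |f_\sigma|_\infty|f_\sigma^{-1}|_\infty$ pointwise, then bounded distortion along inverse branches to pass to the $L^\infty(\omega)$-norm, arriving at $C_6=\max_{|\sigma|\leqslant\sigma_0}e^{(1-\lambda^{-\alpha})^{-1}|\phi-\sigma r|_\alpha\diam(J)^\alpha}|f_\sigma|_\infty|f_\sigma^{-1}|_\infty$. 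You are actually more careful than the paper on one point: since $T$ is not assumed full branch, a fixed $x$ need not lie in $T^n\omega$ for every $\omega\in\mathcal{P}^{(n)}$, and your device of choosing one $x_i$ per $P_i\in\mathcal{P}$ and summing over $i$ (at the cost of the harmless factor $m$) closes that gap cleanly; the paper's write-up glosses over this.

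Your primary route via the Gibbs inequality \eqref{eq:Gibbs} is a genuine alternative: it bypasses the eigenfunction $f_\sigma$ entirely and reads the bound straight off $\lambda_\sigma^{-n}e^{S_n(\phi-\sigma r)(y)}\leqslant C_5\,\mu_\sigma(\omega)$ and $\sum_\omega\mu_\sigma(\omega)=1$. This is arguably more transparent, but pushes the uniformity in $\sigma$ into the Gibbs constant $C_5(\sigma)$, which you then have to control---ultimately via the same bounded-distortion and eigenfunction estimates the paper uses directly. So the two routes have the same content; the paper's is shorter because it does not pass through the Gibbs constant. Your observation that $|\mathscr{L}_s^n v|\leqslant\mathscr{L}_\sigma^n|v|\leqslant\|v\|_{L^\infty}$ already gives the $\mathscr{L}_s^n$ bound with constant $1$ is correct and sharper than the stated $C_6$.
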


\begin{proof}
Using that $\mathbf 1={\mathscr L}_\sigma^n \mathbf 1=\lambda_\sigma^{-n} \frac{1}{f_\sigma}\sum_{\omega\in\cP^{(n)}}(e^{S_n(\phi-\sigma r)}f_\sigma)\circ h_{\omega}$ we get
\begin{eqnarray*}
0\leqslant \lambda_\sigma^{-n} \sum_{\omega\in\cP^{(n)}}e^{S_n(\phi-\sigma r)(h_{\omega}x)}
	&=& \lambda_\sigma^{-n}  \frac{f_\sigma(x)}{f_\sigma(x)}\sum_{\omega\in\cP^{(n)}} e^{S_n(\phi-\sigma r)(h_{\omega}x)}\frac{f_\sigma(h_{\omega}x)}{f_\sigma(h_{\omega}x)}\\
	&\leqslant & |f_\sigma|_{\infty}\Big|\frac1{f_\sigma}\Big|_{\infty}
	{\mathscr L}_\sigma^n \mathbf 1.	
\end{eqnarray*}
Hence, the right-hand side above is bounded by $\max_{\sigma\in [-\sigma_0,\sigma_0]} |f_\sigma|_{\infty}\,|\frac1{f_\sigma}|_{\infty}$, for every $x\in J$
and every $|\sigma|\leqslant \sigma_0$. 
Moreover, using the uniform backward contraction for $T$, we have the following bounded distortion estimate: $|S_n(\phi-\sigma r)(h_{\omega}x) -S_n(\phi-\sigma r)(h_{\omega}y)| \leqslant \frac{1}{1-\lambda^{-\alpha}} \, |\phi-\sigma r|_\alpha\,\diam(J)^\alpha$ for every $x,y$ in the domain of $h_\omega$.
Together with the previous estimate, this implies 
\begin{align*}
\lambda_{\sigma}^{-n} \sum_{\omega\in \mathcal{P}^{(n)}} \|e^{S_n(\phi-\sigma r)}\|_{L^{\infty}(\mu_{\sigma}|_{\omega})} 
	& \leqslant e^{\frac{1}{1-\lambda^{-\alpha}} \, |\phi-\sigma r|_\alpha\,\diam(J)^\alpha} 
		\lambda_{\sigma}^{-n}  \sum_{\omega\in\cP^{(n)}}e^{S_n(\phi-\sigma r)(h_{\omega}x)} 
	 \leqslant C_6
\end{align*}
where $C_6:=\max_{|\sigma|\leqslant \sigma_0} e^{\frac{1}{1-\lambda^{-\alpha}} \, |\phi-\sigma r|_\alpha\,\diam(J)^\alpha} \, |f_\sigma|_{\infty}\Big|\frac1{f_\sigma}\Big|_{\infty}$.
As the second claim in the lemma is an easy consequence of the first one, this proves the lemma. 
\end{proof}

\begin{lemma}\label{lemma1}
There exists {$C_7 \geqslant (1-\lambda^{-1})C_4$} such that $|D(S_nr\circ h_{\omega})(x)| \leqslant \frac{1}{2}C_7$ for all $n\in\mathbb{N},\,\omega\in\mathcal{P}^{(n)}$ and $x\in T^n\omega$.
\end{lemma}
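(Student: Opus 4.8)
The plan is to expand $S_n r\circ h_\omega=\sum_{i=0}^{n-1} r\circ T^i\circ h_\omega$, bound each summand by isolating a single one-step inverse branch, and sum the resulting geometric series. Fix $\omega\in\mathcal P^{(n)}$ and, for $0\le i\le n$, set $\omega_i:=T^i(\omega)$; by the Markov property $\omega_i\in\mathcal P^{(n-i)}$, so that $\omega_0=\omega$, $\omega_n=T^n\omega$, and for $i\le n-1$ the set $\omega_i$ lies inside a single element $P^{(i)}\in\mathcal P$ while $T|_{\omega_i}\colon\omega_i\to\omega_{i+1}$ is a bijection. Writing $h_{\omega,i}:=(T^{n-i}|_{\omega_i})^{-1}=T^i\circ h_\omega\in\mathcal H_{n-i}$ (so $h_{\omega,0}=h_\omega$ and $h_{\omega,n}=\mathrm{id}$) and $h^{(i)}:=(T|_{P^{(i)}})^{-1}\in\mathcal H$, one obtains the factorization $h_{\omega,i}=h^{(i)}\circ h_{\omega,i+1}$ on $T^n\omega$, and hence $r\circ T^i\circ h_\omega=(r\circ h^{(i)})\circ h_{\omega,i+1}$.

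Next I would apply the chain rule together with the two estimates at hand: hypothesis \eqref{h1} gives $|D(r\circ h^{(i)})|\le C_4$, while $h_{\omega,i+1}$ is an inverse branch of $T^{n-i-1}$, so \eqref{h-1} yields $|Dh_{\omega,i+1}(x)|=\|(DT^{n-i-1}(h_{\omega,i+1}(x)))^{-1}\|\le\lambda^{-(n-i-1)}$ for every $x\in T^n\omega$ (the case $i=n-1$, where $h_{\omega,i+1}=\mathrm{id}$ and the bound reads $\le\lambda^{0}=1$, being consistent). Therefore $|D(r\circ T^i\circ h_\omega)(x)|\le C_4\,\lambda^{-(n-1-i)}$, and summing over $0\le i\le n-1$ gives
\[
|D(S_n r\circ h_\omega)(x)|\ \le\ C_4\sum_{i=0}^{n-1}\lambda^{-(n-1-i)}\ \le\ C_4\sum_{k\ge 0}\lambda^{-k}\ =\ \frac{C_4}{1-\lambda^{-1}},
\]
uniformly in $n$, $\omega$ and $x\in T^n\omega$. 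Taking $C_7:=\dfrac{2C_4}{1-\lambda^{-1}}$ then yields the claimed bound $|D(S_n r\circ h_\omega)(x)|\le\tfrac12 C_7$; since $\lambda>1$, this choice automatically satisfies $C_7\ge(1-\lambda^{-1})C_4$.

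The computation is essentially routine, so I do not expect a genuine obstacle. The one point that requires care is the bookkeeping of the inverse branches, and in particular the use of the Markov property to ensure that each $T^i(\omega)$ is contained in a single element of $\mathcal P$, so that \eqref{h1}---stated only for one-step inverse branches in $\mathcal H$---may legitimately be invoked; the degenerate case $i=n-1$ (where the remaining inverse branch is the identity) should also be checked, but causes no difficulty.
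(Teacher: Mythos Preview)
Your proof is correct and follows essentially the same approach as the paper: expand $S_n r\circ h_\omega$ term by term, isolate a one-step inverse branch so that \eqref{h1} applies, bound the remaining inverse-branch derivative by \eqref{h-1}, and sum the resulting geometric series. Your factorization $T^i\circ h_\omega = h^{(i)}\circ h_{\omega,i+1}$ is slightly cleaner than the paper's device of inserting $DT(T^ky)^{-1}DT(T^ky)$, and consequently your constant $C_7=2C_4/(1-\lambda^{-1})$ avoids the extra factor of $\rho$ that appears in the paper's choice.
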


\begin{proof}
Fix $n\in\mathbb{N},\,\omega\in\mathcal{P}^{(n)}$ and $x\in T^n\omega$, and
set $y=h_{\omega}(x)$, where $h_\omega$ is the inverse branch of $T^n\mid_\omega$. By the chain rule,
$
D(S_nr\circ h_{\omega})(x)=\sum^{n-1}_{k=0}Dr(T^ky)D(T^k\circ h_{\omega})(T^ny).
$
As one can write
\begin{eqnarray*}
Dr(T^ky)D(T^k\circ h_{\omega})(T^{n}y)=Dr(T^ky)DT(T^ky)^{-1}DT(T^ky)D(T^k\circ h_{\omega})(T^{n}y)
\end{eqnarray*}
for each $0\leqslant k \leqslant n-1$, assumption \eqref{h1}, inequality \eqref{h-1} and the fact that $\|DT(z)\|\leqslant \rho$ for every $z\in J$ ensure that
$
|D(S_nr\circ h_{\omega})(x)\|\leqslant C_4{\rho}\sum^{n-1}_{k=0}\lambda^{-(n-k)}\leqslant \frac{1}{2}C_7,
$
where the constant {$C_7:=\max\{2C_4{{\rho}}{(1-\lambda^{-1})^{-1}}, (1-\lambda^{-1})C_4\} >0$ } is independent of $n, \omega$ and $x$. 
\end{proof}

\subsection{Lasota-Yorke inequality}
We will make use of the following instrumental estimates.

\begin{proposition}[Lasota-Yorke inequality]\label{L-Y0}
There exists $\sigma_0>0$ and $C_8>0$ so that, for each $s=\sigma+ib,\,|\sigma|<\sigma_0,\,v\in {C}^{\alpha}(J)$ and $n\in\mathbb{N}$, the following properties hold:
\begin{enumerate}
\item\label{L-Y1} $
	|{\mathscr L}_s^nv|_{{C}^{\alpha}(J)}\leqslant  
	C_8 \lambda^{-\alpha n} |v|_{{C}^{\alpha}(\mu_\sigma)} + C_8 (1+|b|^{\alpha})\|v\|_{L^{\infty}(\mu_\sigma)}$;
\item\label{L-Y2} $ 
	|{\mathscr L}_s^nv|_{(b)}\leqslant  
	C_8 \lambda^{-\alpha n}\|v\|_{(b)}+ C_8 \|v\|_{L^{\infty}(\mu_\sigma)}$.
\end{enumerate}
\end{proposition}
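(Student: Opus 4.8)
The plan is to prove the two inequalities in Proposition~\ref{L-Y0} by a standard splitting of the Hölder seminorm estimate into a ``contraction of oscillations along inverse branches'' part and a ``transfer of the sup-norm'' part, using the normalized operators $\mathscr L_s$ so that the leading eigenvalue cancels and the positive operator $\mathscr L_\sigma$ applied to $\mathbf 1$ gives $\mathbf 1$. First I would fix $\sigma_0>0$ and work with $|\sigma|\le\sigma_0$ throughout; the constants $|f_\sigma|_\infty$, $|f_\sigma^{-1}|_\infty$, $|f_\sigma|_\alpha$, $\lambda_\sigma$ are all controlled uniformly on this range by Remark~\ref{pertubationop}, and the bounded distortion estimate $|S_n(\phi-\sigma r)(h_\omega x)-S_n(\phi-\sigma r)(h_\omega y)|\le \frac{1}{1-\lambda^{-\alpha}}|\phi-\sigma r|_\alpha\diam(J)^\alpha$ from the proof of Lemma~\ref{l-hypothesis} holds uniformly. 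The key pointwise object is, for $x,y$ in the same connected component,
\[
(\mathscr L_s^n v)(x) - (\mathscr L_s^n v)(y)
	= \lambda_\sigma^{-n}\sum_{\omega\in\cP^{(n)}}\Big[ e^{S_n(\phi-sr)(h_\omega x)}\tfrac{(f_\sigma v)(h_\omega x)}{f_\sigma(x)} - e^{S_n(\phi-sr)(h_\omega y)}\tfrac{(f_\sigma v)(h_\omega y)}{f_\sigma(y)}\Big],
\]
which I would decompose, term by term, into (a) the difference coming from $v\circ h_\omega$, (b) the difference coming from the weight $e^{S_n(\phi-sr)\circ h_\omega}$, and (c) the difference coming from the $f_\sigma^{-1}(x)$ versus $f_\sigma^{-1}(y)$ prefactor and the $f_\sigma\circ h_\omega$ factor.

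For term (a) I would use that $h_\omega$ contracts distances by $\lambda^{-n}$ (hypothesis~\eqref{h-1}), so $|v(h_\omega x)-v(h_\omega y)|\le |v|_{C^\alpha(J)}\,\lambda^{-\alpha n}\,d(x,y)^\alpha$; summing against $\lambda_\sigma^{-n}\sum_\omega \|e^{S_n(\phi-\sigma r)}\|_{L^\infty(\mu_\sigma|_\omega)}\le C_6$ (Lemma~\ref{l-hypothesis}) and the uniform bounds on $f_\sigma,f_\sigma^{-1}$, this contributes the $C_8\lambda^{-\alpha n}|v|_{C^\alpha}$ term. For terms (b) and (c) I would bound the relevant Hölder increments of the weight and of $f_\sigma$; here the increment of $e^{S_n(\phi-sr)\circ h_\omega}$ picks up a factor proportional to $|S_n(\phi-sr)\circ h_\omega|_\alpha$, and the only part of this that is not uniformly bounded in $n$ is the one carrying $|b|$: writing $S_n(\phi-sr) = S_n(\phi-\sigma r) - ib\,S_n r$, the real part has a uniformly bounded Hölder constant on each $h_\omega$-image (by the distortion estimate and~\eqref{h1}, $D(S_n r\circ h_\omega)$ being bounded by Lemma~\ref{lemma1}), while the imaginary part contributes $|b|\cdot|S_n r\circ h_\omega|_{\mathrm{Lip}}\le |b|\cdot\tfrac12 C_7$ by Lemma~\ref{lemma1}, i.e.\ a factor $\lesssim (1+|b|)\,d(x,y)^\alpha$ after interpolating the Lipschitz bound with the trivial bound against $\diam(J)^\alpha$. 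All of these ``leftover'' terms are multiplied only by $\|v\|_{L^\infty}$ (the magnitude of the terms, not their oscillation), and after summing against the $C_6$-bound one gets $C_8(1+|b|^\alpha)\|v\|_{L^\infty(\mu_\sigma)}$, which is exactly item~\eqref{L-Y1}. Item~\eqref{L-Y2} then follows by dividing~\eqref{L-Y1} through by $1+|b|^\alpha$, using $|v|_{C^\alpha(\mu_\sigma)}/(1+|b|^\alpha)\le\|v\|_{(b)}$, and adding $\|\mathscr L_s^n v\|_{L^\infty(\mu_\sigma)}\le C_6\|v\|_{L^\infty(\mu_\sigma)}$ from Lemma~\ref{l-hypothesis} to pass from the seminorm to the $\|\cdot\|_{(b)}$-norm; one absorbs $C_6$ into a larger $C_8$ and chooses $n$ large enough (or just keeps $C_8\lambda^{-\alpha n}$ as is, since the statement is for all $n$).

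The step I expect to be the main technical obstacle is handling the increment of the complex weight $e^{S_n(\phi-sr)\circ h_\omega}$ cleanly: one must estimate $\big|e^{z_1}-e^{z_2}\big|\le |z_1-z_2|\,\max(e^{\Re z_1},e^{\Re z_2})$ with $z_j = S_n(\phi-sr)(h_\omega(\cdot))$, track that the $|b|$-dependence enters linearly and only through $\Im$, interpolate the Lipschitz-in-$n$ bound on $S_n r\circ h_\omega$ against the $\alpha$-Hölder scale to get the $d(x,y)^\alpha$ and the factor $|b|^\alpha$ (rather than $|b|$) that appears in the statement, and confirm that the real-part factor $e^{\Re z_j} = e^{S_n(\phi-\sigma r)(h_\omega(\cdot))}$ is exactly what gets absorbed by the $C_6$ bound of Lemma~\ref{l-hypothesis}. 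A secondary bookkeeping point is that $J$ is disconnected, so every Hölder increment is only meaningful for $x,y$ in the same component; since $h_\omega$ maps a connected piece of $T^n\omega$ into the single Markov domain containing $\omega$, this is compatible with all the estimates, but it must be stated. Once these increment estimates are in place, the rest is routine summation and re-labeling of constants.
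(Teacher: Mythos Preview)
Your proposal is correct and follows essentially the same approach as the paper. The paper organizes the splitting slightly differently --- decomposing the weight into four pieces (the $e^{-ibS_nr}$, $e^{-\sigma S_nr}$, $v$, and $e^{S_n\phi}$ factors) rather than your three-way split, and leaving the $f_\sigma$ factors implicit --- but the key step is identical: the inequality $|e^{it}-1|\le 2\min\{1,|t|\}\le 2|t|^\alpha$ applied to $t=b\big(S_nr(h_\omega x)-S_nr(h_\omega y)\big)$ together with Lemma~\ref{lemma1} produces the $|b|^\alpha$ factor, and the branch sum is controlled by the $C_6$ bound of Lemma~\ref{l-hypothesis}; item~\eqref{L-Y2} is then deduced from item~\eqref{L-Y1} exactly as you describe.
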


\begin{proof}
Fix $n\geqslant 1$, $\omega\in\cP^{(n)}, v\in{C}^{\alpha}(J)$ and $x, y\in T^n\omega, x\neq y$. We write
\begin{align}
(e^{S_n(\phi-sr)}v)(h_{\omega}x)-(e^{S_n(\phi-sr)}v)(h_{\omega}y)
&=(e^{-ibS_nr(h_{\omega}x)}-e^{-ibS_nr(h_{\omega}y)})(e^{S_n(\phi-\sigma r)}v)(h_{\omega}x) \label{LY1}\\
&+e^{-ibS_nr(h_{\omega}y)}(e^{-\sigma S_nr(h_{\omega}x)}-e^{-\sigma S_nr(h_{\omega}y)})(ve^{S_n\phi})(h_{\omega}x)
	\label{LY2}\\
&+e^{-sS_nr(h_{\omega}y)} [v(h_{\omega}x)-v(h_{\omega}y)] e^{S_n\phi(h_{\omega}x)} \label{LY3}\\
&+e^{-sS_nr(h_{\omega}y)}v(h_{\omega}y)(e^{S_n\phi(h_{\omega}x)}-e^{S_n\phi(h_{\omega}y)}).\label{LY4}
\end{align}
Using the inequality $|e^{it}-1|\leqslant 2\min\{1, |t|\}\leqslant 2|t|^{\alpha}$ valid for all $t\in\mathbb{R}$ together with 
Lemma~\ref{lemma1}, we can bound \eqref{LY1} as
\begin{align*}
|(e^{-ibS_nr(h_{\omega}x)} &-e^{-ibS_nr(h_{\omega}y)}) \; (e^{S_n(\phi-\sigma r)}v)(h_{\omega}x)| \\
	&\leqslant 2\min\big\{1, |b| \, |S_nr(h_{\omega}x)-S_nr(h_{\omega}y)|\big\} \; (e^{S_n(\phi-\sigma r)}|v|)(h_{\omega}x)\\
	&\leqslant 2\min\big\{1, |b|\frac{C_7}{2}d(x,y)\big\}(e^{S_n(\phi-\sigma r)}|v|)(h_{\omega}x)\\
	&\leqslant 2^{1-\alpha} C_7^\alpha \,|b|^{\alpha} \, (e^{S_n(\phi-\sigma r)}|v|)(h_{\omega}x) \, d(x,y)^{\alpha}.
\end{align*}
Similarly, as $|e^{-\sigma t}-1| \leqslant 2|\sigma| t$ for every $t>0$,
the term \eqref{LY2} 
is bounded by 
\begin{eqnarray*}
|(e^{-\sigma S_nr(h_{\omega}x)}-e^{-\sigma S_nr(h_{\omega}y)})(ve^{S_n\phi})(h_{\omega}x)|
	&= &(e^{S_n(\phi-\sigma r)}|v|)(h_{\omega}x)|e^{-\sigma (S_nr(h_{\omega}x)-S_nr(h_{\omega}y))}-1|\\
	&\leqslant &C_7 |\sigma|\, (e^{S_n(\phi-\sigma r)}|v|)(h_{\omega}x) \, d(x,y).
\end{eqnarray*}
Concerning the term \eqref{LY3}, using $\|(DT^n(x))^{-1}\|\leqslant \lambda ^{-n}$ for all $x\in J$ and $n\in\mathbb{N}$, that $\sigma > -\sigma_0$ and the previous ideas, it is simple to check it is bounded above by
\begin{align*}
e^{S_n(\phi-\sigma r)(h_{\omega}x)}\, & |e^{-\sigma [S_nr(h_{\omega}y)-S_nr(h_{\omega}x)]}|\, \lambda^{-\alpha n} |v|_{{C}^{\alpha}(J)} d(x,y)^\alpha \\
	& \leqslant \lambda^{-\alpha n}  |v|_{{C}^{\alpha}(J)} {e^{\sigma_0\frac{C_7}{2}\diam(J)}} \, (e^{S_n(\phi-\sigma r)})(h_{\omega}x)\,d(x,y)^{\alpha}.
\end{align*}
Finally, we can estimate \eqref{LY4} along the same lines, obtaining
\begin{eqnarray*}
|e^{-sS_nr(h_{\omega}y)}v(h_{\omega}y)(e^{S_n\phi(h_{\omega}x)}-e^{S_n\phi(h_{\omega}y)})|&\leqslant & |S_n\phi(h_{\omega}x)-S_n\phi(h_{\omega}y)|(e^{S_n(\phi-\sigma r)}|v|)(h_{\omega}y)\\
&\leqslant& \frac{1}{1-\lambda^{-\alpha}} |\phi|_{{C}^{\alpha}(J)} \, (e^{S_n(\phi-\sigma r)}|v|)(h_{\omega}y)\, d(x,y)^{\alpha}.
\end{eqnarray*}
Summing over all $\omega\in\cP^{n}$ we conclude that 
$\dfrac{|{\mathscr L}_s^n v(x)-{\mathscr L}_s^n v(y)|}{d(x,y)^{\alpha}}$ is bounded by
\begin{eqnarray*} 
	\lambda_\sigma^{-n} 
\sum_{\omega\in \mathcal{P}^{(n)}} \|e^{S_n(\phi-\sigma r)}\|_{L^{\infty}(\mu_{\sigma}|_{\omega})} \cdot 
\left[
	\left(
\frac{2}{2^\alpha}C_7|b|^{\alpha}+{C_7} |\sigma|+\frac{|\phi|_{{C}^{\alpha}(J)}}{1-\lambda^{-\alpha}}
	\right)
\|v\|_{L^{\infty}(\mu_{\sigma})}
	+{e^{\sigma_0\frac{C_7}{2}\diam(J)}} \lambda^{-\alpha n}|v|_{C^{\alpha}(J)}
\right].
\end{eqnarray*}
Lemma~\ref{l-hypothesis} guarantees that the first component in the product
is bounded by a uniform constant $C_6$. Since $x\neq y$ are arbitrary 
in a same connected component of $J$, there exists $C_8>0$ (independent of $b$) so that 
$$
|{\mathscr L}_s^n v|_{C^{\alpha}(J)}
	\leqslant C_8 \lambda^{-\alpha n}|v|_{C^{\alpha}(J)} + C_8(1+|b|^{\alpha}) \, \|v\|_{L^{\infty}(\mu_{\sigma})} 
$$
This proves item \eqref{L-Y1}.

Concerning inequality \eqref{L-Y2}, it follows from the first inequality \eqref{L-Y1}, the definition of the $\|\cdot\|_{(b)}$-norm (recall \eqref{eq:def-norm-b})
and the fact that 
$\|{\mathscr L}_s^n v\|_{L^\infty(\mu_\sigma)} \leqslant C_6 \|v\|_{L^\infty(\mu_\sigma)}$, that 
\begin{align*}
\| {\mathscr L}_s^n v \|_{(b)} 
& =\frac{1}{1+|b|^{\alpha}}|P_s^nv|_{C^{\alpha}(J)}+\|P_s^nv\|_{L^{\infty}(J)}	\\
	& \leqslant C_8 \lambda^{-\alpha n} \frac{1}{1+|b|^{\alpha}} |v|_{C^{\alpha}(J)} + (C_6+C_8) \, \|v\|_{L^{\infty}(\mu_{\sigma})}, 
\end{align*}
which proves the lemma (taking a slightly larger constant $C_8$ than defined in the first item).

\end{proof}

\section{Transversality and uniform nonintegrability condition}\label{sec:UNI-transversal}

After the breakthtrough contribution of Dolgopyat \cite{D} using the uniform non-integrability condition (introduced by Chernov) to prove exponential decay of correlations for hyperbolic flows. 
In  \cite{BW} Butterley and War constructed open sets of codimension one exponentially mixing Anosov flows using a transversality condition, which 
holds whenever the stable and unstable bundles are not jointly integrable and it
is sufficient to ensure the exponential mixing for the SRB measure.  The van der Corput lemma which garantees cancellations in \cite{BW} seems
unlikely to hold for other Gibbs measures. For that reason, after adapting the concept of transversality in order to deal with other Gibbs measures,
we need to obtain cancellations using a uniform nonintegrability condition as in \cite{AM,AGY,BV}. In opposition to these other contexts, the uniform integrability  
condition will be fitted to Markov maps which are not necessarily full branch.

In what follows, let $T$ be a $C^{1+\alpha}$ piecewise expanding map defined on $J$ and $r: J \to \mathbb R_+$ be a roof function.  For each $n\geqslant 1$ let $\mathcal{P}^{(n)}$ be the n-th refinement of the Markov partition $\cP$.

\subsection{Uniform non-integrability and cancellations in uniform scales}

Let us recall the uniform non-integrability condition (UNI) on the roof function, which is used to measure some shear along 
the flow direction. In the special case that $T$ is a piecewise expanding and full branch Markov map one can define 
\begin{eqnarray}\label{eq:def-psi}
\psi_{h_1,h_2}:=S_n r \circ h_1- S_n r\circ h_2:J\to\mathbb{R}
	\quad \text{for each pair $h_1,h_2\in\mathcal{H}_n$,}
\end{eqnarray}
where $S_n r=\sum^{n-1}_{j=0} r\circ T^j$ and $h_j:=h_{\om_j}$ with $\om_j\in \mathcal P^{(n)}$ for $j=1,2$.
More generally, $\mbox{Dom}(\psi_{h_\om,h_{\tilde\om}})= T^n(\om) \cap T^n(\tilde\om)$ 
for all $\om,\tilde\om\in \mathcal P^{(n)}$.

\smallskip
\begin{definition}\label{def:UNI}
\emph{
The suspension semiflow $(X_t)_{t\geqslant 0}$ over a piecewise expanding and full branch Markov map $T$ with roof function $r$ satisfies the 
\emph{uniform non-integrability (UNI) condition}
if there exists $D>0$ and inverse branches $h_1,h_2\in\mathcal{H}_{n_0}$ for some sufficiently large integer $n_0\geqslant 1$, such that $\inf_{x\in J}|\psi'_{h_1,h_2}(x)|\geqslant D.$
}
\end{definition}
\smallskip

Notably, a roof function $r$ over a full branch piecewise expanding map satisfies the UNI condition if and only the 
roof function is not $C^1$-cohomologous to a piecewise constant one (cf. \cite[Proposition~7.4]{AGY}). 
The previous condition is often used to obtain a contraction of the $L^2$-norm of the twisted transfer operators.
To make it precise it is useful to write the
iterates of $\mathscr L_s$ as
\begin{eqnarray}\label{defAs}
(\mathscr L^n_sv)(x)=\lambda_{\sigma}^{-n}f^{-1}_{\sigma}(x)\, \sum_{h\in \mathcal{H}_n} A_{s,h,n}(f_{\sigma}v)(x)
	\;\text{and}\;
	A_{s,h,n}(f_{\sigma}v)(x)=e^{S_n (\phi-sr)(h(x))}(f_{\sigma}v)(h(x))
\end{eqnarray}
for every $x\in J$, where $S_n (\phi-sr)$
denotes the usual Birkhoff sum for the potential $\phi-s r$. 
The key estimate is given by the following cancellation argument, whose proof is contained in \cite[Lemma~2.9]{AM}, where the argument does not involve the potential (see also \cite{DV}).

\begin{lemma}\label{lemm:cancel} 
Let $\eta_0=\frac{1}{2}(\sqrt{7}-1)\in(\frac{2}{3},1)$. Assume that the UNI condition is satisfied with constants $D>0$ and $n_0\geqslant 1$ and inverse branches $h_1, h_2\in\mathcal{H}_{n_0}$.
There exists $\Delta=2\pi/D$ and $0<\delta<\Delta$ such that for all $s=\sigma+ib$, $|\sigma|<\varepsilon$, $|b|>4\pi/D$, and all $(u,v)\in\mathcal{C}_b$ the following holds:
for every $x_0\in I$ there exists $x_1\in B(x_0, \Delta/|b|)$ such that one of the following inequalities holds on $B(x_1,\delta/|b|)$:
\begin{enumerate}
\item[Case $h_1$:]
$|A_{s,h_1,n_0}(f_{\sigma}v)+A_{s,h_2,n_0}(f_{\sigma}v)|\leqslant \eta_0 A_{\sigma,h_1,n_0}(f_{\sigma}u)+A_{\sigma,h_2,n_0}(f_{\sigma}u)$,
\item[Case $h_2$:]
$|A_{s,h_1,n_0}(f_{\sigma}v)+A_{s,h_2,n_0}(f_{\sigma}v)|\leqslant A_{\sigma,h_1,n_0}(f_{\sigma}u)+ \eta_0A_{\sigma,h_2,n_0}(f_{\sigma}u)$.
\end{enumerate}
\end{lemma}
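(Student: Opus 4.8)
The plan is to follow the classical Dolgopyat-type cancellation argument, as carried out in \cite[Lemma~2.9]{AM}, exploiting the uniform displacement of the two selected inverse branches $h_1, h_2 \in \mathcal H_{n_0}$. The essential point is that on the pair $(u,v)$ belonging to the cone $\mathcal C_b$ one has the a priori domination $|v| \leqslant u$ together with a Lipschitz-type control $|v(x) - v(y)| \leqslant \text{const}\,|b|\, u(x)\, d(x,y)$ and a comparable regularity estimate for $u$ itself; these are precisely the defining inequalities of the cone $\mathcal C_b$. First I would write, for each of the two branches $j = 1,2$, the complex quantity $A_{s,h_j,n_0}(f_\sigma v)(x) = e^{-ib\, S_{n_0} r(h_j(x))}\, \big[ e^{S_{n_0}(\phi - \sigma r)(h_j(x))} (f_\sigma v)(h_j(x))\big]$ and observe that the second factor is, up to a bounded multiplicative distortion, comparable to $A_{\sigma, h_j, n_0}(f_\sigma u)(x)$ because of the cone condition $|v| \leqslant u$. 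Thus the only obstruction to the triangle inequality $|A_{s,h_1}(f_\sigma v) + A_{s,h_2}(f_\sigma v)| \leqslant A_{\sigma,h_1}(f_\sigma u) + A_{\sigma,h_2}(f_\sigma u)$ being strict is the alignment of the two unit-modulus phases $e^{-ib\, S_{n_0}r(h_j(x))}$.

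The heart of the argument is therefore to produce, near any given $x_0$, a ball of radius $\delta/|b|$ on which those two phases are \emph{uniformly misaligned} by an angle bounded away from $0$ and $\pi$. This is where the UNI condition enters: the phase difference is $b\,\psi_{h_1,h_2}(x)$ with $\psi_{h_1,h_2} = S_{n_0} r\circ h_1 - S_{n_0} r\circ h_2$, and by hypothesis $|\psi'_{h_1,h_2}| \geqslant D$ on $J$. Hence as $x$ ranges over an interval of length $\Delta/|b| = 2\pi/(D|b|)$, the quantity $b\,\psi_{h_1,h_2}(x)$ sweeps an arc of length at least $2\pi$; by the mean value theorem (using one-dimensionality of the unstable direction) one can then select $x_1 \in B(x_0, \Delta/|b|)$ at which $b\,\psi_{h_1,h_2}(x_1)$ lies in a ``bad alignment'' sector, say within distance $\leqslant \pi/3$ of an odd multiple of $\pi$; and since $|\psi'_{h_1,h_2}| \leqslant \text{const}$ as well (by \eqref{h1}, cf. Lemma~\ref{lemma1}), this misalignment persists on the whole sub-ball $B(x_1, \delta/|b|)$ once $\delta$ is chosen small enough depending only on $D$ and the upper Lipschitz bound. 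On that sub-ball an elementary planar estimate $|z_1 e^{i\theta_1} + z_2 e^{i\theta_2}| \leqslant \max\{|z_1| + \eta_0|z_2|,\ \eta_0|z_1| + |z_2|\}$ whenever $\theta_1 - \theta_2$ is bounded away from $0$, valid with $\eta_0 = \tfrac12(\sqrt 7 - 1)$ for the specific sector chosen, gives one of the two stated inequalities — the choice of ``Case $h_1$'' versus ``Case $h_2$'' recording which of the two summands is dominant at $x_1$.

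Finally I would assemble the bounded-distortion bookkeeping: the factors $e^{S_{n_0}(\phi - \sigma r)}$ composed with $h_1$ and with $h_2$, and the eigenfunction $f_\sigma$, all have ratios between nearby points controlled uniformly (using \eqref{h0}, Lemma~\ref{l-hypothesis}, Remark~\ref{pertubationop}, and $|\sigma| < \varepsilon$), so passing from the pointwise inequality at $x_1$ to the desired inequality on $B(x_1, \delta/|b|)$ costs only a fixed multiplicative constant, which can be absorbed by shrinking $\delta$ once more or by slightly adjusting $\eta_0$ within $(\tfrac23, 1)$. The main obstacle, and the only genuinely delicate step, is calibrating the three constants $\Delta = 2\pi/D$, then $\delta$, and the misalignment sector so that (i) the sweep over $B(x_0,\Delta/|b|)$ is guaranteed to enter the bad sector, (ii) the distortion incurred over the smaller ball $B(x_1,\delta/|b|)$ does not destroy the gain, and (iii) the resulting loss factor is exactly $\eta_0$; this is the point where one must mirror carefully the constant-chasing of \cite[Lemma~2.9]{AM}, and where one-dimensionality of the unstable manifold is used through the mean value theorem.
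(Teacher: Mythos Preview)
Your overall strategy matches the paper's (which simply cites \cite[Lemma~2.9]{AM}), and the use of the UNI bound $|\psi'_{h_1,h_2}|\geqslant D$ to sweep the phase over a full period on $B(x_0,\Delta/|b|)$ is correct. However, there is a genuine gap in your identification of the relevant phase.

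You write the phase of $A_{s,h_j,n_0}(f_\sigma v)(x)$ as $e^{-ib\,S_{n_0}r(h_j(x))}$ and treat the remaining factor $e^{S_{n_0}(\phi-\sigma r)(h_j(x))}(f_\sigma v)(h_j(x))$ as essentially positive real. But $v$ is \emph{complex-valued}, so this factor carries the additional phase $\arg v(h_j(x))$. The true phase difference is $\theta(x)=V(x)-b\,\psi_{h_1,h_2}(x)$ with $V=\arg(v\circ h_1)-\arg(v\circ h_2)$, and your planar inequality $|z_1e^{i\theta_1}+z_2e^{i\theta_2}|\leqslant\max\{|z_1|+\eta_0|z_2|,\,\eta_0|z_1|+|z_2|\}$ cannot be applied with complex $z_j$. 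This forces the case split you are missing: if $|v(h_j x_0)|\leqslant\tfrac12 u(h_j x_0)$ for some $j$, the cone regularity of $u,v$ already gives $|v(h_j x)|\leqslant\eta_0\,u(h_j x)$ on a $\delta/|b|$-ball about $x_0$, so the desired inequality holds trivially with $x_1=x_0$. Only in the remaining case $|v(h_j x_0)|>\tfrac12 u(h_j x_0)$ for both $j$ is $\arg(v\circ h_j)$ well-defined and, via the cone bound $|v(h_j x)-v(h_j x_0)|\leqslant C_0|b|^\alpha u(h_j x_0)\,d(h_j x,h_j x_0)^\alpha$ together with $|v(h_j x_0)|>\tfrac12 u(h_j x_0)$, of small oscillation over $B(x_0,(\Delta+\delta)/|b|)$; one then chooses $x_1$ so that $b(\psi(x_1)-\psi(x_0))=\theta(x_0)-\pi\pmod{2\pi}$, forcing $\theta(x_1)$ near $\pi$, and the small variation of $V$ and of $b\psi$ on $B(x_1,\delta/|b|)$ keeps $|\theta-\pi|\leqslant 2\pi/3$ there. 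Your ``bounded-distortion bookkeeping'' paragraph handles only the real factors and does not cover this; see the proof of Lemma~\ref{lemm:cancel-N} in the paper (or \cite[Lemma~2.9]{AM}) for the correct split and the control of $V$.
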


\begin{remark}\label{rem:UNI}
\emph{
It is known that the previous lemma ensures that there exist uniform constants $\Delta,\delta$ (depending on $D$)
so that the terms of the twisted transfer operator, observed as vectors in $\mathbb C$, exhibit cancellations
within a ball of radius $\delta/|b|$ inside any ball of larger radius $\Delta/|b|$. 
 This choice of constants, as the definition of the UNI condition itself, requires implicitly that the map $T$ is full branch. 
}
\end{remark}

There are two key issues in order to generalize the classical UNI condition to our context. First, as inverse branches
are not necessarily full branches the domain of the maps ~\eqref{eq:def-psi} 
may have boundary points.  
Second, as $T$ is not necessarily full branch, it may occur that the selection of inverse branches for the conclusion of 
Definition~\ref{def:UNI} may need to be pointwise. 
Hence one is tempted to define the following natural extension:

\begin{definition}\label{def:UNI-weak}
\emph{
The suspension semiflow $(X_t)_{t\geqslant 0}$ over a piecewise expanding Markov map $T$ with roof function $r$ satisfies the 
\emph{uniform non-integrability (UNI) condition}
if there exists $D,R>0$ and for some sufficiently large integer $n_0\geqslant 1$ and $y\in \bigcup_{i=1}^m P_i$ there exist
inverse branches $h_{\overline{\omega}},h_{\omega}\in\mathcal{H}_{n_0}$ such that:
\begin{enumerate}
\item $y\in T^{n_0}(\omega) \cap T^{n_0}(\overline{\omega})$, and 
\item $ \inf_{ x\in B(y,R) \cap \mbox{Dom}(\psi_{h_{\omega},h_{\overline{\omega}}})} |\psi'_{h_{\overline{\omega}},h_{\omega}}|\geqslant D.$
\end{enumerate}
}
\end{definition}
This condition appears naturally in our context  (cf. Proposition~\ref{prop:transv-implies-wUNI}). This turns out to be crucial in the construction of a 
cancellations argument identical to Lemma~\ref{lemm:cancel}, which now relies on the construction of suitable function determined (in open sets) according to the inverse branches that appear in Definition~\ref{def:UNI-weak}.

\subsection{Geometric aspects of transversality: cones and coboundaries}\label{geom-transv}
This subsection, inspired by \cite{BW,T0}, shows how transversality can be observed in terms of cone fields. The results in this subsection are independent on the dimension $d$ of the phase space $J$,  where $T$ acts as a conformal expanding map.
Given $t\geqslant 0$ one can write the suspension semiflow by 
$$
X_t(x, u)=(T^n(x), t+u-S_nr(x))
$$ 
where $n=n(t,(x,u))\geqslant 1$ is uniquely determined by $S_nr(x)\leqslant u+t<S_{n+1}r(x)$, a semiflow that evolves on
the $(d+1)$-dimensional quotient manifold $J^r$ obtained from $J\times \mathbb R_+$ as described in 
Subsection~\ref{subsec:susp}. After an identification of the tangent space at each point with 
$\mathbb R^{d+1}$ one can write $DX_t(x,u)=\mathcal{D}^{n}(x,u)$, where
$\mathcal{D}^n(x,u):\mathbb{R}^{d+1}\to\mathbb{R}^{d+1}$ is defined as
\begin{eqnarray*}
\mathcal{D}^{n}(x,u)=\begin{pmatrix}
DT^n(x) & 0 \\
-DS_nr(x)& 1
\end{pmatrix}
\end{eqnarray*}
(we shall omit the dependence of $\mathcal{D}^n(x,u)$ for notational simplicity).

Now, for each $x\in J$ consider the cone $\cC(x):=\cC\subset\mathbb{R}^{d+1}$ defined by
\begin{eqnarray*}
\cC:=\left\{{a\choose b}
:a\in\mathbb{R}^d,\,b\in\mathbb{R},\, |b|\leqslant C_7\|a\|\right\},
\end{eqnarray*}
where $C_7>0$ 
was defined by Lemma~\ref{lemma1} above.
We claim that this cone-field is strictly invariant under the action of 
$\mathcal{D}^n(x)$, for every $x\in J$.
Indeed, if $x\in J$ and ${a\choose b}\in\cC$ then 
\begin{eqnarray*}
\mathcal{D}(x){a\choose b}=\begin{pmatrix}
DT(x) & 0 \\
-Dr(x)& 1
\end{pmatrix}\begin{pmatrix}
a\\
b 
\end{pmatrix}=\begin{pmatrix}
a_1\\
b_1 
\end{pmatrix},
\end{eqnarray*}
with $a_1=DT(x)a$ and $b_1=b-Dr(x)a$.
Choosing $\omega\in\cP$ so that $a=Dh_{\omega}(Tx)a_1$, one can use the backward contraction of $T$ (recall \eqref{h-1}) and hypothesis \eqref{h1} to get
\begin{eqnarray*}
|b_1|=|b-Dr(x)a|=|b-D(r\circ h_{\omega})(Tx)a_1|&\leqslant &|b|+|D(r\circ h_{\omega})(Tx)a_1|\\
&\leqslant & 
 C_7\lambda^{-1} \, \|a_1\|+C_4\|a_1\|\leqslant C_7\|a_1\|.  
\end{eqnarray*}
Since $x$ was chosen arbitrary and $\mathcal{D}^{n}(x)=\mathcal{D}(T^{n-1}(x)) \cdot \dots \cdot \mathcal{D}(x)$, a recursive argument ensures that all matrices $\mathcal{D}^{n}(x)$ preserve the cone $\mathcal C$, as claimed.

\smallskip
Now, following \cite{BW}, we recall the concept of transversality involving the images of the cone $\mathcal C$ by the matrices $\mathcal D^n(x)$ 
associated to different pre-images of a point. 

\begin{definition}\label{def:transversality}
\emph{
Given $y\in J$ and a pair of points $x_1, x_2\in T^{-n}(y)$, 
we say that the corresponding image cones are \emph{transversal} (and denote it by $\mathcal{D}^n(x_1)\cC\pitchfork\mathcal{D}^n(x_2)\cC$)
if the intersection
$
\mathcal{D}^n(x_1)\cC\cap\mathcal{D}^n(x_2)\cC
$ 
does not contain a $d$-dimensional subspace.}
\end{definition}

\begin{remark}\label{consq_transv}
\emph{
We will use the following consequences of transversality which can be found in \cite[Lemma~3.7]{BW}:
\begin{enumerate}
\item
If $\omega,\overline{\omega}\in\mathcal{P}^{(n)}, y\in J$ and 
$\mathcal{D}^n(h_{\omega}y)\cC\pitchfork\mathcal{D}^n(h_{\overline{\omega}}y)\cC$
then there exists a one-dimensional subspace $E\subset\mathbb{R}^d$ such that
\begin{eqnarray*}
|D(S_nr\circ h_{\omega})(y)v-D(S_nr\circ h_{\overline{\omega}})(y)v|>C_7(\|Dh_{\omega}(y)v\|+\|Dh_{\overline{\omega}}(y)v\|),
\;  \forall v\in E.
\end{eqnarray*}
The previous consequence of the transversality condition can be understood as a 
UNI condition along some $d$-dimensional subspace. Observe that
any lower bound for the left hand-side above needs to be exponentially decreasing to zero as $n$ tends to infinity (see \eqref{eq:inversebranch} below).
\item
For each $n\in\mathbb{N}$ the real number
\begin{eqnarray}\label{eq:def-an}
a(n):=\sup_{y\in J}\sup_{x_0\in T^{-n}(y)} \,\lambda_0^{-n} \,\frac{1}{f_0(y)} \,\sum_{\substack{x\in T^{-n}(y)\\ \mathcal{D}^n(x)\cC\,\not\pitchfork\,\mathcal{D}^n(x_0)\cC}}e^{S_n\phi(x)}f_0(x)
\end{eqnarray}
satisfies $a(n)\leqslant  \|{\mathscr L}_0^n \mathbf 1\|_\infty =1 $. In particular $\displaystyle\limsup_{n\to\infty}{a(n)}^{\frac{1}{n}}\leqslant 1$.
\end{enumerate}
}
\end{remark}

We will see below that transversality can be characterized in terms of the roof function not being cohomologous to a piecewise constant roof function or, alternatively, in terms of the growth rate of the sequence $(a_n)_{n\in\mathbb{N}}$. 
In order to prove such a characterization we define the sequence $(b(n))_{n\in\mathbb{N}}$ in the following way. For each $n\in\mathbb{N}, y\in J$ and $d$-dimensional subspace $P\subset\mathbb{R}^{d+1}$ consider the quantities
\begin{eqnarray}\label{def:bn}
b(n,P,y):=\sum_{\substack{x\in T^{-n}(y)\\ \mathcal{D}^n(x)\cC\supset P}}\frac{1}{J_{\mu_{0}}T^n(x)}
	\quad\text{and} \quad
	b(n):=\sup_{y}\sup_{P} b(n,P,y),
\end{eqnarray}
where  
$J_{\mu_{0}}T^n(x)=\lambda_0^{-n} e^{S_n\phi(x)} \frac{f_0(x)}{f_0(T^n(x))}$ 
is the Jacobian of the invariant probability $\mu_{0}$ with respect to $T^n$. 
Observe that $b(n,P,y) \leqslant \sum_{x\in T^{-n}(y)} (J_{\mu_{0}}T^n(x))^{-1}=1$ for every $n\geqslant 1$ and $y\in J$.
Moreover, it is not hard to check that 
$(b(n))_{n\in \mathbb N}$ is submultiplicative.
The following is similar to \cite[Lemma~3.8]{BW} for 
transfer operators associated to more general potentials.

\begin{lemma}\label{transv2} 
Let $T:J\to J$ be a $C^{1+\alpha}$ piecewise expanding Markov map and let $r:J\to\mathbb{R}^{+}$ satisfy the previous hypotheses. The following properties are equivalent:
\begin{itemize}
\item[(i)] $\displaystyle\lim_{n\to\infty}a(n)^{\frac{1}{n}}=1$;
\item[(ii)]$\displaystyle\lim_{n\to\infty}b(n)^{\frac{1}{n}}=1$;
\item[(iii)] For all $n\in\mathbb{N}$ and $y\in J$ there exists a $d$-dimensional subspace $E^{(n)}_y\subset\cC$ such that $\mathcal{D}^n(x)\cC\supset E^{(n)}_y$ for all $x\in T^{-n}(y)$;
\item[(iv)] There exists a piecewise $C^1$-smooth map 
$\theta: J \to\mathbb R$ such that $r=\theta\circ T-\theta+\chi$ where $\chi$ is constant on each partition element. 
\end{itemize}
\end{lemma}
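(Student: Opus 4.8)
The plan is to prove the chain of implications $(iv)\Rightarrow(iii)\Rightarrow(ii)\Rightarrow(i)\Rightarrow(iv)$, which establishes all four equivalences. The easy direction is $(iii)\Rightarrow(ii)\Rightarrow(i)$: if $(iii)$ holds, then for every $n$ and $y$ the common $d$-dimensional subspace $E^{(n)}_y$ is contained in every image cone $\mathcal D^n(x)\cC$, $x\in T^{-n}(y)$, so taking $P=E^{(n)}_y$ in the definition of $b(n,P,y)$ forces the sum to run over all of $T^{-n}(y)$, giving $b(n,P,y)=\sum_{x\in T^{-n}(y)}(J_{\mu_0}T^n(x))^{-1}=1$; hence $b(n)\equiv 1$ and $\lim b(n)^{1/n}=1$. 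For $(ii)\Rightarrow(i)$, I would observe that non-transversality of $\mathcal D^n(x)\cC$ and $\mathcal D^n(x_0)\cC$ means their intersection contains a $d$-dimensional subspace $P$; grouping the terms of $a(n)$ by such a common $P$ and comparing with the normalization $f_0\nu_0=\mu_0$ (so that $\lambda_0^{-n}e^{S_n\phi(x)}f_0(x)/f_0(y)=(J_{\mu_0}T^n(x))^{-1}$ up to the $f_0$-eigenfunction bookkeeping), one bounds $a(n)\leqslant C\, b(n)$ for a uniform constant $C$, whence $\limsup a(n)^{1/n}\leqslant \lim b(n)^{1/n}=1$; combined with $a(n)\leqslant 1$ from Remark~\ref{consq_transv}(2) this gives $(i)$.

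The substantive implications are $(iv)\Rightarrow(iii)$ and $(i)\Rightarrow(iv)$. For $(iv)\Rightarrow(iii)$: if $r=\theta\circ T-\theta+\chi$ with $\theta$ piecewise $C^1$ and $\chi$ locally constant, then $S_nr=\theta\circ T^n-\theta+S_n\chi$, so $D(S_nr\circ h_\omega)(y)=D\theta(y)-D(\theta\circ h_\omega)(y)$ for every inverse branch $h_\omega$. I would then compute the image of the cone $\mathcal C$ under $\mathcal D^n(h_\omega(y))$ directly from the block-triangular form: a vector $\binom{a}{b}$ is sent to $\binom{DT^n\,a}{\,b-DS_nr\,a\,}$, and conjugating by the (block-triangular, unipotent in the second coordinate) change of variables implemented by $\theta$ shows that each image cone $\mathcal D^n(h_\omega(y))\cC$ contains the fixed $d$-dimensional subspace $\{\binom{v}{\,D\theta(y)v\,}:v\in\mathbb R^d\}$ — the point being that the $-D(\theta\circ h_\omega)$ contribution is precisely cancelled, independently of $\omega$. (One must check this subspace actually lies inside $\cC$, using the bound $|D\theta|\leqslant C_7$, which one is free to arrange by the choice of $C_7$; alternatively enlarge $C_7$ at the outset.) This gives the common subspace $E^{(n)}_y$ required by $(iii)$.

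For $(i)\Rightarrow(iv)$, the argument is contrapositive: assuming $r$ is \emph{not} piecewise $C^1$-cohomologous to a locally constant function, I must show $a(n)^{1/n}$ is bounded away from $1$, i.e. transversality holds at a definite exponential rate for all large $n$. The idea, following \cite{BW,T0}: failure of $\lim a(n)^{1/n}=1$ is equivalent (by the variational/pressure interpretation of the weighted sums and submultiplicativity) to the existence, for each $n$, of a point $y_n$ and a $d$-dimensional $P_n$ such that a large-$\mu_0$-measure fraction of preimages $x\in T^{-n}(y_n)$ have $\mathcal D^n(x)\cC\supset P_n$. Passing to a limit (the cones $\mathcal D^n(x)\cC$ converge, by the uniform cone contraction coming from \eqref{h-1} and \eqref{h1}, to one-dimensional "stable" directions depending only on the backward itinerary) one extracts an invariant measurable $d$-subspace field, and the equation "$\mathcal D^n(x)\cC$ all contain a common $P$ along every backward orbit" translates, via the block form of $\mathcal D^n$ and the identity $DS_nr\circ h_\omega = D\theta - D(\theta\circ h_\omega)$ one is trying to solve, into the existence of a bounded $C^1$ solution $\theta$ of the cohomological equation on each partition element, i.e. exactly $(iv)$. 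The main obstacle — and the place where one-dimensionality of the unstable direction is used only mildly here but the piecewise (non-full-branch) structure bites — is the regularity and gluing of $\theta$: one obtains $D\theta$ first as an a priori merely measurable invariant section, and must upgrade it to piecewise $C^1$ and verify it genuinely solves $r=\theta\circ T-\theta+\chi$ with $\chi$ locally constant, controlling the behaviour across the (possibly cusped) boundaries of Markov elements using the Hölder-derivative bound \eqref{h0} and the distortion estimates of Section~\ref{transfer}. I expect this regularity bootstrap, rather than the cone bookkeeping, to be the technical heart of the proof.
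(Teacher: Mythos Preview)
Your cycle runs in the opposite direction to the paper's ($(i)\Rightarrow(ii)\Rightarrow(iii)\Rightarrow(iv)\Rightarrow(i)$), and two of your links have genuine gaps.

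\textbf{The step $(ii)\Rightarrow(i)$.} Your claim $a(n)\leqslant C\,b(n)$ via ``grouping by $P$'' does not work: for fixed $x_0$, non-transversality of $\mathcal D^n(x)\cC$ with $\mathcal D^n(x_0)\cC$ only says \emph{some} $d$-subspace lies in the intersection, and that subspace varies with $x$. There is no reason all such $x$ share a single $P$, so you cannot bound the $a(n)$-sum by a single $b(n,P,y)$. The paper goes the other way, $(i)\Rightarrow(ii)$, and even there the comparison is not direct: one needs a geometric ``cone--thinning'' claim showing that if $\mathcal D^n(x_1)\cC\not\pitchfork\mathcal D^n(x_2)\cC$ then, after dropping $\ell$ iterates ($n=k+\ell$ with $\ell$ determined by the hyperbolicity constants), the coarser cone $\mathcal D^k(T^\ell x_2)\cC$ contains the \emph{specific} subspace $Q_n(x_1)=\mathcal D^n(x_1)(\mathbb R^d\times\{0\})$. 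This produces $a(n_k)\leqslant b(k)$ along a subsequence, which is enough.

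\textbf{The step $(i)\Rightarrow(iv)$.} Your sketch collapses three distinct mechanisms into one and misidentifies the obstacle. From $a(n)^{1/n}\to 1$ alone you only know that \emph{many} image cones are pairwise non-transversal; you do not get a common subspace in \emph{all} of them, nor for \emph{all} $y$. The paper obtains this by first passing to $b(n)$ (cone-thinning, as above), then using \emph{submultiplicativity} of $b$: $\lim b(n)^{1/n}=1$ with $b(n)\leqslant 1$ forces $b(n)\equiv 1$, so for each $n$ there exist $y_n,E^{(n)}$ with $\mathcal D^n(x)\cC\supset E^{(n)}$ for \emph{every} $x\in T^{-n}(y_n)$; the \emph{covering} property of $T$ then propagates this to all $y$, yielding $(iii)$. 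Only with $(iii)$ in hand does the construction of $\theta$ become straightforward: one defines $l(y)=\lim_n D(S_nr\circ h_n)(y)$, proves the limit is independent of the inverse-branch sequence (this is exactly where the common subspace is used, via the estimate $|D(S_nr\circ h_\omega)(y)-D(S_nr\circ h_{\overline\omega})(y)|\leqslant 2C_7\lambda^{-n}$), and integrates. There is no ``measurable section plus regularity bootstrap'' issue; $\theta$ is given by an explicit, uniformly convergent series and is piecewise $C^1$ by construction.

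Your $(iv)\Rightarrow(iii)$ and $(iii)\Rightarrow(ii)$ are fine (modulo the sign of $D\theta$ in the invariant subspace, and the fact that $|D\theta|\leqslant C_7$ is a consequence of Lemma~\ref{lemma1}, not a choice).
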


\begin{proof}
Let us prove the several implications separately. 

\medskip
\noindent $(i)\Longrightarrow (ii)$

Suppose that $\displaystyle\lim_{n\to\infty}a(n)^{\frac{1}{n}}=1$. In order to prove (ii) it is enough to show that there exists a sequence 
$(n_k)_{k\in\mathbb{N}}$ of positive integers so that $a(n_k)\leqslant b(k)\leqslant 1$ for all $k\in\mathbb{N}$. 
For each $n\geqslant 1$ and $x \in T^{-n}(y)$ observe that the 
$d$-dimensional subspace $Q_n(x_1):= \mathcal{D}^n(x_1)(\mathbb{R}^d\times \{0\})$  belongs to the cone 
$\mathcal{D}^{n}(x_1)\cC$.  We need the following geometric consequence of domination:

\smallskip 
\noindent {\bf Claim:}\emph{
Given $n\geqslant 1$ there exists $1\leqslant \ell \leqslant n$ so that if $n=k+\ell$, $y\in J$, $x_1,x_2 \in T^{-n}(y)$, 
and
$\mathcal{D}^n(x_1)\cC\,\not\pitchfork\,\mathcal{D}^n(x_2)\cC$ then $\mathcal{D}^{k}(T^{\ell}x_2)\cC\supset Q_n(x_1)$. 
 }
 \begin{proof}[Proof of the claim]
Given $n\geqslant 1$ and $1\leqslant k\leqslant n$, we start by noticing that
\begin{eqnarray}\label{eq:images}
\mathcal{D}^k(x)\cC:=\left\{{a\choose b-DS_kr(x)DT^{-k}(T^kx)a}
:a\in\mathbb{R}^d,\,b\in\mathbb{R},\, |b|\leqslant C_7\|DT^{-k}(T^kx)a\|\right\}
\end{eqnarray}
 (for notational simplicity we let $DT^{-k}(T^kx)$ denote
the derivative at $T^k(x)$ of the inverse branch of $T^k$ containing that point).
Fix $x_1,x_2\in T^{-n}(y)$ so that $\mathcal{D}^n(x_1)\cC\,\not\pitchfork\,\mathcal{D}^n(x_2)\cC$. Assume that $x_1$ and $x_2$ are distinct, otherwise there is nothing to prove.
The argument in the proof consists in getting a cone $\widehat {\mathcal C}$ such that $\mathcal{D}^{k}(T^{\ell}x_2)\cC\supset\widehat {\mathcal C}\supset\mathcal{D}^{n}(x_2) \mathcal C$ that can be used to quantify the fact that $Q_n(x_1)$ and the cone
$\mathcal{D}^{n}(x_2)\cC$ remain within a distance $C_7\lambda ^{-n}$ in a projective metric, then allowing to choose a suitable $1\leqslant \ell \leqslant n$ (depending only on hyperbolicity rates) so that $Q_n(x_1)\subset\mathcal{D}^{k}(T^{\ell}x_2)$. Let us be more precise. Consider the enlargement 
\begin{eqnarray*}
\widehat {\mathcal C}
=
\left\{{a\choose b_1+b_2}
:\,{a\choose b_1}\in\mathcal{D}^{n}(x_2)\cC,\, |b_2|\leqslant C_7\lambda ^{-n}\|a\|\right\},
\end{eqnarray*}
of the cone $\mathcal{D}^{n}(x_2)\mathcal C$, which is also a cone. Using ~\eqref{eq:images}, in order to select $1\leqslant \ell \leqslant n$ so that $\mathcal{D}^{k}(T^{\ell}x_2)\cC\supset\widehat {\mathcal C}$ we need to ensure that each vector in $\widehat {\mathcal C}$ 
can be written as
\begin{eqnarray*}
{a\choose b-DS_{k}r(T^{\ell}x_2)DT^{-k}(T^nx_2)a},\,\,\,\text{for some}\,\,\, |b|\leqslant C_7\|DT^{-k}(T^n)a\|.
 \end{eqnarray*}
Indeed, given ${a\choose b_1+b_2}\in \widehat {\mathcal C}$ define $b=b_1+b_2+DS_{k}r(T^{l}x_2)DT^{-k}(T^nx_2)a$. 
As  ${a\choose b_1}\in \mathcal{D}^{n}(x_2)\cC$, using once more ~\eqref{eq:images}, one can write
$b_1=b_0-DS_{n}r(x_2)DT^{-n}(T^nx_2)a$ with $|b_0|\leqslant C_7\|DT^{-n}(T^nx_2)a\|$. Thus, 
using Lemma~\ref{lemma1} and the chain rule in $S_{k+\ell}r=S_{k}r\circ T^{\ell}+S_{\ell}r$ one deduces that 
\begin{eqnarray*}
|b|&=&|b_1+b_2+DS_{k}r(T^{\ell}x_2)DT^{-k}(T^nx_2)a|\\
&\leqslant & |b_0+b_2-DS_{\ell}r(x_2)DT^{-\ell}(T^{\ell}x_2)DT^{-k}(T^nx_2)a|\\
&\leqslant & C_7(2\lambda ^{-n}\|a\|+\dfrac{1}{2}\|DT^{-k}(T^nx_2)a\|)\\
&\leqslant & C_7(2\lambda ^{-n}\|a\|-\dfrac{1}{2}\|DT^{-k}(T^nx_2)a\|)+{C_7}\|DT^{-k}(T^nx_2)a\|.
\end{eqnarray*}

We now claim that the first summand in the last expression is negative. Indeed,  
since 
$\lambda \leqslant  \|DT(x)^{-1}\|^{-1}\leqslant  \|DT(x)\|\leqslant {\rho}$
 for all $x\in J$ we get that $\|DT^{-k}(T^nx_2)a\|\geqslant \rho^{-k}\|a\|$ and, consequently, $2\lambda ^{-n}\|a\|<\frac{1}{2}\|DT^{-k}(T^nx_2)a\|$ whenever $n\geqslant k\frac{\log(\rho/4)}{\log\lambda}$. Thus it is enough to take
$n=n_k:=\lfloor k\frac{\log(\rho/4)}{\log\lambda}\rfloor$. 
\end{proof}

We can now complete the proof of this implication. Indeed, taking $n=n_k=\lfloor k\frac{\log(\rho/4)}{\log\lambda}\rfloor$ given the previous claim, 
writting $n=k+\ell$ and using that ${\mathscr L}^\ell_0 \mathbf 1=\mathbf 1$, we conclude that 
\begin{eqnarray*}
\,\lambda_0^{-n} \,  \frac{1}{f_0(y)}\sum_{\substack{x_2\in T^{-n}(y)\\ \mathcal{D}^n(x_2)\cC\,\not\pitchfork\,\mathcal{D}^n(x_1)\cC}}e^{S_n\phi(x_2)}f_0(x_2)
&\leqslant & \,\lambda_0^{-n} \,  \frac{1}{f_0(y)}\displaystyle\sum_{\substack{x_2\in T^{-n}(y)\\ \mathcal{D}^k(T^\ell x_2)\cC\supset Q_n(x_1)}}e^{S_k\phi(T^\ell x_2)}e^{S_\ell \phi(x_2)}f_0(x_2)\\
&\leqslant & \,\lambda_0^{-k} \,  \frac{1}{f_0(y)}\displaystyle\sum_{\substack{x_3\in T^{-k}(y)\\ \mathcal{D}^k(x_3)\cC\supset Q_n(x_1)}}e^{S_k\phi(x_3)} \cdot \lambda_0^{-\ell}\displaystyle\sum_{x_2\in T^{-\ell}(x_3)}e^{S_\ell\phi(x_2)}f_0(x_2)\\
&\leqslant & b(k,Q_{n}(x_1),y) \cdot ({\mathscr L}^\ell_0 \mathbf 1)(x_3) \smallskip \\ 
&= & b(k,Q_{n}(x_1),y)
\end{eqnarray*}
for any $x_1\in T^{-n}(y)$. This proves that $a(n_k)\leqslant b(k)\leqslant 1$ for every $k\geqslant 1$, as claimed. 

\smallskip
\noindent $(ii)\Longrightarrow (iii)$
\smallskip

By construction the sequence $(b(n))_{n\in\mathbb{N}}$ is submultiplicative, bounded above by $1$ and satisfies $\displaystyle\lim_{n\to\infty}b(n)^{\frac{1}{n}}=1$, 
hence $b(n)=1$ for all $n\geqslant 1$.
Therefore, there exists $y_n\in J$
and a $d$-dimensional subspace $E^{(n)}\subset\mathbb{R}^{d+1}$ such that
$$
b(n,E^{(n)},y_n)=\sum_{\substack{x\in T^{-n}(y_n)\\ \mathcal{D}^n(x)\cC\supset E^{(n)}}}\frac{1}{J_{\mu_{0}}T^n(x)}=1
$$
and, in particular, $\mathcal{D}^{n}(x)\cC\supset E^{(n)}$ for every $x\in T^{-n}(y_n)$.
We claim that this property implies property (iii) holds. Indeed, if this was not the case 
there would exist $n_0\in\mathbb{N}, y_0\in J$ and $x_1, x_2\in T^{-{n_0}}(y_0)$ such that $\mathcal{D}^{n_0}(x_1)\cC\cap \mathcal{D}^{n_0}(x_2)\cC$ does not contain a $d$-dimensional subspace.
Choose $\omega_1, \omega_2\in\mathcal{P}^{(n_0)}$ and $h_{\omega_1}, h_{\omega_2}\in \mathcal H_{n_0}$ so that $x_1=h_{\omega_1}(y_0)$ and $ x_2=h_{\omega_2}(y_0)$. By the openness of the transversality property there exists an open neighborhood $J_{12} \subset \mbox{Dom}(h_{\omega_1}) \cap \mbox{Dom}(h_{\omega_2})$ of $y_0$ such that $\mathcal{D}^{n_0}(h_{\omega_1}(y))\cC\cap \mathcal{D}^{n_0}(h_{\omega_2}(y))\cC$ does not contain a $d$-dimensional subspace for all $y\in J_{12}$.
Now, we make use of the covering property of $T$: there exists $m_0\geqslant 1$ so that $T^{m_0}(J_{12})=J$.
In consequence, for every $x\in J$ there exists $\omega_0\in\mathcal{P}^{(m_0)}$ such that $h_{\omega_0}(x) \in J_{12}$. 
Set $x_1=h_{\omega_1}(h_{\omega_0}x)$ and $x_2=h_{\omega_2}(h_{\omega_0}x)$. Using that
\begin{eqnarray*}
\mathcal{D}^{n_0+m_0}(x_1)\cC\cap\mathcal{D}^{n_0+m_0}(x_2)\cC
	=\mathcal{D}^{m_0}(h_{\omega_0}x)\, \Big(\mathcal{D}^{n_0}(h_{\omega_1}(y))\cC\cap\mathcal{D}^{n_0}(h_{\omega_2}(y))\cC\Big),
\end{eqnarray*}
we conclude that all points $x\in J$ have some preimages $x_1, x_2\in T^{-(m_0+n_0)}(x)$ 
such that $\mathcal{D}^{n_0+m_0}(x_1)\cC\cap\mathcal{D}^{n_0+m_0}(x_2)\cC$ does not contain a $d$-dimensional subspace of $\mathbb{R}^{d+1}$, thus leading to a contradiction with the existence of the points $y_n$. This proves that item (iii) 
holds. 

\smallskip
\noindent $(iii)\Longrightarrow (iv)$
\smallskip

Given an integer $n\geqslant 1$ and a collection $\omega_1,\omega_2, \dots, \omega_n$ of elements in the Markov partition $\cP$, set $h_n:=h_{\omega_n}\circ\cdots\circ h_{\omega_2}\circ h_{\omega_1}$. The chain rule guarantees that
\begin{eqnarray*}
D(S_nr\circ h_n)(x)=\sum^{n}_{k=1}D(r\circ h_{\omega_k})(h_{k-1}(x))Dh_{k-1}(x),
	\qquad\forall x\in \mbox{Dom}(h_n).
\end{eqnarray*}
The argument in the proof of Lemma \ref{lemma1} implies that the previous series is absolutely convergent. We claim that property (iii) implies that this convergence does not depend of the choice of sequence of inverse branches. Indeed, saying that for all $n$ and $y\in J$ there exists a $d$-dimensional subspace $E^{(n)}_y\subset\cC$ such that $\mathcal{D}^n(x)\cC\supset E^{(n)}_y$ for all $x\in T^{-n}(y)$ implies that $\mathcal{D}^n(x_1)\cC\,\not\pitchfork\,\mathcal{D}^n(x_2)\cC$ for each $x_1, x_2\in T^{-n}(y)$.
Take a vector $\overline{a}\in E^{(n)}_y\subset\mathcal{D}^n(x_1)\cC\cap\mathcal{D}^n(x_2)\cC$. 
Recalling ~\eqref{eq:images}, there are $a\in\mathbb{R}^d, b_1,b_2\in\mathbb{R}$ such that  
\begin{eqnarray*}
\overline{a}={a\choose b_1-DS_nr(x_1)DT^{-n}(T^nx_1)a}={a\choose b_2-DS_nr(x_2)DT^{-n}(T^nx_2)a},
\end{eqnarray*}
with $|b_1|\leqslant C_5\|T^{-n}(T^nx_1)a\|$ and $|b_2|\leqslant C_5\|T^{-n}(T^nx_2)a\|$. 
Hence 
\begin{eqnarray}\label{eq:inversebranch}
|DS_nr(x_1)DT^{-n}(T^nx_1)a-DS_nr(x_2)DT^{-n}(T^nx_2)a| =  |b_1-b_2|\leqslant 2C_7\lambda ^{-n}\|a\|
\end{eqnarray}
or, equivalently,
$
|D(S_nr\circ h_{\omega})(y)a-D(S_nr\circ h_{\overline{\omega}})(y)a|\leqslant 2C_7\lambda ^{-n}\|a\|,
$
where $\omega,\overline{\omega}\in \cP^{(n)}$ satisfy $h_{\omega}(y)=x_1$ and $h_{\overline{\omega}}(y)=x_2$. Now, define 
$$
l(y):=\displaystyle\lim_{n\to\infty}D(S_nr\circ h_n)(y)
	= \displaystyle\lim_{n\to\infty} \sum_{j=0}^{n-1} D(r\circ h_{n-j})(y).
$$ 
whose limit is independent of the choice of inverse branches which define $h_n$, by \eqref{eq:inversebranch}.
A simple computation shows that
\begin{eqnarray}\label{cohomol}
l(y)=D(r\circ h_{\omega})(y)+l(h_{\omega}y) Dh_{\omega}(y), \qquad \forall \omega\in\cP 
	\, \, \text{s.t. $y\in \mbox{Dom}(h_\omega)$}.
\end{eqnarray}
Now, for each $\omega_1\in\cP$ choose $y_0\in \mbox{Dom}(h_{\omega_1})$ and define
\begin{eqnarray*}
\theta(y)=\sum^{\infty}_{j=1} [(r\circ h_j)(y)-(r\circ h_j)(y_0)], \qquad y\in \mbox{Dom}(h_{\omega_1}).
\end{eqnarray*}
The previous discussion ensures that the expression does not depend on the point $y_0 \in \mbox{Dom}(h_{\omega_1})$. Exhausting the elements of the partition $\mathcal P$ we obtain 
a piecewise ${C}^1$-smooth function $\theta: J \to \mathbb R$
which satisfies $D\theta(y)=l(y)$. Furthermore, by \eqref{cohomol} one concludes that $D(r+\theta-\theta\circ T)=0$ at all points in the interior of the elements of the Markov partition, which ensures that $r+\theta-\theta\circ T$ is piecewise constant.

\smallskip
\noindent $(iv)\Longrightarrow (i)$
\smallskip

Assume there exists a piecewise $C^1$-smooth function $\theta: J \to \mathbb{R}$ 
such that $r=\theta\circ T-\theta+\chi$ where $\chi$ is constant on each partition element. 
Then, taking $h_n\in \mathcal H_n$, differentiating the expression $S_n r \circ h_n =\theta -\theta\circ h_n + S_n\chi\circ h_n$
and using Lemma~\ref{lemma1} we conclude that $\|D\theta(x)\| \leqslant C_7$ for each $x\in J$, and so
the subspace
\begin{eqnarray*}
\mathcal{E}(x):=\left\{{a\choose {-D\theta (x)a}}\,:\,a\in\mathbb{R}^d\right\}\subset\mathbb{R}^{d+1}
\end{eqnarray*}
is contained in the cone $\cC$. 
 Using that $DS_nr(x)=D\theta(T^nx)DT^n(x)-D\theta(x)$ we also get that
\begin{eqnarray*}
\mathcal{D}^{n}(x)\cE(x) &=&\left\{\begin{pmatrix}
DT^n(x) & 0 \\
-DS_nr(x)& 1
\end{pmatrix}{a\choose {-D\theta (x)a}}\,:\,a\in\mathbb{R}^d\right\}\\
&=&\left\{{{DT^{n}(x)a}\choose {-D\theta(T^{n}x)DT^{n}(x)a}}\,:\,a\in\mathbb{R}^d\right\}=\mathcal{E}(T^nx),
\end{eqnarray*}
which ensures that 
$\mathcal C \supset \mathcal{D}^n(x)\cC\supset\mathcal{D}^n(x)\cE(x)=\cE(y)$ for all $y\in J$ and $x\in T^{-n}(y)$.
We conclude that $\mathcal{D}^n(x)\cC\,\not\pitchfork\,\mathcal{D}^n(x_0)\cC$ for all $x,x_0\in T^{-n}(y)$, which implies that actually $a(n)=({\mathscr L}_0^n\mathbf 1)(y)=1$ for all $y\in J$ and $n\geqslant 1$. 
This proves that property (i) holds and finishes the proof of the lemma.
\end{proof}

\begin{remark}\label{transvxUNI}
\emph{
In the case that $e^{-\gamma_0}:=\liminf_{n\to \infty}a(n)^{\frac{1}{n}}<1$ we get that 
for any $\gamma\in (0,\gamma_0)$ there exists $C_{\gamma}>0$ such that $a(n)\leqslant C_{\gamma}e^{-\gamma n}$ for  all $n\in\mathbb{N}$. Hence for all $y\in J$ and $x_0\in T^{-n}(y)$,
\begin{eqnarray}
\frac{1}{f_0(y)}\sum_{\substack{x\in T^{-n}(y)\\ \mathcal{D}^n(x)\cC\,\not\pitchfork\,\mathcal{D}^n(x_0)\cC}}e^{S_n\phi(x)}f_0(x)\leqslant C_{\gamma}e^{-\gamma n}, \quad \forall n\geqslant 1.
\end{eqnarray}
As the right hand side above tends to zero 
we conclude that for each $y\in J$ and every large $n\geqslant 1$ there exists a pair of preimages $x_0,x_1\in T^{-n}(y)$ satisfying $\mathcal{D}^{n}(x_0)\cC\pitchfork\mathcal{D}^{n}(x_1)\cC$.
}
\end{remark}
\subsection{Analytic aspects of transversality: a cancellations lemma}\label{ssubsec:weak}

Here we prove that transversality gives rise to a cancellations lemma similar to Lemma~\ref{lemm:cancel}.
We proceed to show that transversality implies the UNI condition for the suspension semiflow associated to the piecewise expanding Markov map on the base. 
The cancellations argument following them require the map $T$ to act on the interval. We begin with the following auxiliary lemma.

\begin{lemma}\label{le:UNI}
There exists $C_{9}>0$ such that, for all $n\in\mathbb{N}, \omega, \overline{\omega}\in\mathcal{P}^{(n)},$
\begin{eqnarray*}
|D\psi_{\omega,\overline{\omega}}(x)v-D\psi_{\omega,\overline{\omega}}(y)v| \leqslant C_{9}\, \|v\| \; d(x,y)^\alpha, \; 
\quad \forall x,y\in \mbox{Dom}(\psi_{\omega,\overline{\omega}}), \; \forall v\in \mathbb R^d.
\end{eqnarray*}
\end{lemma}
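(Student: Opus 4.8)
The idea is to reduce the statement to a single uniform $\alpha$-H\"older estimate for the derivative of $S_nr\circ h$, over all inverse branches $h$ of all iterates $T^n$. Indeed, by the definition \eqref{eq:def-psi} we have $\psi_{\omega,\overline\omega}=S_nr\circ h_{\omega}-S_nr\circ h_{\overline\omega}$, so $D\psi_{\omega,\overline\omega}=D(S_nr\circ h_{\omega})-D(S_nr\circ h_{\overline\omega})$; it therefore suffices to produce a constant $C>0$, independent of $n$ and of the inverse branch, such that
\[
\big|D(S_nr\circ h_n)(x)v-D(S_nr\circ h_n)(y)v\big|\le C\,\|v\|\,d(x,y)^{\alpha}
\]
for all $x,y$ in the same connected component of $\mathrm{Dom}(h_n)$ and all $v\in\mathbb R^d$, and then take $C_9=2C$. (We note that $x,y$ may be taken in the same connected component of $\mathrm{Dom}(\psi_{\omega,\overline\omega})$, each $h_i$ being a diffeomorphism onto a connected subset of a single Markov domain, so all the H\"older inequalities below are applied within one smoothness component.)

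First I would expand along the Birkhoff sum, exactly as in the proof of Lemma~\ref{lemma1}. Writing $h_n=h_{\omega_n}\circ\cdots\circ h_{\omega_1}$ and observing that $T^{j}\circ h_n$ is the inverse branch $h_{n-j}$ of $T^{n-j}$ on the corresponding cylinder, the chain rule gives $D(S_nr\circ h_n)=\sum_{i=1}^{n}D(r\circ h_i)$ with $D(r\circ h_i)(x)=Dr(h_i(x))\,Dh_i(x)$. For each $i$ I would then split the increment as
\[
D(r\circ h_i)(x)v-D(r\circ h_i)(y)v=\big[Dr(h_ix)-Dr(h_iy)\big]Dh_i(x)v+Dr(h_iy)\big[Dh_i(x)-Dh_i(y)\big]v.
\]

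Next I would estimate the two pieces using the standing hypotheses, so that each is dominated by a geometric factor in $i$. For the first piece, since $r$ is $C^{1+\alpha}$ on each Markov domain, $Dr$ is $\alpha$-H\"older there with constant $|Dr|_{\alpha}$; combined with the backward contraction $\|Dh_i\|\le\lambda^{-i}$ from \eqref{h-1} (which also yields $d(h_ix,h_iy)\le\lambda^{-i}d(x,y)$ on connected domains), this piece is at most $|Dr|_{\alpha}\,\lambda^{-i(1+\alpha)}\,\|v\|\,d(x,y)^{\alpha}$. For the second piece, $\|Dr\|_\infty<\infty$ ($r$ being $C^1$ on the compact closures of the Markov domains, and in fact $\|Dr\|_\infty\le C_4\rho$ by \eqref{h1}), while the distortion estimate \eqref{h0} for inverse branches gives $\|Dh_i(x)-Dh_i(y)\|\le C_2\lambda^{-i}d(x,y)^{\alpha}$, so this piece is at most $C_2\,\|Dr\|_\infty\,\lambda^{-i}\,\|v\|\,d(x,y)^{\alpha}$. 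Summing over $i\ge1$ gives $C=\frac{|Dr|_{\alpha}}{1-\lambda^{-(1+\alpha)}}+\frac{C_2\|Dr\|_\infty}{\lambda-1}$, independent of $n$ and of the chosen branches, which completes the proof with $C_9=2C$.

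I do not expect a genuine obstacle: this is essentially the $C^{0}$ computation of Lemma~\ref{lemma1} upgraded to an $\alpha$-H\"older bound on the derivative by inserting the distortion estimate \eqref{h0}. The only points requiring care are keeping all constants uniform in $n$ and in the inverse branches (automatic once everything is bounded by the summable factors $\lambda^{-i}$ and $\lambda^{-i(1+\alpha)}$) and applying the H\"older inequality for $Dr$ within a single smoothness/connected component, as noted above. This uniform H\"older control of $D\psi_{\omega,\overline\omega}$ is precisely what is needed later to spread a pointwise lower bound on $|\psi'_{\omega,\overline\omega}|$ to a uniform-size neighborhood in Proposition~\ref{prop:transv-implies-wUNI}.
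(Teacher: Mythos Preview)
Your proof is correct and follows essentially the same approach as the paper: expand $D(S_nr\circ h_n)$ as a sum over the Birkhoff terms, split each increment into a $Dr$-difference piece and a $Dh$-difference piece, control the former by the $\alpha$-H\"older continuity of $Dr$ together with the contraction $\|Dh_i\|\le\lambda^{-i}$, control the latter by the distortion hypothesis~\eqref{h0}, and sum the resulting geometric series. The only cosmetic differences are your reindexing to inverse branches $h_i$ of $T^i$ (the paper keeps the $T^k\circ h_\omega$ notation) and your slightly sharper bookkeeping of the exponent $\lambda^{-i(1+\alpha)}$ in the first piece, which the paper simply majorizes by $\lambda^{-i}$.
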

\begin{proof}
Write 
$
D(S_nr\circ h_{\omega})(x)=\sum^{n-1}_{k=0} Dr(T^kh_{\omega}x)D(T^k\circ h_{\omega})(x)
$
for each $x \in \mbox{Dom}(h_\om)$.
Given $x,y \in \mbox{Dom}(\psi_{\omega,\overline{\omega}})$, using the triangular inequality 
and the uniform H\"older assumption on the derivative of inverse branches (recall ~\eqref{h0}), we obtain 
\begin{align*}
 |D(S_nr\circ h_{\omega})(x)v - & D(S_nr\circ h_{\omega})(y)v| \\
	& \leqslant \sum^{n-1}_{k=0} |Dr(T^kh_{\omega}x)D(T^k\circ h_{\omega})(x)v 
				- Dr(T^kh_{\omega}x)D(T^k\circ h_{\omega})(y) v| \\
	& + \sum^{n-1}_{k=0} |Dr(T^kh_{\omega}x)D(T^k\circ h_{\omega})(y)v - Dr(T^kh_{\omega}y)D(T^k\circ h_{\omega})(y)v| 
	\\
	& \leqslant  \|Dr\|_{\infty} \|v\| \sum^{n-1}_{k=0} \| D(T^k\circ h_{\omega})(x) - D(T^k\circ h_{\omega})(y)\| \\
	& + |Dr|_{{C}^{\alpha}(J)} \|v\| \sum^{n-1}_{k=0} \lambda^{-n+k} \; d(T^kh_{\omega}x, T^kh_{\omega}y )^\alpha \\
	&\leqslant \max\{C_2,1\} \, \|Dr\|_{{C}^{\alpha}(J)} \;\|v\| \, \sum^{n-1}_{k=0}\lambda^{-(n-k)} \; d(x,y)^{\alpha},
\end{align*}
where $|Dr|_{{C}^{\alpha}(J)}$ (resp. $\|Dr\|_{{C}^{\alpha}(J)}$) stands for the maximum of the H\"older constant 
(resp. H\"older norm) of $Dr$ among the Markov domains. 
As the same estimates hold for the cylinder $\bar\om\in\mathcal P^{(n)}$, the lemma holds with 
$C_{9}:=2 \max\{C_2,1\}  \|Dr\|_{{C}^{\alpha}(J)} \, (1-\lambda^{-1})^{-1}$.
\end{proof}

The following key proposition, which bridges between two very natural geometric
properties of quite different nature shows that the transversality condition can actually ensure that a 
UNI condition along some $d$-dimensional subspace.  
Let us proceed as follows. 

\medskip
Assuming that the transversality condition holds, there exists $N\geqslant 1$ so that
\begin{eqnarray*}\label{eq:N}
\,\lambda_0^{-n} \,  \frac{1}{f_0(y)}\sum_{\substack{x\in T^{-n}(y)\\ \mathcal{D}^n(x)\cC\,\not\pitchfork\,\mathcal{D}^n(x_0)\cC}}e^{S_n\phi(x)}f_0(x)\leqslant C_{\gamma}e^{-\gamma n} < 1
\end{eqnarray*}
for all $y\in J$, $x_0\in T^{-n}(y)$ and $n\geqslant N$ (recall  Remark~\ref{transvxUNI}).
Throughout, let $0<\delta<1$ be small and fixed so that 
\begin{equation}\label{eq:def-consts0}
n_2:=\left\lfloor\frac{\log\delta}{-\log\rho}\, \right\rfloor  \geqslant N.
\end{equation}
Take also the integer
\begin{equation}\label{eq:def-consts}
n:=n_1+n_2, 
\quad
\text{where $n_1\geqslant 1$ will be chosen in ~\eqref{defxi}.}
\end{equation}
By the choice of $n_2$, for each point $y\in J$ there exist at least two transversal preimages in $T^{-n_2}(y)$ (recall Remark~\ref{transvxUNI}). In particular, the assumption of the following proposition is always satisfied for some pair of cylinder sets. 

\begin{proposition}\label{prop:transv-implies-wUNI}
There are constants $b_0>1$, $\Delta>1$ and $D>0$ (depending only on $\delta$) 
so that the following holds: if $y\in J$ and $\omega,\overline{\omega}\in\mathcal{P}^{(n)}$ satisfy
$
\mathcal{D}^{n_2}(T^{n_1}h_{\overline{\omega}}y)\cC\pitchfork\mathcal{D}^{n_2}(T^{n_1}h_{\omega}y)\cC
$
then there exists a unit vector $v\in \mathbb R^d$ so that
\begin{eqnarray*}
| D\psi_{\omega,\overline{\omega}}(z)v| 
	=\left| D(S_nr\circ h_{\overline{\omega}})(z) v - D(S_nr\circ h_{\omega})(z)v \right|
	\geqslant D
\end{eqnarray*}
for every $z \in B\big(y, \frac{\Delta}{|b|}\big) \cap \mbox{Dom}(\psi_{\omega,\overline{\omega}})$ and every $|b|>b_0$.
\end{proposition}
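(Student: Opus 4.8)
The plan is to split every depth-$n$ inverse branch into an outer depth-$n_1$ piece and an inner depth-$n_2$ piece matched to the transversality hypothesis, to treat the $n_1$-contribution as a controlled perturbation by means of Lemma~\ref{lemma1}, to extract the required displacement from the $n_2$-contribution by means of the transversality consequence in Remark~\ref{consq_transv}, and finally to spread the resulting pointwise lower bound to the ball $B(y,\Delta/|b|)$ by means of the H\"older estimate of Lemma~\ref{le:UNI}.

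Concretely, I would fix $y\in J$ and $\omega,\overline\omega\in\mathcal P^{(n)}$ as in the statement and write $h_\omega=\zeta_\omega\circ g_\omega$, $h_{\overline\omega}=\zeta_{\overline\omega}\circ g_{\overline\omega}$, where $g_\omega:=(T^{n_2}\mid_{T^{n_1}\omega})^{-1}\in\mathcal H_{n_2}$ and $g_{\overline\omega}\in\mathcal H_{n_2}$ are the depth-$n_2$ branches with $g_\omega(y)=T^{n_1}h_\omega(y)$, $g_{\overline\omega}(y)=T^{n_1}h_{\overline\omega}(y)\in T^{-n_2}(y)$, and $\zeta_\omega,\zeta_{\overline\omega}$ are the matching depth-$n_1$ branches; note $T^{n_1}\circ h_\omega=g_\omega$ on $T^n(\omega)$. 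Using $S_nr=S_{n_1}r+S_{n_2}r\circ T^{n_1}$ one gets on $\mbox{Dom}(\psi_{\omega,\overline\omega})=T^n\omega\cap T^n\overline\omega$ the decomposition
\[
D\psi_{\omega,\overline\omega}(z)=\Big[D(S_{n_1}r\circ h_{\overline\omega})(z)-D(S_{n_1}r\circ h_\omega)(z)\Big]+\Big[D(S_{n_2}r\circ g_{\overline\omega})(z)-D(S_{n_2}r\circ g_\omega)(z)\Big].
\]

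Evaluating at $z=y$: the hypothesis reads $\mathcal D^{n_2}(g_{\overline\omega}y)\cC\pitchfork\mathcal D^{n_2}(g_\omega y)\cC$, so Remark~\ref{consq_transv} (applied with $n_2$ at the point $y$) produces a one-dimensional subspace $E\subset\R^d$ with
\[
|D(S_{n_2}r\circ g_{\overline\omega})(y)v-D(S_{n_2}r\circ g_\omega)(y)v|>C_7\big(\|Dg_{\overline\omega}(y)v\|+\|Dg_\omega(y)v\|\big),\qquad\forall\,v\in E.
\]
For the outer part, the chain rule gives $D(S_{n_1}r\circ h_\omega)(y)=D(S_{n_1}r\circ\zeta_\omega)(g_\omega y)\,Dg_\omega(y)$, and Lemma~\ref{lemma1} applied to the depth-$n_1$ branch $\zeta_\omega$ bounds $\|D(S_{n_1}r\circ\zeta_\omega)(\cdot)\|$ by $\tfrac12 C_7$, whence $|D(S_{n_1}r\circ h_\omega)(y)v|\le\tfrac12 C_7\|Dg_\omega(y)v\|$, and similarly for $\overline\omega$. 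Choosing any unit $v\in E$ and subtracting,
\[
|D\psi_{\omega,\overline\omega}(y)v|\ \ge\ C_7\big(\|Dg_{\overline\omega}(y)v\|+\|Dg_\omega(y)v\|\big)-\tfrac12 C_7\big(\|Dg_{\overline\omega}(y)v\|+\|Dg_\omega(y)v\|\big)\ \ge\ C_7\rho^{-n_2},
\]
using $\|Dg(y)v\|\ge\rho^{-n_2}\|v\|$ for branches of $T^{n_2}$ (from $\|DT^{n_2}\|\le\rho^{n_2}$). Hence $D:=\tfrac12 C_7\rho^{-n_2}$, which depends only on $\delta$ through $n_2$, satisfies $|D\psi_{\omega,\overline\omega}(y)v|>2D$. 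The crucial point is the coincidence between the constant $C_7$ in the definition of the cone $\cC$ (hence in the transversality consequence) and the factor $\tfrac12 C_7$ in Lemma~\ref{lemma1}: this is what makes the $n_1$-contribution absorbable as half of the transversal displacement, and it is the one place where the precise shape of these constants matters.

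Finally I would fix $\Delta>1$ and spread the estimate: since $h_\omega(y),h_{\overline\omega}(y)$ are defined, $y\in\mbox{Dom}(\psi_{\omega,\overline\omega})$, and for $z\in B(y,\Delta/|b|)\cap\mbox{Dom}(\psi_{\omega,\overline\omega})$ Lemma~\ref{le:UNI} yields $|D\psi_{\omega,\overline\omega}(z)v-D\psi_{\omega,\overline\omega}(y)v|\le C_9\, d(z,y)^\alpha\le C_9(\Delta/|b|)^\alpha$. Setting $b_0:=\max\{1,\Delta(C_9/D)^{1/\alpha}\}$, for every $|b|>b_0$ the right side is $<D$, so $|D\psi_{\omega,\overline\omega}(z)v|>2D-D=D$ on that ball, which is the assertion. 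The only genuine obstacle is the bookkeeping in the first two steps: picking the inner/outer splitting matched to the transversality, and realizing one should estimate $D(S_{n_1}r\circ h_\omega)$ through Lemma~\ref{lemma1} applied to $\zeta_\omega$ — making it comparable to $\|Dg_\omega(y)v\|$ — rather than bounding it crudely; once the splitting is in place the inequalities are routine.
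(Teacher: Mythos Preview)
Your proof is correct and follows essentially the same approach as the paper: the same inner/outer splitting $n=n_1+n_2$, the same application of Remark~\ref{consq_transv} to the depth-$n_2$ part and of Lemma~\ref{lemma1} (via the chain rule through $\zeta_\omega$) to the depth-$n_1$ part to obtain $|D\psi_{\omega,\overline\omega}(y)v|\geqslant C_7\rho^{-n_2}$, and the same spreading via Lemma~\ref{le:UNI}. The only difference is that the paper fixes the specific value $\Delta=4\pi/(C_7\delta)$, which is needed downstream in the cancellation lemma, whereas you leave $\Delta>1$ arbitrary --- this is adequate for the proposition as stated.
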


\begin{proof}
Fix $y\in J$ and $\omega,\overline{\omega}\in\mathcal{P}^{(n)}$ as in the hypothesis.
By item (1) in Remark~\ref{consq_transv} (taking $n=n_2$ and the inverse branches $T^{n_1}\circ h_{\omega}$ and
$T^{n_1}\circ h_{\bar \omega}$ for $T^{n_2}$)
ensures that there exists a one-dimensional subspace $E\subset\mathbb{R}^d$ and a unit vector $v\in E$ such that 
\begin{eqnarray*}
|D(S_{n_2}r\circ T^{n_1}\circ h_{\omega})(y)v-D(S_{n_2}r\circ T^{n_1}\circ h_{\overline{\omega}})(y)v|
	>C_7 (\|D(T^{n_1}\circ h_{\omega})(y)v\|+\|D(T^{n_1}\circ h_{\overline{\omega}})(y)v\|).
\end{eqnarray*}
On the other hand, Lemma \ref{lemma1} ensures that 
\begin{align*}
\max\Big\{ |D(S_{n_1}r\circ h_{\omega}) &(y)v|,  \; |D(S_{n_1}r\circ h_{\overline{\omega}})(y)v| \Big\} 
	\\
	& = \max\Big\{ |D(S_{n_1}r\circ h^1_{\omega})(h^2_\omega(y)) Dh^2_\omega(y)v|,  \; 
		|D(S_{n_1}r\circ h^1_{\bar \omega})(h^2_{\bar\omega}(y)) Dh^2_{\bar \omega}(y)v| \Big\} 
	\\
	& \leqslant \frac{C_7}{2} \min \Big\{ \| D(T^{n_1}\circ h_{\omega})(y)v\|,\; D(T^{n_1}\circ h_{\overline \omega})(y)v\|\Big\}.
\end{align*}
where $h_\omega=h^1_\omega \circ h^2_\omega$, \, $h_{\bar \omega}=h^1_{\bar \omega} \circ h^2_{\bar \omega}$, \,
$h^1_{\omega},h^1_{\bar \omega}$ are inverse branches of $T^{n_1}$ and $h^2_{\omega}, h^2_{\bar \omega}$ 
are inverse branches of $T^{n_2}$ (here we used that $h^2_{\bar \omega}=T^{n_1}\circ h^1_{\bar \omega}$ and 
$h^2_{ \omega}=T^{n_1}\circ h^1_{ \omega}$). 
Now, as $S_nr=S_{n_1}r+S_{n_2}r\circ T^{n_1}$ and $v$ is a unit vector, combining 
the previous estimates we get that
\begin{align*}
|D(S_{n}r\circ h_{\omega})(y)v-D(S_{n}r\circ h_{\overline{\omega}})(y)v|
	& \geqslant |D(S_{n_2}r\circ T^{n_1}\circ h_{\omega})(y)v-D(S_{n_2}r\circ T^{n_1}\circ h_{\overline{\omega}})(y)v|
	\\
	& \; - |D(S_{n_1}r\circ h_{\omega})(y)v|  - |D(S_{n_1}r\circ h_{\overline{\omega}})(y)v|
	\\
	& > \frac{C_7}{2}(\|D(T^{n_1}\circ h_{\omega})(y)v\|+\|D(T^{n_1}\circ h_{\overline{\omega}})(y)v\|)
	\\
	& >  C_7\rho^{-n_2}. 
\end{align*}
In other words, the previous expression shows that 
$
| D\psi_{\omega,\overline{\omega}}(y)v| 
	\geqslant  C_7\rho^{-n_2}. 
$
On the other hand, as $\|v\|=1$ and $D\psi_{\omega,\overline{\omega}}$ is a H\"older continuous operator 
with H\"older constant bounded above by $C_9$
(recall Lemma~\ref{le:UNI}), we obtain that 
$$
|D\psi_{\omega,\overline{\omega}}(z)v-D\psi_{\omega,\overline{\omega}}(y)v|
	\leqslant C_9 \,d(z,y)^{\alpha}
	\leqslant   \frac{C_7}{2} \rho^{-n_2} 
	\quad \text{for all} \; z\in B\big(y, \frac{\Delta}{|b|}\big) 
					 \cap \mbox{Dom}(\psi_{\omega,\overline{\omega}}),
$$ 
taking $\Delta=\frac{4\pi}{C_7\delta }$ (which depends on $\delta$). In the second inequality above we require that $|b|\geqslant b_0$ where $b_0>0$ is 
large so that 
$\frac{\Delta}{b_0}\leqslant  ( \frac{C_7  \rho^{-n_2}  }{2C_9})^{\frac1\alpha}$.
Altogether, this shows that 
$$
| D\psi_{\omega,\overline{\omega}}(z)v| 
	\geqslant   \frac{C_7}{2} \rho^{-n_2} 
		\quad \text{for every} \; z\in B\big(y, \frac\Delta{|b|}\big) 
					 \cap \mbox{Dom}(\psi_{\omega,\overline{\omega}})
$$
and completes the proof of the proposition, taking $D:=  \frac{C_7}{2} \rho^{-n_2}$. 
\end{proof}

Let us make some comments. First, Lemma~\ref{transv2} ensures that the absence of
the transversality condition implies on very rigid properties. Second, Proposition~\ref{prop:transv-implies-wUNI} 
says that transversality implies on the UNI condition evenly among points that are simultaneously 
on balls of radius $\frac\Delta{|b|}$ and contained in the domain of the inverse branches.  

\begin{remark}\label{rmk:choice-constants}
\emph{
Even though the conclusion of Proposition~\ref{prop:transv-implies-wUNI} is identical to the UNI condition we emphasize that the order of the constants is chosen in a substantially different way from the classical context, where the UNI condition 
provides an $n$ at which derivatives are bounded away from zero by a uniform constant, $\Delta>0$ can be chosen arbitrary small and $0<\delta<\Delta$ (depending on $\delta$) is taken such that cancellations occur inside a ball of radius $\delta/|b|$
for all large $|b|$. 
}
\end{remark}

At this point we shall use the uniform nonintegrability condition to obtain oscillatory cancellations for twisted transfer operator for certain classes of functions.
Recall that we denoted by $A_{s,h_{\omega},n}$ the terms appearing in the iterations of the twisted transfer operator (recall ~\eqref{defAs}).
For each $b\in\mathbb{R}$, consider the \textit{cone} $\mathcal{C}_b$ defined by
\begin{eqnarray}\label{def:coneCb}
\mathcal{C}_b=\Big\{\,(u,v)\hspace{-0.2cm}&:&\hspace{-0.2cm} u\in C^{\alpha}(J,\mathbb R), \,v\in C^{\alpha}(J,\mathbb C),\, u>0,\, 0\leqslant|v|\leqslant u,\, |\log u|_{C^{\alpha}}\leqslant C_0 |b|^{\alpha}\\
&& \hspace{2.5cm} |v(x)-v(y)|\leqslant C_0 |b|^{\alpha}u(y)d(x,y)^{\alpha}\,\,\,\text{for all}\,\,\, x,y\in J\Big\},
\end{eqnarray}
where the constant
\begin{equation}\label{defC0}
C_0:=4|f^{-1}_0|_{\infty}|f_0|_{C^{\alpha}(I)}+2(|\phi|_{C^{\alpha}(I)}+|r|_{C^{\alpha}(I)})\cdot(1-\lambda^{-\alpha})>0
\end{equation}
will be referred to as the amplitude of the cone.

\medskip
The next lemma guarantees the invariance of this cone, and contraction of its functions, under the action of the twisted transfer operators. 
At this point we may need to reduce the constant $\delta$ (and make necessary adjustments to $\Delta$ and $n_2$). 
Recall that $\delta>0$ was assumed to satisfy ~\eqref{eq:def-consts0} and $\eta_0=\frac{1}{2}(\sqrt{7}-1)$. 
Diminishing $\delta$, if
necessary, we assume further that
\begin{eqnarray}\label{cancel1}
C_0\delta^{\alpha}<\tfrac{1}{6},\,\;\,\tfrac{2}{3}e^{C_0\delta^{\alpha}}<\eta_0\quad  \text{and}\quad C_7\delta<\tfrac{\pi}{6}.
\end{eqnarray}
Let $\Delta=4\pi/\delta C_7>0$ and $b_0>1$ (depending on $\delta$) be given by 
Proposition~\ref{prop:transv-implies-wUNI}. Recall that $n=n_1+n_2$ where 
$n_2$ satisfies ~\eqref{eq:def-consts0} and $n_1\geqslant 1$ remained to be chosen. 
From now on, we fix $n_1:=\lfloor\beta\log|b_1|\rfloor$, where $b_1\geqslant b_0>1$ satisfies 
\begin{eqnarray}\label{defxi}
	& C_0\lambda^{-\alpha \lfloor \beta \log b_1\rfloor }\left(\tfrac{8\pi}{\delta C_7}\right)^{\alpha}
	\leqslant\tfrac{1}{4}\left(2-2\cos\left(\tfrac{\pi}{12}\right)\right)^{1/2}
	\leqslant\tfrac{1}{4},\\
	& (2+C_0)\lambda^{-\alpha \lfloor \beta \log b_1\rfloor}<(|\phi|_{C^{\alpha}(J)}+|r|_{C^{\alpha}(J)})(1-\lambda^{-\alpha}),\label{defxi1}\\
	& C_0\lambda^{-\alpha \lfloor \beta \log b_1\rfloor}<1, \label{defxi2}
\end{eqnarray}
and $\beta>0$ will be made explicit in \eqref{eq:xi}.

\medskip 
Throughout the remainder of this section we assume that $T$ is a one-dimensional piecewise-expanding map and $J$ is the interval. 

\begin{lemma}\label{lemm:cancel-N} 
Assume that $J$ is the interval.
For any $s=\sigma+ib,\,\,|\sigma|<\varepsilon,\,\,|b|>b_1$ and every $(u,v)\in\cC_b$ the following holds:
if $x_0\in J$ satisfies $\mathcal{D}^{n_2}(T^{n_1}h_{\overline{\omega}}x_0)\cC\pitchfork\mathcal{D}^{n_2}(T^{n_1}h_{\omega}x_0)\cC$ for some $\omega,\overline\omega \in\mathcal{P}^{(n)}$ then there 
exists $x_1\in B(x_0,\Delta/|b|)\cap \mbox{Dom}(\psi_{\overline{\omega},\omega})$ such that either
\begin{enumerate}
\item[(a)]
$|A_{s,h_{\omega},n}(v)+A_{s,h_{\overline{\omega}},n}(v)|\leqslant \eta_0 A_{\sigma,h_{\omega},n}(u)+A_{\sigma,h_{\overline{\omega}},n}(u)$,
\item[(b)]
$|A_{s,h_{\omega},n}(v)+A_{s,h_{\overline{\omega}},n}(v)|\leqslant A_{\sigma,h_{\omega},n}(u)+ \eta_0A_{\sigma,h_{\overline{\omega}},n}(u)$,
\end{enumerate}
for every $x\in B(x_1,\delta/|b|)\cap \mbox{Dom}(\psi_{\overline{\omega},\omega})$.
\end{lemma}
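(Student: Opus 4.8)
The plan is to run the cancellation mechanism behind Lemma~\ref{lemm:cancel} (that is, \cite[Lemma~2.9]{AM}) essentially verbatim; the only substantive change is that the lower bound on the derivative of the temporal function is no longer available on all of $J$, but only on the ball $B(x_0,\Delta/|b|)$ intersected with $\mbox{Dom}(\psi_{\overline{\omega},\omega})=T^{n}(\omega)\cap T^{n}(\overline{\omega})$, a set that may be cut by the (finitely many) boundary points of the Markov partition.

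\emph{Step 1 (localized UNI).} Since $x_0$ satisfies the transversality hypothesis for $\omega,\overline{\omega}\in\mathcal{P}^{(n)}$ with $n=n_1+n_2$, I would apply Proposition~\ref{prop:transv-implies-wUNI} with the already fixed $\delta$, obtaining $\Delta=4\pi/(\delta C_7)$, $D=\tfrac{C_7}{2}\rho^{-n_2}$ and a unit vector $v_0\in\mathbb{R}$ (so $v_0=\pm1$) with $|\psi'_{\overline{\omega},\omega}(z)|=|D\psi_{\overline{\omega},\omega}(z)v_0|\geq D$ for every $z\in B(x_0,\Delta/|b|)\cap\mbox{Dom}(\psi_{\overline{\omega},\omega})$ and $|b|>b_1$; Lemma~\ref{lemma1} moreover gives $|\psi'_{\overline{\omega},\omega}|\leq C_7$ throughout $\mbox{Dom}(\psi_{\overline{\omega},\omega})$. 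By the choice \eqref{eq:def-consts0} one has $\rho^{-n_2}\geq\delta$, hence $D\Delta\geq 2\pi$, with room to spare.

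\emph{Step 2 (a genuine sweeping interval).} By the Markov property, $\mbox{Dom}(\psi_{\overline{\omega},\omega})=T^{n}(\omega)\cap T^{n}(\overline{\omega})$ is a finite union of level-one Markov intervals, each of length at least $\min_i\diam(P_i)$; as $\Delta/|b|<\min_i\diam(P_i)$ for $|b|>b_1$, the fat-intersection property recalled in the introduction (which here holds with $a=\tfrac12$) shows that $B(x_0,\Delta/|b|)\cap\mbox{Dom}(\psi_{\overline{\omega},\omega})$ contains a closed interval $I_0\ni x_0$ with $|I_0|\geq\Delta/|b|$. On $I_0$ the map $\psi_{\overline{\omega},\omega}$ is $C^1$ with $|\psi'_{\overline{\omega},\omega}|\geq D>0$, hence strictly monotone, so $|b|\,\psi_{\overline{\omega},\omega}$ sweeps monotonically an interval of length $\geq D\Delta\geq 2\pi$ over $I_0$.

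\emph{Step 3 (slow variation and the trichotomy).} Write $\omega_1=\omega$, $\omega_2=\overline{\omega}$, set $c_j:=A_{\sigma,h_{\omega_j},n}(u)>0$, so $|A_{s,h_{\omega_j},n}(v)|=r_j c_j$ with $r_j:=|v\circ h_{\omega_j}|/(u\circ h_{\omega_j})\in[0,1]$, and note $\arg A_{s,h_{\omega_1},n}(v)-\arg A_{s,h_{\omega_2},n}(v)\equiv \pm b\,\psi_{\overline{\omega},\omega}+s\ (\mathrm{mod}\ 2\pi)$ with $s:=\arg(v\circ h_{\omega_1})-\arg(v\circ h_{\omega_2})$. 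The cone conditions defining $\cC_b$, the bounded-distortion estimate for $S_n(\phi-\sigma r)$, and the contraction $\diam\big(h_{\omega_j}(I)\big)\leq\lambda^{-n}\diam(I)$, together with \eqref{cancel1} and \eqref{defxi}--\eqref{defxi2}, make $\log r_j$ and $s$ vary negligibly over $I_0$ and over any ball of radius $\delta/|b|$. Then I would argue as in Lemma~\ref{lemm:cancel}: if $r_1\leq\eta_1$ somewhere in $B(x_0,\Delta/|b|)\cap\mbox{Dom}(\psi_{\overline{\omega},\omega})$, for a fixed $\eta_1<\eta_0=\tfrac12(\sqrt7-1)$ close to $\eta_0$ (using $\tfrac23e^{C_0\delta^\alpha}<\eta_0$), then by slow variation $r_1\leq\eta_0$ on a $\delta/|b|$-ball about that point (again a nontrivial subinterval, by fat intersection), whence $|A_{s,h_{\omega_1},n}(v)+A_{s,h_{\omega_2},n}(v)|\leq |A_{s,h_{\omega_1},n}(v)|+|A_{s,h_{\omega_2},n}(v)|\leq\eta_0 c_1+c_2$ there, i.e.\ case (a); symmetrically for $r_2$, giving (b). Otherwise $r_1,r_2>\eta_1$ throughout that set, $s$ is essentially constant on $I_0$, and the $\geq 2\pi$ monotone sweep from Step~2 lets me pick $x_1\in I_0$ at which the phase difference is within a tiny error of $\pi\ (\mathrm{mod}\ 2\pi)$, hence (since $C_7\delta<\pi/6$ and $C_0\delta^\alpha<\tfrac16$ bound its oscillation) lies in an arc of radius $<\pi/3$ about $\pi$ on all of $B(x_1,\delta/|b|)\cap\mbox{Dom}(\psi_{\overline{\omega},\omega})$; there the cosine of the phase difference is $\leq-\tfrac12$, so $|A_{s,h_{\omega_1},n}(v)+A_{s,h_{\omega_2},n}(v)|^2\leq c_1^2+c_2^2-\eta_1^2 c_1 c_2$, and since $1-\eta_0^2<2\eta_0$ this is $\leq(c_1+\eta_0 c_2)^2$ when $c_1\geq c_2$ and $\leq(\eta_0 c_1+c_2)^2$ otherwise, yielding (b), resp.\ (a).

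\emph{Main obstacle.} The one point not already contained in Lemma~\ref{lemm:cancel} is Step~2: ensuring that $|b|\,\psi_{\overline{\omega},\omega}$ still traverses a full period even though the UNI estimate is confined to $B(x_0,\Delta/|b|)\cap\mbox{Dom}(\psi_{\overline{\omega},\omega})$, a set possibly cut by boundary points of the Markov partition. This is exactly where one-dimensionality enters here — the minimal length and the fat-intersection property of one-dimensional Markov subintervals, together with the fact that $\Delta$ was fixed so that $D\Delta\geq 2\pi$ rather than $=2\pi$ — and the remaining work is the routine bookkeeping verifying that \eqref{eq:def-consts0}, \eqref{cancel1} and \eqref{defxi}--\eqref{defxi2} indeed make all the slow-variation errors above negligible.
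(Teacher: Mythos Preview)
Your proposal is correct and follows essentially the same route as the paper's proof: apply Proposition~\ref{prop:transv-implies-wUNI} to get the localized UNI bound on $B(x_0,\Delta/|b|)\cap\mbox{Dom}(\psi_{\overline{\omega},\omega})$, use the one-dimensional fat-intersection to secure a genuine subinterval of length $\geq\Delta/|b|$ over which $b\,\psi_{\overline{\omega},\omega}$ sweeps a full period, and then run the dichotomy of \cite[Lemma~2.9]{AM}. The paper tests the small-modulus case at $x_0$ with threshold $\tfrac12$ (so that $\tfrac12+\tfrac16=\tfrac23<\eta_0$ via \eqref{cancel1}) and in the large-modulus case reduces to $|\theta(x)-\pi|\leq 2\pi/3$ via the $|V(x)-V(x_0)|\leq\pi/6$ bound, while you phrase the same two cases with a threshold $\eta_1<\eta_0$ and finish by the explicit quadratic comparison; these are equivalent bookkeeping choices.
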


\begin{proof}
Since $\|(DT^k(x))^{-1}\|\leqslant\lambda^{-k}$ for all $x\in J$ and $k\in\mathbb{N}$ we have
\begin{eqnarray}\label{cancel2}
d(h_{\omega}x,h_{\omega}x_0)
	\leqslant\lambda^{-n} d(x,x_0)
	\leqslant\lambda^{-n}\delta / |b|,
	\quad \forall x\in B(x_0, \delta / |b|) \cap \mbox{Dom}(\psi_{\overline{\omega}, \omega}).
\end{eqnarray}
Using that $(u,v)\in\cC_b$ together with inequalities \eqref{cancel1} and \eqref{cancel2} we get
\begin{eqnarray}\label{cancel3}
|v(h_{\omega}x)-v(h_{\omega}x_0)|
	&\leqslant & C_0|b|^{\alpha} \,u(h_{\omega}x_0) \,d(h_{\omega}x,h_{\omega}x_0)^{\alpha}\nonumber\\ 
	&\leqslant &C_0\lambda^{-\alpha n} \, u(h_{\omega}x_0) \, \delta^{\alpha} 
	\,<\,\tfrac{1}{6}u(h_{\omega}x_0)
\end{eqnarray}
for every $x\in B(x_0,\delta/|b|)\cap \mbox{Dom}(\psi_{\overline{\omega}, \omega})$.
Using again that $(u,v) \in \cC_b$ and \eqref{cancel2},
\begin{eqnarray*}
|\log u(h_{\omega}x)-\log u(h_{\omega}x_0)| 
	\leqslant C_0|b|^{\alpha}d(h_{\omega}x,h_{\omega}x_0)^{\alpha}
	\leqslant {C_0\lambda^{-\alpha n}\delta^{\alpha}}
\end{eqnarray*}
and, consequently, 
\begin{eqnarray}\label{cancel4}
\tfrac{2}{3}u(h_{\omega}x_0)
	\leqslant \tfrac{2}{3}e^{C_0\lambda^{-\alpha n}\delta^{\alpha}}u(h_{\omega}x)
	< \eta_0 \, u(h_{\omega}x)
\end{eqnarray}
for every $x\in B(x_0,\delta/|b|)\cap \mbox{Dom}(\psi_{\overline{\omega}, \omega})$.
It will be useful to control the oscillation of $v$ in some ball $B(x_1,\delta/|b|)$ around some point $x_1\in B(x_0,\Delta/|b|)$. 
For that purpose, we estimate the oscillation of $v$ in the ball $B(x_0,(\Delta+\delta)/|b|)$.
Set $\xi:=\Delta+\delta \in (0, \frac{8\pi}{\delta C_7})$.
Then, the choice of constants in ~\eqref{defxi} ensures that
\begin{align}\label{cancel5}
|v(h_{\omega}x)-v(h_{\omega}x_0)|
	& \leqslant C_0|b|^{\alpha} \,u(h_{\omega}x_0) \,d(h_{\omega}x,h_{\omega}x_0)^{\alpha} \nonumber \\
	& \leqslant C_0 \, \lambda^{-\alpha n} \, \xi^{\alpha} \, u(h_{\omega}x_0)  \nonumber \\
	& \leqslant\tfrac{1}{4}\left(2-2\cos\left(\tfrac{\pi}{12}\right)\right)^{1/2}u(h_{\omega}x_0)
	\leqslant \tfrac{1}{4}u(h_{\omega}x_0)
\end{align}
for all $x\in B(x_0,\xi/|b|) \cap \mbox{Dom}(\psi_{\overline{\omega}, \omega})$.

\medskip
At this point, we split the proof in two cases. In the first case we suppose that $|v(h_{\omega}x_0)|\leqslant\frac{1}{2}u(h_{\omega}x_0)$ (if $|v(h_{\overline{\omega}}x_0)|\leqslant\frac{1}{2}u(h_{\overline{\omega}}x_0)$ the situation is analogous). In such case,  inequalities
\eqref{cancel3} and \eqref{cancel4} imply that 
\begin{eqnarray*}
|v(h_{\omega}x)|&\leqslant &|v(h_{\omega}x_0)|+|v(h_{\omega}x)-v(h_{\omega}x_0)|\\
&\leqslant &\tfrac{1}{2}u(h_{\omega}x_0)+{\tfrac{1}{6}} u(h_{\omega}x_0)=\tfrac{2}{3}u(h_{\omega}x_0)\leqslant\eta_0 \,u(h_{\omega}x)
\end{eqnarray*}
for every {$x\in B(x_0,\delta/|b|)\cap \mbox{Dom}(\psi_{\overline{\omega}, \omega})$}.
This implies that $|A_{s,h_{\omega},n}(v)|\leqslant \eta_0 A_{\sigma,h_{\omega},n}(u)$ on $B(x_0,\delta/|b|)\cap \mbox{Dom}(\psi_{\overline{\omega}, \omega})$, thus proving that the first alternative in the lemma holds with $x_1=x_0$. 

\smallskip
The second case occurs when $|v(h_{\omega}x_0)|>\frac{1}{2}u(h_{\omega}x_0)$ and $|v(h_{\overline{\omega}}x_0)|>\frac{1}{2}u(h_{\overline{\omega}}x_0)$.
Using polar coordinates we write 
$$
A_{s,h_{\omega},n}(v)(x)=B_{\omega}(x)e^{i\theta_{\omega}(x)}, 
	\qquad A_{s,h_{\overline{\omega}},n}(v)(x)=B_{\overline{\omega}}(x)e^{i\theta_{\overline{\omega}}(x)}
$$ 
and 
$\theta(x):=\theta_{\omega}(x)-\theta_{\overline{\omega}}(x)$. Following \cite[Lemma 2.9]{AM} it is sufficient to show that
there exists $x_1\in B(x_0,\Delta/|b|)\cap \mbox{Dom}(\psi_{\overline{\omega}, \omega})$ so that $|\theta(x)-\pi|\leqslant 2\pi/3$ for all $x\in B(x_1,\delta/|b|)\cap \mbox{Dom}(\psi_{\overline{\omega}, \omega})$.
For that purpose, it is enough to show that $|\theta(x)-\pi|\leqslant 2\pi/3$ for all $x\in B(x_0,\xi/|b|)\cap \mbox{Dom}(\psi_{\overline{\omega}, \omega})$.
Observe that $\theta=V-b\, \psi_{\omega,\overline{\omega}}$, where $V=\arg(v\circ h_{\omega})-\arg(v\circ h_{\overline{\omega}})$. In order to estimate $V$ we shall use the following basic fact present on \cite[Lemma 2.9]{AM}: if 
$z_1,z_2\in \mathbb C$, $|z_1|, |z_2|\geqslant c$ and $|z_1-z_2|\leqslant c(2-2\cos z)^{1/2}$ for some $c>0$ and $|z|<\pi$ then $|\arg(z_1)-\arg(z_2)|\leqslant z$. It follows from \eqref{cancel5}  that 
\begin{eqnarray}\label{cancel6}
|v(h_{\omega}x)-v(h_{\omega}x_0)|\leqslant\tfrac{1}{4} \, (2-2\cos\tfrac{\pi}{12})^{1/2}\, u(h_{\omega}x_0),
\end{eqnarray}
and similarly for $h_{\overline{\omega}}$.
We also have
\begin{eqnarray}\label{cancel7}
|v(h_{\omega}x)|&\geqslant &|v(h_{\omega}x_0)|-|v(h_{\omega}x)-v(h_{\omega}x_0)|\nonumber\\
&\geqslant &\tfrac{1}{2} u(h_{\omega}x_0)-\tfrac{1}{4} u(h_{\omega}x_0)=\tfrac{1}{4} u(h_{\omega}x_0),
\end{eqnarray}
and equally for $h_{\overline{\omega}}$.
Then, \eqref{cancel6} and \eqref{cancel7} imply that 
\begin{eqnarray}\label{cancel8}
|\arg(v(h_{\omega}x))-\arg(v(h_{\overline{\omega}}x_0))|\leqslant \pi/12,
	\quad\text{and so}\quad
	|V(x)-V(x_0)|\leqslant \pi/6.
\end{eqnarray}
Now, the tranversality condition $\mathcal{D}^{n_2}(T^{n_1}h_{\overline{\omega}}x_0)\cC\pitchfork\mathcal{D}^{n_2}(T^{n_1}h_{\omega}x_0)\cC$ at $x_0$ implies on the UNI condition stated at Proposition \ref{prop:transv-implies-wUNI}.
As $T$ acts on the interval we have that $\|D\psi_{\omega,\overline{\omega}}(z)^{-1}\|^{-1}=\|D\psi_{\omega,\overline{\omega}}(z)\|$. Hence, 
the mean value inequality
implies that 
\begin{eqnarray*}
|b(\psi(x)-\psi(x_0))|
	\geqslant |b| \, \inf\|D\psi_{\omega,\overline{\omega}}(z)^{-1}\|^{-1} \,d(x,x_0)
	\geqslant  \tfrac{C_7}{2}|b|  \rho^{-n_2}   \,d(x,x_0)
\end{eqnarray*}
for every $x\in B(x_0,\Delta/|b|)\cap \mbox{Dom}(\psi_{\overline{\omega}, \omega})$
(here the infimum is taken over $B(x_0,\Delta/|b|)\cap \mbox{Dom}(\psi_{\overline{\omega}, \omega})$).
Recall $n_2\geqslant 1$ was chosen so that $\rho^{-n_2}=\delta $. 
Then, as $\Delta=4\pi/\delta C_7$ and $B(x_0,\Delta/|b|)\cap \mbox{Dom}(\psi_{\overline{\omega}, \omega})$ contains an interval of length at least $\Delta/|b|$ 
we conclude that $b(\psi(x)-\psi(x_0))$ fills out an interval around $0$ of length at least $2\pi$ as $x$ varies in $B(x_0,\Delta/|b|)\cap \mbox{Dom}(\psi_{\overline{\omega}, \omega})$. This allow us to choose $x_1\in B(x_0,\Delta/|b|)\cap \mbox{Dom}(\psi_{\overline{\omega}, \omega})$ so that
\begin{eqnarray*}
b(\psi(x_1)-\psi(x_0))=\theta(x_0)-\pi \; (\mbox{mod} 2\pi),
\end{eqnarray*}
and so
\begin{eqnarray*}
V(x_1)-V(x_0)=V(x_1)-b\psi(x_1)-\pi+\theta(x_0)-V(x_0)+b\psi(x_0)=\theta(x_1)-\pi.
\end{eqnarray*}
In this way, it follows from \eqref{cancel8} that $|\theta(x_1)-\pi|\leqslant\pi/6$. Finally, using the choice of $\delta$ in 
\eqref{cancel1}, the mean value theorem and that 
$|\psi_{\overline{\omega}, \omega}'|_{\infty}\leqslant C_7$ (as a consequence of Lemma~\ref{lemma1}),
we obtain
\begin{eqnarray*}
|\theta(x)-\pi| & \leqslant &|\theta(x_1)-\pi| + |\theta(x)-\theta(x_1)|\\
&\leqslant &\pi/6+|\theta(x)-\theta(x_1)|\\
&\leqslant &\pi/6+|b||\psi(x)-\psi(x_1)|+|V(x)-V(x_0)|+|V(x_1)-V(x_0)|\\
&\leqslant &\pi/6+C_7\delta+\pi/6+\pi/6\leqslant 2\pi/3
\end{eqnarray*}
for all $x\in B(x_1,\delta/|b|)\cap \mbox{Dom}(\psi_{\overline{\omega}, \omega})$, as desired.
\end{proof}

The previous lemma, which makes strong use of the interval, ensures that for each pair $(u,v)\in\cC_b$ and $x\in I$ there exist inverse branches $h_{\omega_x}$ and $h_{\overline{\omega}_x}$, parameterized by $\omega_x, \overline{\omega}_x\in \mathcal P^{(n)}$, 
such that the sum of the corresponding terms appearing in the twisted transfer operator exhibit cancellations on the set 
$$
B(x,\delta/|b|) \cap \mbox{Dom}(\psi_{\overline{\omega}_x, \omega_x}) 
\subset B(x,\Delta/|b|)\cap \mbox{Dom}(\psi_{\overline{\omega}_x, \omega_x}).
$$
Recall that $\mbox{Dom}(\psi_{\overline{\omega}_x, \omega_x}) = T^n(\overline{\omega}_x) \cap T^n(\omega_x)$
is the intersection of two intervals obtained as images of elements in the Markov partition $\mathcal P$, hence there are finitely many such 
intervals. 
Actually, by Markov property,
$
\{T(\overline{\omega})\cap T(\omega): \overline{\omega}, \omega\in\mathcal{P}\} 
	=\{T^k(\overline{\omega}) \cap T^k(\omega): \overline{\omega}, \omega\in\mathcal{P}^{(k)}\}
$
for every $k\geqslant 1$, and every element in this collection is a finite union of elements of $\mathcal P$. 
Let $\mathcal{I}$ denote the partition of the interval determined by 
these family of sets, which is coarser than $\mathcal P$.
In particular, there exists $b_2 \geqslant b_1$ so that 
\begin{eqnarray}\label{defb2}
\frac{2\Delta}{b_2}\leqslant \diam (\mathcal I)\leqslant \min_{\omega,\overline\omega \in\mathcal{P}^{(k)}}\diam \{\mbox{Dom}(\psi_{\overline{\omega}, \omega})\}
	\quad\text{for every $k\geqslant 1$.}
\end{eqnarray}

\medskip
Now, given $|b|\geqslant b_2$, consider the partition $\mathcal{Q}=\mathcal{Q}_b$, obtained as refinement of $\mathcal P$ given by Lemma~\ref{le:partQ}. 
in a way that $Q\in \mathcal Q$ satisfies $\frac{2\Delta}{|b|}\leqslant\diam Q\leqslant \frac{2\Delta}{|b|}\rho$. Write $\mathcal{Q}=(Q_j)_{1\leqslant j \leqslant \kappa}$, for some $\kappa \leqslant |b|/2\Delta$.
We claim that 
for each  $1 \leqslant j \leqslant \kappa$ there exist $\overline{\omega}_j=\overline{\omega}_{x_j}, \omega_j=\omega_{x_j} \in\mathcal{P}^{(n)}$ 
and a ball $B_j=B(x_j,{\delta}/{2|b|})\subset Q_j\cap T^n(\omega)\cap T^n(\overline{\omega})$ 
where we observe cancellations. 
Indeed, as $\diam Q_j\geqslant {2\Delta}/{|b|}$, Lemma~\ref{lemm:cancel-N} ensures that there exists $z_j\in Q_j$ and inverse branches $h_{\omega_{z_j}}$ and $h_{\overline{\omega}_{z_j}}$ so that either 
$$
|A_{s,h_{\omega_{z_j}},n}(v)+A_{s,h_{\overline{\omega}_{z_j}},n}(v)|\leqslant \eta_0 A_{\sigma,h_{\omega_{z_j}},n}(u)+A_{\sigma,h_{\overline{\omega}_{z_j}},n}(u)
$$
or 
$$
|A_{s,h_{\omega_{z_j}},n}(v)+A_{s,h_{\overline{\omega}_{z_j}},n}(v)|\leqslant A_{\sigma,h_{\omega_{z_j}},n}(u)+ \eta_0A_{\sigma,h_{\overline{\omega}_{z_j}},n}(u)
$$
on the set $B(z_j,\delta/|b|)\cap T^n(\omega_{z_j})\cap T^n(\overline{\omega}_{z_j})$. 
By \eqref{defb2} and the one-dimensionality of the ambient space, the previous intersection contains a ball $B_j(x_j,\delta/2|b|)$ and the claim follows taking 
$\omega_{x_j}=\omega_{z_j}$ and ${\overline{\omega}_{x_j}}={\overline{\omega}_{z_j}}$. We will use the notation
\begin{equation}\label{omegawinners}
\tilde \omega_j =
\begin{cases}
\begin{array}{ll}
\omega_{x_j}, & 
	\text{if}\; 
|A_{s,h_{\omega_{x_j}},n}(v)+A_{s,h_{\overline{\omega}_{x_j}},n}(v)|\leqslant \eta_0 A_{\sigma,h_{\omega_{x_j}},n}(u)+A_{\sigma,h_{\overline{\omega}_{x_j}},n}(u) \\
\smallskip
\overline{\omega}_{x_j}, & \text{if} \; 
|A_{s,h_{\omega_{x_j}},n}(v)+A_{s,h_{\overline{\omega}_{x_j}},n}(v)|\leqslant A_{\sigma,h_{\omega_{x_j}},n}(u)+ \eta_0A_{\sigma,h_{\overline{\omega}_{x_j}},n}(u),
\end{array}
\end{cases}
\end{equation}
and denote by $\hat \omega_j$ the remaining interval.

\medskip
We proceed to construct a family of bump functions adapted to the previous family of intervals. Indeed, for each $(u,v)\in\cC_b$, $n\geqslant 1$ and $\eta\in [\eta_0,1]$ there exists a $C^1$-bump function 
\begin{equation}\label{def-xi}
\chi=\chi(b,u,v,n,\eta): I \to [\eta,1]
\end{equation} 
satisfying:
\begin{enumerate}
\item $\chi (h_{\tilde \omega_j}(x))=\eta$, for every $x\in B(x_j, {\delta}/{6|b|})$ and $j=1,2, \dots \kappa$
	\smallskip
\item $\chi (x)=1$, for every $x\notin \bigcup_{j=1}^\kappa  h_{\tilde \omega_j}(B(x_j, \delta/2|b|)) $ 
	\smallskip
\item ${|\chi'|\leqslant\dfrac{6(1-\eta)|b|}{\Delta M}}$, where $M=\displaystyle\min_{\omega}\{\inf|h'_{\omega}|\}$.
\end{enumerate}
Indeed, item (3) can be obtained by the mean value theorem since the diameter of the image by $h_{\omega}$ of each sub-interval adjacent to the middle-third one is bounded below by  $\inf |h_{\omega}'| \cdot \frac{\delta}{6|b|}$.  
Finally, throughout we fix $\eta$ sufficiently close to one (and independent of the functions $u$ and $v$) 
so that $|\chi'| \leqslant |b|$ (see Figure~1 below).

\begin{figure}[!htb]
    \label{construcaoCHI}
    \subfloat[\label{fig:seu_madruga} Inside $Q_j\in \mathcal Q$ (in purple) there exists a ball $B(x_j,\delta/2|b|)$ (in red) and two inverse branches (in green and brown) 
    which exhibit cancelations.]{
        \includegraphics[width=0.48\textwidth]{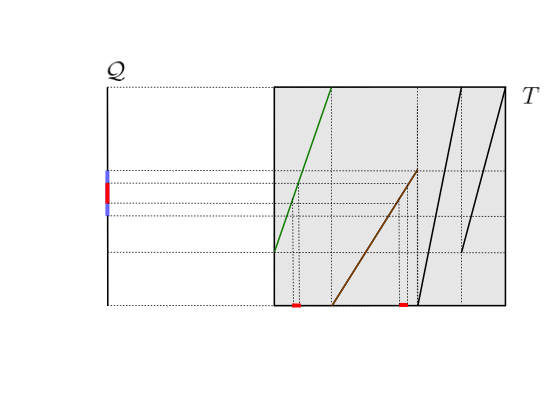}
    }\hfill
    \subfloat[\label{fig:seu_barriga} For each $Q_j\in \mathcal Q$, the image of the ball $B(x_j, \frac\delta{6|b|})$ (in red in the vertical axis) by the inverse branch determined by the winning interval $\tilde \omega_j\in \mathcal P^{(n)}$ determines an interval (in red in the horizontal axis) where the bump function $\chi$ 
    takes value $\eta$.
    ]{
        \includegraphics[width=0.48\textwidth]{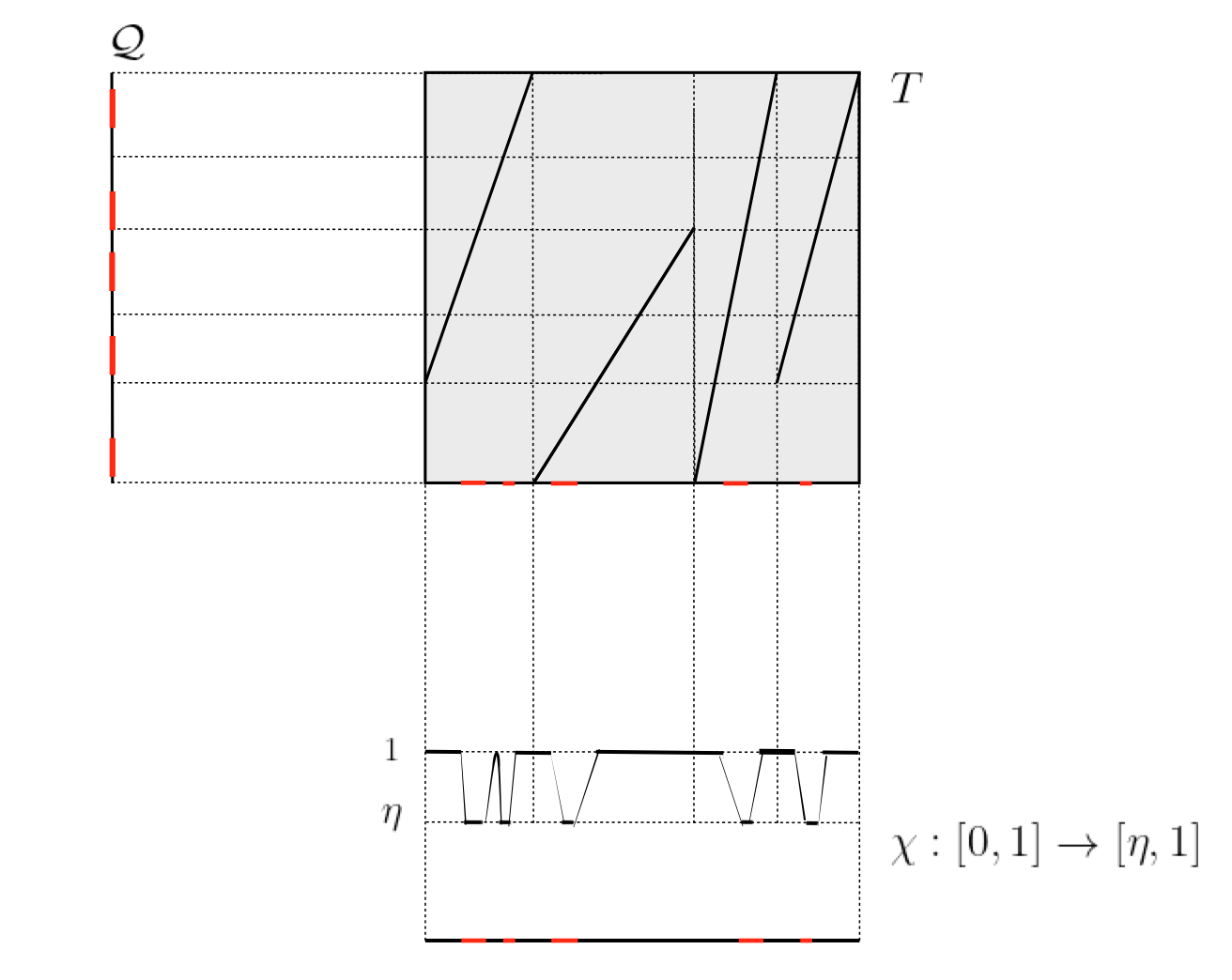}
    }
        \caption{Construction of the bump function}
\end{figure}

\section{Decay of correlations for suspension semiflows}\label{sec:flows-decay-transversal}

The main goal of this section is to prove the dichotomy in Theorem~\ref{thm:A}. As observed in Subsection~\ref{geom-transv}, the roof function is not cohomologous to a piecewise constant one if and only if it satisfies the transversality condition. Hence, we are left to prove exponential decay of correlations for Gibbs measures associated to suspension semiflows over such roof functions. 
Throughout this section $T$ is a piecewise expanding Markov map on the interval $I$.

\subsection{Two reductions}

The argument adapts the classical strategy developed by Dolgopyat~\cite{D} of reducing the proof of 
the exponential decay of correlations to a contraction of the twisted transfer operators. 
The key estimate will be the following contraction of $L^1$-norms of the twisted transfer operators with respect to the $L^{\infty}$-norm.
Recall that the constants
\begin{eqnarray}\label{eq:cts}
N,\;\; \delta,\;\; \Delta, \;\; n_2,\;\;  n_1=\lfloor \beta\log |b_1|\rfloor \;\; \text{and} \;\; n=n_1+n_2,  
\end{eqnarray}
where chosen in this order, where $\beta>0$ was to be determined.   
We now choose $\beta>0$ small so that\footnote{As $\lambda^{\frac{\alpha \lfloor \beta \log_{\lambda}|b|\rfloor}{16}} \leqslant C_0 |b|^\alpha$ is equivalent to $\beta\leqslant 16(1+\log C_0/\alpha\log|b|)$ we can choose $b_2$ large enough in a way that $\beta=\beta(C_0,\alpha)$ can be taken independently of $b$.} 
\begin{eqnarray}\label{eq:xi}
\lambda^{\frac{\alpha \lfloor \beta \log b\rfloor}{16}} \leqslant C_0 |b|^\alpha, 
	\quad\text{for every $|b|>b_2$.}
\end{eqnarray}

\begin{proposition}\label{mainprop}
Let $T:I\to I$ be a $C^{1+\alpha}$ uniformly expanding Markov map and $r:I\to\mathbb{R}^{+}$ 
be a piecewise $C^{1+\alpha}$ smooth roof function satisfying (H1) and (H2).
If there exists no piecewise $C^1$-smooth function $\theta: I \to\mathbb R$ 
such that $\chi=r-\theta\circ T+\theta$ is constant on each element of the partition $\cP$
then there exist $\vep>0$, $b_3>b_2$  and $\xi>0$   so that, for each $s=\sigma+ib$ so that $|\sigma|<\vep,\,|b|>b_3$ and $k=\lfloor \beta\log |b|\rfloor$, we have
\begin{eqnarray*}
\|{\mathscr L}^k_sv\|_{L^1(\mu_{\sigma})}\leqslant \lambda^{-\xi k}\|v\|_{L^{\infty}(\mu_{\sigma})}
\end{eqnarray*}
for every observable $v\in C^{\alpha}(I)$ such that  $\|v\|_{(b)}< \lambda^{\frac{\alpha k}{16}} \|v\|_{L^{\infty}(\mu_{\sigma})}$.
\end{proposition}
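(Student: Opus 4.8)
The plan is to run the Dolgopyat cone-contraction scheme of \cite{AM,AGY,BV,DV}, adapted to the present (possibly non full branch) setting by replacing the uniform nonintegrability condition with the cancellation Lemma~\ref{lemm:cancel-N} and the Federer property with the mild Federer property of Subsection~\ref{subsec:addapted-partititons}. I would begin by noting that, since $r$ is not piecewise $C^1$-cohomologous to a locally constant roof function, the equivalence $(iv)\Leftrightarrow(i)$ of Lemma~\ref{transv2} together with Remark~\ref{transvxUNI} gives the transversality condition and a rate $\gamma>0$ with $a(n)\le C_\gamma e^{-\gamma n}$; all the constants $N,\delta,\Delta,n_2,\beta,n_1=\lfloor\beta\log b_1\rfloor$, $n=n_1+n_2$ and thresholds $b_0,b_1,b_2$ fixed at the end of Section~\ref{sec:UNI-transversal} are then in force, and the cone $\cC_b$ together with the bump functions $\chi=\chi(b,u,v,n,\eta)$ of \eqref{def-xi} is available for all $|b|>b_2$.

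The heart of the proof is a one-step estimate: I would show that there is $\epsilon_0\in(0,1)$, independent of $b$, so that for every $s=\sigma+ib$ with $|\sigma|<\varepsilon$, $|b|>b_2$ and every $(u,v)\in\cC_b$ one has $(a)$ $|\mathscr L_s^nv|\le\mathscr L_\sigma^n(\chi u)$ on $I$; $(b)$ $(\mathscr L_\sigma^n(\chi u),\mathscr L_s^nv)\in\cC_b$; $(c)$ $\|\mathscr L_\sigma^n(\chi u)\|_{L^1(\mu_\sigma)}\le(1-\epsilon_0)\|u\|_{L^1(\mu_\sigma)}$. For $(a)$, given $x\in I$ one takes the cell $Q_j\in\cQ$ containing $x$ and the transversal pair $\tilde\omega_j,\hat\omega_j\in\cP^{(n)}$ attached to it, bounds the two terms $A_{s,h_{\tilde\omega_j},n},A_{s,h_{\hat\omega_j},n}$ by Lemma~\ref{lemm:cancel-N} and all remaining terms by the trivial inequality $|\mathscr L_s(\cdot)|\le\mathscr L_\sigma(|\cdot|)$, using that for $x\in B(x_j,\delta/2|b|)$ one has $\chi(h_\omega x)=1$ whenever $\omega\ne\tilde\omega_j$ (any other set on which $\chi<1$ lies in some $Q_{j'}$ disjoint from $Q_j$) while $\chi(h_{\tilde\omega_j}x)\ge\eta\ge\eta_0$, and that off $\bigcup_jB(x_j,\delta/2|b|)$ one simply has $\mathscr L_\sigma^n(\chi u)=\mathscr L_\sigma^n u$. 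Item $(b)$ is the routine Lasota--Yorke bookkeeping: Proposition~\ref{L-Y0} controls the Hölder seminorm of $\mathscr L_s^nv$, and the cone bounds on $u$, the estimate $|\chi'|\le|b|$ from the construction \eqref{def-xi}, the choice \eqref{defC0} of the amplitude $C_0$ and the choices \eqref{defxi}--\eqref{defxi2} of $n_1$ keep both functions in $\cC_b$, exactly as in \cite{AM,DV}.

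Granting the one-step estimate, I would finish as follows. Given $v\in C^\alpha(I)$ with $\|v\|_{(b)}<\lambda^{\alpha k/16}\|v\|_{L^\infty(\mu_\sigma)}$, set $k_0=\lfloor k/16\rfloor$ and regularise: by Proposition~\ref{L-Y0}$(1)$ and $|v|_{C^\alpha}\le(1+|b|^\alpha)\lambda^{\alpha k/16}\|v\|_{L^\infty}$ the function $v_0:=\mathscr L_s^{k_0}v$ satisfies $|v_0|_{C^\alpha}\le C(1+|b|^\alpha)\|v\|_{L^\infty}$, while $\|v_0\|_{L^\infty}\le C_6\|v\|_{L^\infty}$ by Lemma~\ref{l-hypothesis}; hence the constant $u_0:=c\,\|v\|_{L^\infty(\mu_\sigma)}$, with $c$ a suitable multiple of $\max\{C_6,C_8/C_0\}$ independent of $b$, has $(u_0,v_0)\in\cC_b$ and $\|u_0\|_{L^1(\mu_\sigma)}=c\,\|v\|_{L^\infty(\mu_\sigma)}$. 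With $m=\lfloor(k-k_0)/n\rfloor$, iterating $(a)$--$(c)$ produces $u_i=\mathscr L_\sigma^n(\chi_iu_{i-1})$, $v_i=\mathscr L_s^nv_{i-1}$ with $|v_i|\le u_i$ and $\|u_i\|_{L^1}\le(1-\epsilon_0)\|u_{i-1}\|_{L^1}$, so $\|\mathscr L_s^{k_0+mn}v\|_{L^1(\mu_\sigma)}\le\|u_m\|_{L^1}\le(1-\epsilon_0)^m c\,\|v\|_{L^\infty}$. Since $|\mathscr L_sw|\le\mathscr L_\sigma|w|$ and $\int\mathscr L_\sigma|w|\,d\mu_\sigma=\int|w|\,d\mu_\sigma$, the operator $\mathscr L_s$ is an $L^1(\mu_\sigma)$-contraction, so the at most $n$ remaining iterates cost nothing and $\|\mathscr L_s^kv\|_{L^1(\mu_\sigma)}\le(1-\epsilon_0)^m c\,\|v\|_{L^\infty}$. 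As $m\ge\frac{15k}{16n}-1$, taking $\xi>0$ slightly below $\frac{15\,|\log(1-\epsilon_0)|}{16n\log\lambda}$ and $b_3>b_2$ large enough to absorb $c$ yields the proposition.

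The step I expect to be the real obstacle is $(c)$. Writing $g:=\mathscr L_\sigma^nu$ and using that $\mathscr L_\sigma^n$ preserves $\int\cdot\,d\mu_\sigma$ and $\chi u\ge0$, one gets $\|\mathscr L_\sigma^n(\chi u)\|_{L^1}=\int\chi u\,d\mu_\sigma=\|u\|_{L^1}-\int(1-\chi)u\,d\mu_\sigma$; since the sets $h_{\tilde\omega_j}(B(x_j,\delta/6|b|))$ are pairwise disjoint and carry $\chi=\eta$, the change of variables $y=h_{\tilde\omega_j}(x)$ gives $\int(1-\chi)u\,d\mu_\sigma\ge(1-\eta)\sum_j\int_{B(x_j,\delta/6|b|)}g_j\,d\mu_\sigma$, where $g_j$ is the $\tilde\omega_j$-term of $\mathscr L_\sigma^nu$. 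Two ingredients are then needed. First, although $u$ may oscillate wildly at scale $1/|b|$, its average $g=\mathscr L_\sigma^nu$ over inverse branches satisfies $|\log g|_{C^\alpha}\lesssim|b|^\alpha$ (each preimage ratio is controlled by $|\log u|_{C^\alpha}\lambda^{-\alpha n}$ plus a uniform distortion term), so Corollary~\ref{comparijpropfinite} applies to $g$ and yields $\sum_j\int_{B(x_j,\delta/6|b|)}g\,d\mu_\sigma\ge\delta'\|u\|_{L^1}$. Second — and this is the delicate point — one needs $g_j\gtrsim g$ on $B(x_j,\delta/6|b|)$; this is arranged by using Remark~\ref{transvxUNI} to choose the transversal pair $z^*,z^{**}\in T^{-n_2}(x_j)$ so that $z^*$ carries the largest $n_2$-step contribution to $g(x_j)$, and then the Markov property to extend $z^*$ by its dominant $n_1$-branch, which forces the winning term to be at least a fixed positive fraction of $g(x_j)$ (here it is essential that $n_1$ is bounded independently of $b$, so the combinatorial factor $(\#\cP)^{-n}$ is a constant) — the only subtle bookkeeping being to make the winner actually returned by Lemma~\ref{lemm:cancel-N} coincide with this dominant branch. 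Together with the bounded oscillation of $g_j$ and $g$ on the balls $B(x_j,\delta/6|b|)$, these give $\int_{B(x_j,\,\cdot\,)}g_j\,d\mu_\sigma\gtrsim\int_{B(x_j,\,\cdot\,)}g\,d\mu_\sigma$, and summing and combining with the first ingredient produces $\int(1-\chi)u\,d\mu_\sigma\ge\epsilon_0\|u\|_{L^1}$, which is $(c)$.
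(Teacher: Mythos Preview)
Your steps (a) and (b) are exactly the paper's Corollary~\ref{corollarycancel} and Lemma~\ref{invarcone}, and your finishing argument (regularise, iterate, use that $\mathscr L_s$ is an $L^1(\mu_\sigma)$-contraction for the leftover iterates) is fine. The divergence from the paper, and the genuine gap, is in step~(c): you attempt a direct $L^1$-contraction $\|\mathscr L_\sigma^n(\chi u)\|_{L^1}\le(1-\epsilon_0)\|u\|_{L^1}$, whereas the paper proves an $L^2$-contraction (Lemma~\ref{L2contrac}) and then passes to $L^1$ by H\"older. Your reduction of (c) to $g_j\gtrsim g$ on $B(x_j,\delta/6|b|)$, with $g=\mathscr L_\sigma^n u$ and $g_j$ the $\tilde\omega_j$-term, is correct, but the ``subtle bookkeeping'' you flag cannot be carried out: Lemma~\ref{lemm:cancel-N} returns \emph{one} of the two inputs $\omega,\bar\omega$ as winner according to which of cases (a)/(b) holds, and this is dictated by the values of $u,v$ along both branches, not by you. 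Even if you feed in the pair so that one member is the dominant branch, the winner may be the other one; and since $|\log u|_{C^\alpha}\le C_0|b|^\alpha$ only controls $u$ at scale $|b|^{-1}$, the ratio $u(h_{\tilde\omega_j}x)/u(h_\omega x)$ between \emph{different} $n$-cylinders can be as small as $\exp(-C_0|b|^\alpha\,\mathrm{diam}(I)^\alpha)$. Hence $g_j/g$ is not bounded below by any constant independent of $b$, and (c) fails as stated.

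The paper sidesteps this coupling by Cauchy--Schwarz: writing each summand in $u_{m+1}=\mathscr L_\sigma^n(\chi_m u_m)$ as a product of an $(e^{S_n\phi}f_\sigma)^{1/2}u_m$-factor and an $e^{S_n(\phi/2-\sigma r)}f_\sigma^{1/2}\chi_m$-factor yields
\[
u_{m+1}^2 \;\le\; \xi(\sigma)\,\mathscr L_0^n(u_m^2)\cdot \mathscr L_{2\sigma}^n(\chi_m^2).
\]
The second factor does \emph{not} involve $u_m$, so on $\hat I_m$ one only needs the winning branch to carry a fixed fraction of the total weight in $\mathscr L_{2\sigma}^n\mathbf 1=1$; since $n$ is fixed and $\phi,r,f_{2\sigma}$ are bounded, this fraction is an absolute constant, giving $\mathscr L_{2\sigma}^n(\chi_m^2)\le\eta_1<1$ on $\hat I_m$. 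The first factor $\mathscr L_0^n(u_m^2)$ is summed over \emph{all} branches, has $|\log(\cdot)|_{C^\alpha}\lesssim|b|^\alpha$, and is exactly the function to which Corollary~\ref{comparijpropfinite} applies. Integrating the product then gives $\int u_{m+1}^2\,d\mu_\sigma\le\tau\int u_m^2\,d\mu_\sigma$ with $\tau<1$ independent of $b$. In short, the Cauchy--Schwarz factorisation decouples the ``which branch wins'' information (in $\chi_m$) from the weight $u_m$, which is precisely what your $L^1$ scheme cannot do.
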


Let us first derive an implication of this proposition while postponing its proof to Subsection~\ref{subsec:cancel-trans}.
Indeed, proceeding as in \cite[Subsection~2.3]{DV}, Theorem~\ref{thm:A} can be reduced to the proof of the following:

\begin{theorem}\label{mainthm0}
Let $T:I\to I$ be a $C^{1+\alpha}$ piecewise expanding Markov interval map, let $r:I\to\mathbb{R}^{+}$  be a piecewise 
$C^{1+\alpha}$ smooth roof function satisfying (H1) and (H2), and suppose that there exists no piecewise smooth map $\theta: I\to\mathbb{R}$ so that $r=\theta\circ T-\theta+\chi$ with $\chi$ constant on partition elements.
There are constants $b_5\geqslant b_3$, $\zeta\in (0,1)$ and $B>0$ so that
\begin{eqnarray*}
\|{\mathscr L}^\ell_s\|_{(b)}\leqslant \zeta^\ell
\end{eqnarray*}
 for all $s=\sigma+ib,\,\sigma\geqslant -\vep,\,|b|\geqslant b_5$ and every $\ell \geqslant B\log |b|$.
\end{theorem}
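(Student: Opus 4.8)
The plan is to deduce the statement from the $L^1$-contraction of Proposition~\ref{mainprop} by the standard Dolgopyat bootstrap, proceeding as in \cite[Subsection~2.3]{DV}: one upgrades the $L^1$-decay available on the ``flat'' cone to a genuine contraction in the $\|\cdot\|_{(b)}$-norm by playing it against the Lasota--Yorke inequality (Proposition~\ref{L-Y0}) and the bounded distortion of the Gibbs measure $\mu_\sigma$ (Remark~\ref{perron}). Fix $s=\sigma+ib$ with $|\sigma|<\vep$ and $|b|$ large, set $k=\lfloor\beta\log|b|\rfloor$, and for $v\in C^\alpha(I)$ with $\|v\|_{(b)}\le 1$ write $v_j={\mathscr L}_s^{jk}v$. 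Recall that ${\mathscr L}_\sigma\mathbf 1=\mathbf 1$ forces $\|v_{j+1}\|_{L^\infty(\mu_\sigma)}\le\|v_j\|_{L^\infty(\mu_\sigma)}$, and that Proposition~\ref{L-Y0}(2) together with Lemma~\ref{l-hypothesis} yields a uniform bound $\|{\mathscr L}_s^n\|_{(b)}\le C_{12}$, valid for every $n$ and every $|b|$.

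First I would record a smoothing estimate: for $w\in C^\alpha(I)$ and $m\ge 1$,
\begin{equation*}
\|{\mathscr L}_s^m w\|_{L^\infty(\mu_\sigma)}\ \le\ C_{13}\,\|w\|_{L^1(\mu_\sigma)}+C_{13}\,\lambda^{-\alpha m}\,|w|_{C^\alpha(I)},
\end{equation*}
obtained from $|{\mathscr L}_s^m w|\le{\mathscr L}_\sigma^m|w|$, the bounded distortion of the Gibbs Jacobian $J_{\mu_\sigma}T^m$, the comparison between the value of $|w|$ on a cylinder $\omega\in\cP^{(m)}$ and its $\mu_\sigma$-average there (the error being at most $|w|_{C^\alpha(I)}\diam(\omega)^\alpha$), and $\diam(\omega)\le C\lambda^{-m}$. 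Next I would run the dichotomy at each step: either $v_j$ is \emph{flat}, $\|v_j\|_{(b)}<\lambda^{\alpha k/16}\|v_j\|_{L^\infty(\mu_\sigma)}$, or it is \emph{oscillatory}, $\|v_j\|_{L^\infty(\mu_\sigma)}\le\lambda^{-\alpha k/16}\|v_j\|_{(b)}$. In the oscillatory case Proposition~\ref{L-Y0}(2) gives directly $\|v_{j+1}\|_{(b)}\le(C_8\lambda^{-\alpha k}+C_8\lambda^{-\alpha k/16})\|v_j\|_{(b)}\le\lambda^{-\alpha k/32}\|v_j\|_{(b)}$ once $|b|$ is large, i.e. a contraction at a universal per-iterate rate.

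In the flat case Proposition~\ref{mainprop} applies: $\|v_{j+1}\|_{L^1(\mu_\sigma)}\le\lambda^{-\xi k}\|v_j\|_{L^\infty(\mu_\sigma)}\le\lambda^{-\xi k}\|v_j\|_{(b)}$. To convert this into $\|\cdot\|_{(b)}$-decay I would first bound $|v_{j+1}|_{C^\alpha(I)}\le 2\,|b|^\alpha\,C_{12}$ (via Proposition~\ref{L-Y0}), then apply the smoothing estimate with $m=m(b)\asymp\log|b|$ chosen so that $C_{13}\lambda^{-\alpha m}|v_{j+1}|_{C^\alpha(I)}\le\lambda^{-\xi k}\|v_j\|_{(b)}$ (this is where \eqref{eq:xi} and the scale $k\asymp\log|b|$ are used), obtaining $\|{\mathscr L}_s^{k+m}v_j\|_{L^\infty(\mu_\sigma)}\le C\lambda^{-\xi k}\|v_j\|_{(b)}$; a last application of Proposition~\ref{L-Y0}(2) then promotes this $L^\infty$-smallness to $\|{\mathscr L}_s^{2k+m}v_j\|_{(b)}\le C'\lambda^{-\min\{\alpha,\xi\}k}\|v_j\|_{(b)}$. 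Since $\lambda^{-\min\{\alpha,\xi\}k}\asymp|b|^{-\min\{\alpha,\xi\}\beta\log\lambda}$ while $2k+m\asymp\log|b|$, the per-iterate contraction rate over this block is a universal constant $\theta<1$ (a fixed negative power of $|b|$, taken to the power $1/(c\log|b|)$, is a constant independent of $|b|$). The hard part of the argument is precisely this bookkeeping: because \eqref{eq:xi} forces $\beta$ to be small, a single block of Proposition~\ref{mainprop} gains only a tiny power of $|b|$, so the $L^1$-gain has to be spread over $\asymp\log|b|$ iterates and balanced against the polynomial-in-$|b|$ losses produced by the smoothing and Lasota--Yorke steps, all while keeping the final rate $\zeta$ and the threshold $B$ independent of $b$.

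To conclude, combining the two cases gives an integer $N_0=N_0(b)\asymp\log|b|$ and a universal $\theta\in(0,1)$ with $\|{\mathscr L}_s^{N_0}\|_{(b)}\le\theta^{N_0}$ for all $|b|\ge b_5$; iterating this and absorbing the remaining $\le N_0$ iterates into the uniform bound $\|{\mathscr L}_s^n\|_{(b)}\le C_{12}$ gives, for all $\ell\ge B\log|b|$ with $B$ sufficiently large, $\|{\mathscr L}_s^\ell\|_{(b)}\le C_{12}\,\theta^{N_0\lfloor\ell/N_0\rfloor}\le\zeta^\ell$ with $\zeta:=\theta^{1/2}$, which is the claimed estimate.
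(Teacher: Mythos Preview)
Your proof is correct and follows essentially the same Dolgopyat bootstrap as the paper: split into oscillatory/flat cases, use Lasota--Yorke directly in the oscillatory case, and in the flat case combine the $L^1$-contraction of Proposition~\ref{mainprop} with an $L^1\!\to\!L^\infty$ conversion followed by another Lasota--Yorke step. The one substantive difference is in how that conversion is carried out. The paper invokes a static interpolation inequality (\cite[Lemma~A.4]{BW}), $\|w\|_{L^\infty}\le C\varepsilon^{-1}\|w\|_{L^1}+\varepsilon^\alpha|w|_{C^\alpha}$, applied to $w={\mathscr L}^k_s v$ with $\varepsilon=\lambda^{-\xi k/8}$, and thereby gets contraction over a block of $2k$ iterates. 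You instead derive a dynamical smoothing estimate $\|{\mathscr L}^m_s w\|_{L^\infty}\le C\|w\|_{L^1}+C\lambda^{-\alpha m}|w|_{C^\alpha}$ directly from bounded distortion of the Gibbs Jacobian, and pay for it with $m\asymp\log|b|$ extra iterates, ending up with blocks of size $2k+m$. Your route is a little more self-contained (no external interpolation lemma needed) at the cost of a slightly longer block; the per-iterate rate is the same in both cases.

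One small imprecision: in your last paragraph you speak of a fixed $N_0$ with $\|{\mathscr L}^{N_0}_s\|_{(b)}\le\theta^{N_0}$, but the block length is $k$ in the oscillatory case and $2k+m$ in the flat case, and which applies depends on the function, not just on $b$. The correct statement (exactly as in the paper) is that for every $v$ either $\|{\mathscr L}^k_s v\|_{(b)}\le\tilde\zeta^k\|v\|_{(b)}$ or $\|{\mathscr L}^{2k+m}_s v\|_{(b)}\le\tilde\zeta^{2k+m}\|v\|_{(b)}$; one then iterates adaptively, choosing the block length at each step according to the current iterate, and absorbs the final remainder of length at most $2k+m$ into the uniform bound $C_{12}$. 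With this adjustment your argument goes through.
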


\begin{proof}
Let us deal separately with functions having large or small oscillation.
If an observable $v\in C^\alpha(I)$ satisfies
 $\|v\|_{L^{\infty}(\mu_{\sigma})}\leqslant\lambda^{-\frac{\alpha k}{16}}\|v\|_{(b)}$  for $k=\lfloor\beta\log|b|\rfloor$, the Lasota-Yorke's inequality on 
Proposition~\ref{L-Y0} implies that
\begin{eqnarray}
\|{\mathscr L}^k_sv\|_{(b)}&\leqslant & C_8\left(\lambda^{-\alpha n}\|v\|_{(b)}+\|v\|_{L^{\infty}(\mu_{\sigma})}\right) \nonumber\\
&\leqslant &C_8(\lambda^{-\alpha k}+\lambda^{-\frac{\alpha k}{16}})\|v\|_{(b)}. \label{est1f}
\end{eqnarray}
If alternatively  $\|v\|_{(b)}< \lambda^{\frac{\alpha k}{16}} \|v\|_{L^{\infty}(\mu_{\sigma})}$ 
then 
$ 
\| {\mathscr L}^k_sv\|_{L^1(\mu_{\sigma})}
 \leqslant \lambda^{-\xi k}\|v\|_{(b)}$ by Proposition~\ref{mainprop}.
Moreover, 
applying the Lasota-Yorke inequality twice and using  
$\|\cdot\|_{L^{\infty}(\mu_{\sigma})}\leqslant \|\cdot\|_{(b)}$ we get 
\begin{eqnarray}\label{estimateLY2}
\| {\mathscr L}^{2k}_sv\|_{(b)} 
	\leqslant 2C_8^2\lambda^{-\alpha k}\|v\|_{(b)}
	+C_8  
\| {\mathscr L}^{k}_sv\|_{L^{\infty}(\mu_{\sigma})}. 
\end{eqnarray}
In this point we shall use 
the interpolation result present in \cite[Lemma~A.4]{BW} (with $d=u=1$) which guarantees that there are $C_{10},\varepsilon_0>0$ such that
\begin{eqnarray*}\label{interpolation}
\|v\|_{L^{\infty}(\mu_{\sigma})}\leqslant C_{10}\varepsilon^{-1}\|v\|_{L^1(\mu_{\sigma})}+\varepsilon^\alpha |v|_{C^{\alpha}(\mu_{\sigma})}, 
	\quad \forall \varepsilon\in (0,\varepsilon_0).
\end{eqnarray*}
Altogether, inequality \eqref{estimateLY2}, the previous interpolation (for $\varepsilon=\lambda^{-\frac{\xi}{8}k}$) 
and the contraction of the $L^1$-norm 
$ \| {\mathscr L}^k_sv\|_{L^1(\mu_{\sigma})}
 \leqslant \lambda^{-\xi k}\|v\|_{(b)}$ imply that
\begin{align}
\| {\mathscr L}^{2k}_sv\|_{(b)} 
	& \leqslant \left(2C^2_8\lambda^{-\alpha k}+2  C_{10}C_8\lambda^{-\frac{\alpha\xi}{16}k}\right)\|v\|_{(b)}+C_8C_{10}\lambda^{\frac{\xi k}{8}}\|{\mathscr L}^k_sv\|_{L^{1}(\mu_{\sigma})} \nonumber\\
	&  \leqslant \left(2C_{10}C^2_8\lambda^{-\alpha k}+2 C_{10}C_8\lambda^{-\frac{\alpha\xi}{16}k} +C_8C_{10}\lambda^{-\frac{7\xi k}{8}} \right)\|v\|_{(b)}. \label{est2f}
\end{align}
Summoning the two estimates ~\eqref{est1f} and \eqref{est2f}, and choosing a large $b_5\geqslant b_3$, we conclude that
there exists $\tilde \zeta\in (0,1)$ so that for every observable $v\in C^\alpha(I)$
either
$
\|{\mathscr L}^{k}_s v \|_{(b)} \leqslant \tilde \zeta^k\|v\|_{(b)}
$
or 
$
\|{\mathscr L}^{2k}_s v \|_{(b)} \leqslant \tilde \zeta^{2k}\|v\|_{(b)}
$
 for all $s=\sigma+ib,\,\sigma\geqslant -\vep$ and $|b|\geqslant b_5$, where  $k=\lfloor\beta\log|b|\rfloor$. 
 
 This is enough to conclude the proof of the theorem. 
 Indeed, fix an arbitrary $v\in C^\alpha(I)$. If $\ell\geqslant 2\lfloor \beta \log |b|\rfloor  \geqslant 2n$ then by integer division one can write
$\ell=\sum_{i=1}^{q} \ell_i +r$ for some $q\geqslant 1$ and $0\leqslant r<2n$, where $v_0=v$, $v_{i+1}={\mathscr L}^{\ell_i}_s v_i$
for each $0\leqslant i < \ell$ and
$$
\ell_i=
\begin{cases}
\begin{array}{ll}
k &, \text{if } \|v\|_{L^{\infty}(\mu_{\sigma})}\leqslant\lambda^{-\frac{\xi\alpha k}{16}}\|v\|_{(b)}\\
2k &, \text{otherwise.}   
\end{array}
\end{cases}
$$
Then, it follows that
\begin{eqnarray*}
\|{\mathscr L}^\ell_s v\|_{(b)}
	\leqslant \|{\mathscr L}^r_s\|_{(b)} \; \prod_{i=0}^{q-1} \|v_i\|_{(b)} 
	\leqslant \Big(\frac{\|{\mathscr L}_s\|_{(b)}}{\tilde \zeta}\Big)^{2n}\; \tilde \zeta^{\ell} \; \|v\|_{(b)} 
	\leqslant \zeta^\ell \; \|v\|_{(b)}
\end{eqnarray*}
if one takes $\zeta=\tilde \zeta^{1/2}$ and $\ell\geqslant B\log |b|$ for some $B\gg \beta$.
This completes the proof of the theorem.
\end{proof}

\subsection{Contraction of the $L^1$-norm}\label{subsec:cancel-trans} 
In this subsection we prove Proposition~\ref{mainprop}. Actually we will show how to use the notion of transversality to establish contraction in $L^2$~-~norm for iterates of twisted transfer operator when applied over suitable functions with some controlled oscillation and then apply H\"older's inequality to obtain the result.

{Recall that $\delta>0$ satisfies \eqref{eq:def-consts0} and ~\eqref{cancel1}, $\Delta>0$ (depending on $\delta$) was determined in Proposition~\ref{prop:transv-implies-wUNI} and $b_2$ was defined in \eqref{defb2}, and $n\geqslant 1$ be given by ~\eqref{eq:def-consts}.
The starting point is the following standard consequence of the cancellations in Lemma~\ref{lemm:cancel-N}}  (see e.g. \cite[Lemma~2.7]{BV}). 
We have the following initial estimate for functions in the cone $\mathcal{C}_b$ defined by \eqref{def:coneCb} with a constant $C_0$ defined in ~\eqref{defC0}.

\begin{corollary}\label{corollarycancel}
If {$|b|> b_2$}, $(u,v)\in\mathcal{C}_b$ and 
$\chi=\chi(u,v)$ is the bump function described in \eqref{def-xi} then 
$|\mathscr L^{n}_sv|\leqslant \mathscr L^{n}_{\sigma}(\chi u)$ 
for every $s=\sigma+ib$ so that\, $|\sigma|<\varepsilon$.
\end{corollary}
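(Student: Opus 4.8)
The plan is to run the standard pointwise estimate for the twisted transfer operator on the partition $\mathcal{Q}=\mathcal{Q}_b$ constructed above, using the bump function $\chi=\chi(b,u,v,n,\eta)$ to absorb the cancellation gained on each ball $B_j=B(x_j,\delta/2|b|)$. Fix $x\in I$. Writing
\[
(\mathscr L^{n}_sv)(x)=\lambda_\sigma^{-n}f_\sigma^{-1}(x)\sum_{\omega\in\mathcal P^{(n)}}A_{s,h_\omega,n}(f_\sigma v)(x),
\]
I would split the sum over $\omega\in\mathcal P^{(n)}$ according to whether $x$ lies in the domain of one of the distinguished pairs $\{\tilde\omega_j,\hat\omega_j\}$ attached to the cell $Q_j\in\mathcal Q$ containing $x$. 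For the two branches $\tilde\omega_j,\hat\omega_j$ we invoke the cancellation provided by Lemma~\ref{lemm:cancel-N} together with \eqref{omegawinners}: on $B(x_j,\delta/|b|)\cap\mbox{Dom}(\psi_{\overline{\omega}_{x_j},\omega_{x_j}})$ one has, say in the first alternative,
\[
|A_{s,h_{\tilde\omega_j},n}(v)+A_{s,h_{\hat\omega_j},n}(v)|\leqslant \eta_0 A_{\sigma,h_{\tilde\omega_j},n}(u)+A_{\sigma,h_{\hat\omega_j},n}(u),
\]
so that the pair contributes at most $\eta_0 A_{\sigma,h_{\tilde\omega_j},n}(u)+A_{\sigma,h_{\hat\omega_j},n}(u)$ instead of $A_{\sigma,h_{\tilde\omega_j},n}(u)+A_{\sigma,h_{\hat\omega_j},n}(u)$; here I also use $|v|\leqslant u$, which holds since $(u,v)\in\mathcal C_b$, and the fact that $|A_{s,h,n}(f_\sigma v)|\leqslant A_{\sigma,h,n}(f_\sigma u)$ pointwise because $|v|\leqslant u$ and $|e^{-ibS_nr}|=1$. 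For every other branch $\omega$ we bound $|A_{s,h_\omega,n}(f_\sigma v)|\leqslant A_{\sigma,h_\omega,n}(f_\sigma u)$ trivially.

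The key bookkeeping point is matching these gains with the values of $\chi$. By construction (items (1)--(2) in \eqref{def-xi}), $\chi\circ h_{\tilde\omega_j}$ equals $\eta$ on $B(x_j,\delta/6|b|)$ and $\chi\equiv1$ off $\bigcup_j h_{\tilde\omega_j}(B(x_j,\delta/2|b|))$; since $\eta\in[\eta_0,1]$ and $\chi\geqslant\eta_0$ everywhere, replacing $A_{\sigma,h_{\tilde\omega_j},n}(f_\sigma u)$ by $A_{\sigma,h_{\tilde\omega_j},n}(f_\sigma\chi u)$ in the $\tilde\omega_j$ term loses exactly the factor that the cancellation already supplied, while on all remaining branches $\chi\leqslant1$ only helps. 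The point $x$ meets the ball $B(x_j,\delta/6|b|)$ associated to at most one cell $Q_j$ (the cells of $\mathcal Q$ are disjoint, and $B_j\subset Q_j$ by \eqref{defb2} and one-dimensionality), so the gains do not need to be summed over $j$; either $x$ is in such a ball, and then one pair of branches is discounted by $\eta_0$, or $x$ is not, and then $\chi\equiv1$ on all the relevant preimages and the inequality $|\mathscr L^n_sv|\leqslant\mathscr L^n_\sigma(\chi u)$ is immediate from $|v|\leqslant u\leqslant \chi u/\eta_0$ — more precisely from $|v|\le u=\chi u$ at those preimages. Assembling, in the first case
\[
|(\mathscr L^n_sv)(x)|\leqslant\lambda_\sigma^{-n}f_\sigma^{-1}(x)\Big(\eta_0 A_{\sigma,h_{\tilde\omega_j},n}(f_\sigma u)(x)+\sum_{\omega\neq\tilde\omega_j}A_{\sigma,h_\omega,n}(f_\sigma u)(x)\Big)\leqslant(\mathscr L^n_\sigma(\chi u))(x),
\]
using $\eta\leqslant\eta_0^{?}$ — rather, using that $\chi\circ h_{\tilde\omega_j}(x)\le 1$ forces nothing, and $\chi\circ h_{\tilde\omega_j}(x)=\eta\ge\eta_0$ together with the gained factor $\eta_0$ gives the term $A_{\sigma,h_{\tilde\omega_j},n}(f_\sigma\chi u)(x)\ge \eta_0 A_{\sigma,h_{\tilde\omega_j},n}(f_\sigma u)(x)$, which is the comparison we need in the correct direction once we note $\chi\ge \eta_0$ makes $\mathscr L^n_\sigma(\chi u)\ge \eta_0\,\mathscr L^n_\sigma u$ and isolating the single discounted branch suffices.

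The delicate step — and the main obstacle — is precisely the interplay between two scales: the cancellation in Lemma~\ref{lemm:cancel-N} holds only on $B(x_1,\delta/|b|)\cap\mbox{Dom}(\psi_{\overline\omega,\omega})$, whereas the bump function is made to equal $\eta$ on the smaller ball $B(x_j,\delta/6|b|)$ and to return to $1$ outside the larger preimage $h_{\tilde\omega_j}(B(x_j,\delta/2|b|))$; one must check that $B(x_j,\delta/2|b|)$ is genuinely contained in the set where the Lemma~\ref{lemm:cancel-N} inequality is valid (which is where \eqref{defb2}, the refinement $\mathcal Q$ of $\mathcal P$, and the one-dimensionality enter), and that on the annulus $B(x_j,\delta/2|b|)\setminus B(x_j,\delta/6|b|)$, where $\chi$ transitions, the trivial bound $|v|\leqslant u$ together with $\chi\ge\eta_0$ and the cancellation is still enough. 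I would also record explicitly that the transversality hypothesis needed to invoke Lemma~\ref{lemm:cancel-N} at each $z_j\in Q_j$ is automatic by Remark~\ref{transvxUNI} and the choice \eqref{eq:def-consts0} of $n_2$, so the corollary is unconditional given $|b|>b_2$.
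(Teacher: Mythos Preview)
Your approach is correct and is exactly the paper's argument: split on whether $x$ lies in some $B(x_j,\delta/2|b|)$, use $|v|\leqslant u$ and $\chi\equiv 1$ on all preimages when it does not, and when it does combine the cancellation from Lemma~\ref{lemm:cancel-N} on the pair $\{\tilde\omega_j,\hat\omega_j\}$ with the pointwise bound $\chi\geqslant\eta\geqslant\eta_0$ on the winning branch and $\chi=1$ on every other branch. The paper's version is three lines because it makes the case split at the scale $\delta/2|b|$ (not $\delta/6|b|$) and never separates the annulus; your mid-proof hesitation about the direction of the inequality $\eta_0\,A_{\sigma,h_{\tilde\omega_j},n}(u)\leqslant A_{\sigma,h_{\tilde\omega_j},n}(\chi u)$ is resolved exactly as you eventually say---by $\chi\geqslant\eta_0$---so you should delete the self-corrections and state that comparison once.
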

\begin{proof} 
This is a simple consequence of the cancellations in  \eqref{omegawinners} and definition of $\chi$ in ~\eqref{def-xi}. As $(u,v)\in\mathcal{C}_b$ then $0\leqslant|v|\leqslant u$. Moreover, using the previous terminology, if $x\in I$ does not belong to any ball $B(x_j, \delta/2|b|)$ then $\chi(h_\omega(x))=1$ for every $\omega\in \mathcal P^{(n)}$, and so $|\mathscr L^{n}_sv(x)|\leqslant \mathscr L^{n}_{\sigma}(u)(x) = \mathscr L^{n}_{\sigma}(\chi u)(x)$.
Otherwise, $x\in B(x_j, \delta/2|b|)$ for some $1\leqslant j \leqslant \kappa$ and
\begin{eqnarray*}
|\mathscr L^{n}_sv(x)|
	\leqslant\frac{1}{\lambda^n_{\sigma}f_{\sigma}(x)}
		[\eta_0 A_{\sigma,h_{\widetilde{\omega}_x,n}}(u)(x)
		+ A_{\sigma,h_{\hat {\omega}_x,n}}(u)(x)
		+\sum_{\substack{\omega\in\mathcal{P}^{(n)} \\ \omega\notin \{\widetilde{\omega}_x,\, \hat{\omega}_x\}}} A_{\sigma,h_{\omega},n}(|v|)(x)]
		\leqslant \mathscr L^{n}_{\sigma}(\chi u)(x).
\end{eqnarray*} 
This proves the corollary.
\end{proof}

The latter is used in the proof of the invariance of the cone condition. 

\begin{lemma}\label{invarcone}
If $s=\sigma+ib\,$ with $|\sigma|<\varepsilon$ and  {$|b|> b_2$}   then 
$
(\mathscr  L^{n}_{\sigma}(\chi u), \mathscr L^{n}_s v)\in \mathcal{C}_b
$
for every pair $(u,v)\in \cC_b$.
\end{lemma}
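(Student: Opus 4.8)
The plan is to check, one at a time, the four conditions defining $\mathcal{C}_b$ in \eqref{def:coneCb} for the pair $(u',v'):=(\mathscr{L}^{n}_{\sigma}(\chi u),\,\mathscr{L}^{n}_{s}v)$ with $n=n_1+n_2$. Two of them are immediate: $u'>0$ because $\chi\geqslant\eta>0$, $u>0$ and $\mathscr{L}_{\sigma}$ is positivity preserving; and $0\leqslant|v'|\leqslant u'$ is precisely Corollary~\ref{corollarycancel}. So everything reduces to the two H\"older-type estimates $|\log u'|_{C^{\alpha}}\leqslant C_0|b|^{\alpha}$ and $|v'(x)-v'(y)|\leqslant C_0|b|^{\alpha}u'(y)\,d(x,y)^{\alpha}$.

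For the first of these I would write $f_{\sigma}u'=\lambda_{\sigma}^{-n}P^{n}_{\sigma}(f_{\sigma}\chi u)=\sum_{\omega\in\mathcal{P}^{(n)}}\tilde w_{\omega}\cdot(f_{\sigma}\chi u)\circ h_{\omega}$, where $\tilde w_{\omega}:=\lambda_{\sigma}^{-n}e^{S_{n}(\phi-\sigma r)\circ h_{\omega}}$, and use the elementary fact that $|\log(\sum_{\omega}a_{\omega})|_{C^{\alpha}}\leqslant\sup_{\omega}|\log a_{\omega}|_{C^{\alpha}}$ for positive summands. Since every inverse branch of $T^{n}$ contracts by $\lambda^{-n}$, one has $|\psi\circ h_{\omega}|_{C^{\alpha}}\leqslant\lambda^{-\alpha n}|\psi|_{C^{\alpha}}$; hence $|\log(u\circ h_{\omega})|_{C^{\alpha}}\leqslant\lambda^{-\alpha n}C_0|b|^{\alpha}$ by the cone hypothesis, $|\log(f_{\sigma}\circ h_{\omega})|_{C^{\alpha}}$ is controlled by Remark~\ref{pertubationop}, and the bounded distortion estimate already used in the proof of Lemma~\ref{l-hypothesis} bounds $|\log\tilde w_{\omega}|_{C^{\alpha}}$ by $(1-\lambda^{-\alpha})^{-1}|\phi-\sigma r|_{C^{\alpha}}$. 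Because $\chi$ takes values in $[\eta,1]$ with $|\chi'|\leqslant|b|$, distributing the H\"older exponent gives $|\log(\chi\circ h_{\omega})|_{C^{\alpha}}\leqslant\eta^{-1}(1-\eta)^{1-\alpha}(\lambda^{-n}|b|)^{\alpha}$, an arbitrarily small multiple of $|b|^{\alpha}$ once $\eta$ is taken close to $1$ (recall that $n=n_1+n_2$ is fixed). Adding these contributions, plus $|\log f_{\sigma}|_{C^{\alpha}}$ from $\log u'=\log(f_{\sigma}u')-\log f_{\sigma}$, and invoking the smallness of $\lambda^{-\alpha n_1}$ from \eqref{defxi}--\eqref{defxi2} and the definition \eqref{defC0} of $C_0$, the total comes out $\leqslant C_0|b|^{\alpha}$ for $b_2$ large enough.

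For the second estimate I would expand $v'(x)-v'(y)$ exactly along the lines of the proof of the Lasota--Yorke inequality (Proposition~\ref{L-Y0}): writing $v'=\lambda_{\sigma}^{-n}f_{\sigma}^{-1}\sum_{\omega}A_{s,h_{\omega},n}(f_{\sigma}v)$ and telescoping, one gets the four terms corresponding to \eqref{LY1}--\eqref{LY4}, plus the harmless terms produced by the difference $f_{\sigma}^{-1}(x)-f_{\sigma}^{-1}(y)$. Using $|v|\leqslant u$, Lemma~\ref{lemma1} (so that $S_{n}r\circ h_{\omega}$ has Lipschitz constant at most $C_7/2$), the inequality $|e^{it}-1|\leqslant2|t|^{\alpha}$, and the cone hypothesis $|v(x)-v(y)|\leqslant C_0|b|^{\alpha}u(y)d(x,y)^{\alpha}$ (which after pull-back contributes a factor $\lambda^{-\alpha n}C_0|b|^{\alpha}$), each term is bounded by a constant times $|b|^{\alpha}\,\mathscr{L}^{n}_{\sigma}(u)(y)\,d(x,y)^{\alpha}$; the only genuinely $|b|^{\alpha}$-sized constant is the $2^{1-\alpha}C_7^{\alpha}$ coming from the oscillation of $e^{-ibS_{n}r}$. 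Since $\chi\geqslant\eta$ gives $\mathscr{L}^{n}_{\sigma}(u)\leqslant\eta^{-1}\mathscr{L}^{n}_{\sigma}(\chi u)=\eta^{-1}u'$, I would then collect all constants and verify, using again \eqref{defxi}--\eqref{defxi2}, $1-\eta$ small, and \eqref{defC0} (whose summand $2(|\phi|_{C^{\alpha}}+|r|_{C^{\alpha}})(1-\lambda^{-\alpha})$ is reserved for the $|b|$-independent pieces and whose summand $4|f_0^{-1}|_{\infty}|f_0|_{C^{\alpha}}$ for the $f_{\sigma}$ pieces), that their sum is at most $C_0|b|^{\alpha}$; this yields the fourth cone condition and closes the proof.

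The hard part will be precisely this last round of constant bookkeeping: one needs the total of all the constants (phase oscillation, $\sigma$- and $\phi$-oscillations, the pulled-back H\"older constant of $v$, the contribution of $f_{\sigma}$, and the contribution of the bump function $\chi$) to be $\leqslant C_0|b|^{\alpha}$ and not merely $O(|b|^{\alpha})$ — which is exactly why $C_0$ was defined by \eqref{defC0}, why $\eta$ was fixed close to $1$, and why $n_1$ was chosen via \eqref{defxi}--\eqref{defxi2}. The only feature not present in the classical argument is the bump function $\chi$, whose derivative is bounded only by $|b|$ rather than by a small quantity; it is handled by the observation that $\chi\in[\eta,1]$ with $\eta$ near $1$, so that its contribution to both H\"older estimates is $O((1-\eta)^{1-\alpha})\,|b|^{\alpha}$, negligible once $\eta$ is close enough to $1$.
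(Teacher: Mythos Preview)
Your proposal is correct and follows essentially the same route as the paper: the first two cone conditions are disposed of via positivity and Corollary~\ref{corollarycancel}, the log-H\"older bound on $u'$ is obtained by writing $u'$ as a sum of positive products and bounding each $|\log(\cdot)|_{C^{\alpha}}$-piece, and the increment bound on $v'$ is obtained by the same four-term telescoping used in Proposition~\ref{L-Y0}, then absorbing $\mathscr L^{n}_{\sigma}u$ into $u'=\mathscr L^{n}_{\sigma}(\chi u)$ via $\chi\geqslant\eta$.

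The one place where your argument and the paper's differ is the treatment of the bump function. You bound $|\log(\chi\circ h_{\omega})|_{C^{\alpha}}$ by $O((1-\eta)^{1-\alpha})|b|^{\alpha}$ and invoke $\eta$ close to $1$; the paper instead uses only $\chi\in[\tfrac12,1]$ and $|\chi'|\leqslant|b|$ to get $|\log\chi|_{C^{\alpha}}\leqslant 2|b|^{\alpha}$ via $\min\{1,|b|d\}\leqslant|b|^{\alpha}d^{\alpha}$, and then makes this contribution small by the contraction factor $\lambda^{-\alpha n}$ coming from the pull-back by $h_{\omega}$, exactly as for the $u$-term. This is why the paper needs the condition \eqref{defxi1} in the form $(2+C_0)\lambda^{-\alpha n}<(|\phi|_{C^{\alpha}}+|r|_{C^{\alpha}})(1-\lambda^{-\alpha})$ (the ``$2$'' is the $\chi$-contribution), whereas your approach would instead need a further smallness constraint on $1-\eta$. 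Both mechanisms close the bookkeeping; the paper's has the mild advantage of not imposing an extra constraint on $\eta$ beyond $|\chi'|\leqslant|b|$, while yours makes more transparent why $\chi$ causes no trouble.
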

\begin{proof}
Given $(u,v)\in \cC_b$ let $\chi=\chi(b,u,v,n,\eta)$ be the bump-function given by \eqref{def-xi} and set $u_n:=\mathscr  L^{n}_{\sigma}(\chi u)$ and $v_n:=\mathscr L^{n}_s v$. We will prove that $(u_n, v_n)\in \mathcal{C}_b$. 
By positivity of $\mathscr L_{\sigma}$ and Corollary~\ref{corollarycancel} it is simple to check that $u_n>0$ and $|v_n|\leqslant u_n$.
We claim that $|\log u_n|_{C^{\alpha}}\leqslant C_0 |b|^{\alpha}$. Given $x,y\in I$, we have
\begin{eqnarray*}
\frac{u_n(x)}{u_n(y)}=\frac{f_{\sigma}(y)}{f_{\sigma}(x)}\frac{\sum_{\omega\in\mathcal{P}^{(n)}}A_{\sigma,h_{\omega},n}(f_{\sigma}\chi u)(x)}{\sum_{\omega\in\mathcal{P}^{(n)}}A_{\sigma,h_{\omega},n}(f_{\sigma}\chi u)(y)}
\end{eqnarray*}
and
\begin{eqnarray*}
\left|\frac{(A_{\sigma, h_{\omega}, n}(\chi u))(x)}{(A_{\sigma, h_{\omega}, n}(\chi u))(y)}\right|=\left|\frac{e^{S_n(\phi-\sigma r)(h_{\omega}(x))}}{e^{S_n(\phi-\sigma r)(h_{\omega}(y))}}\frac{f_{\sigma}(h_{\omega}(x))}{f_{\sigma}(h_{\omega}(y))}\frac{\chi(h_{\omega}(x))}{\chi(h_{\omega}(y))}\frac{u(h_{\omega}(x))}{u(h_{\omega}(y))}\right|.
\end{eqnarray*}
We need to estimate each term on the right-hand side above. Firstly, using Remark~\ref{pertubationop} one has $|\log f_{\sigma}|_{C^{\alpha}(I)}\leqslant 4|f^{-1}_0|_{\infty}|f_0|_{C^{\alpha}(I)}$ and, consequently, $\frac{f_{\sigma}(y)}{f_{\sigma}(x)}\leqslant \exp(4|f^{-1}_0|_{\infty}|f_0|_{C^{\alpha}(I)}d(x,y)^{\alpha})$. Now, since $\chi\in [\frac{1}{2},1]$ and $|\chi'|\leqslant |b|$ (hence $|(\log\chi)'|\leqslant 2|b|$ and $|\log\chi(x)-\log\chi(y)|\leqslant\log 2<1$), using the mean value theorem 
we conclude that  
\begin{eqnarray*}
|\log\chi(x)-\log\chi(y)|\leqslant 2\min\{1,|b|d(x,y)\}\leqslant 2|b|^{\alpha}d(x,y)^{\alpha}.
\end{eqnarray*}
Altogether, using $|\log u|_{C^{\alpha}}\leqslant C_0 |b|^{\alpha}$, bounded distortion and that $|\sigma|<\vep$,
\begin{align*}
\left|\frac{(A_{\sigma, h_{\omega}, n}(\chi u))(x)}{(A_{\sigma, h_{\omega}, n}(\chi u))(y)}\right|
	& \leqslant  \exp[(|\phi|_{C^{\alpha}(I)}+|r|_{C^{\alpha}(I)})(1-\lambda^{-\alpha}) \, d(x,y)^{\alpha}] \\
	&  \times \exp[(4|f^{-1}_0|_{\infty}|f_0|_{C^{\alpha}(I)}+2|b|^{\alpha}\lambda^{-n\alpha}) \, d(x,y)^{\alpha}] \\
	& \times\exp[C_0|b|^{\alpha}\lambda^{-n\alpha}\, d(x,y)^{\alpha}]. 
\end{align*}
From the choice of $n_1$ in ~\eqref{defxi1} and definition of $C_0$ in ~\eqref{defC0}, we have that $(2+C_0)\lambda^{-\alpha n}<(|\phi|_{C^{\alpha}(I)}+|r|_{C^{\alpha}(I)})(1-\lambda^{-\alpha})$, and so $|\log u_n|_{C^{\alpha}}\leqslant C_0 |b|^{\alpha}$.
It remains to prove 
\begin{eqnarray}\label{invarcone1}
|v_n(x)-v_n(y)|\leqslant C_0 |b|^{\alpha}u_n(y)d(x,y)^{\alpha}
	\quad\text{for all $x,y\in I$.}
\end{eqnarray}
The argument to show \eqref{invarcone1} is identical to the proof of Proposition \ref{L-Y0}. First of all we write $v_n(x)-v_n(y)=(\mathscr L^n_sv)(x)-(\mathscr L^n_sv)(y)$ as
\begin{align}
& \lambda^{-n}_{\sigma} f_{\sigma}(x)^{-1}
	 \left[\left(\sum_{\omega\in\mathcal{P}^{(n)}}A_{s,h_{\omega},n}(f_{\sigma}v)\right)(x)-\left(\sum_{\omega\in\mathcal{P}^{(n)}}A_{s,h_{\omega},n}(f_{\sigma}v)\right)(y)\right]\label{invarcone2}\\
	&+\Big(\frac{f_{\sigma}(y)}{f_{\sigma}(x)}-1\Big) \lambda^{-n}_{\sigma}f_{\sigma}(y)^{-1}\left(\sum_{\omega\in\mathcal{P}^{(n)}}A_{s,h_{\omega},n}(f_{\sigma}v)\right)(y)\label{invarcone2.1}.
\end{align}
In order to estimate the term \eqref{invarcone2} we write
\begin{align}
A_{s,h_{\omega},n}(f_{\sigma}v)(x)-A_{s,h_{\omega},n}(f_{\sigma}v)(y)
&=e^{-ibS_nr(h_{\omega}x)}(e^{-\sigma S_nr(h_{\omega}x)}-e^{-\sigma S_nr(h_{\omega}y)})(ve^{S_n\phi})(h_{\omega}x)\label{invariance3}\\
&+e^{-\sigma S_nr(h_{\omega}y)}e^{-ib S_nr(h_{\omega}x)} (v(h_{\omega}x)-v(h_{\omega}y)) e^{S_n\phi(h_{\omega}x)} \label{invariance4}\\
&+e^{-\sigma S_nr(h_{\omega}y)}e^{-ib S_nr(h_{\omega}x)}v(h_{\omega}y)(e^{S_n\phi(h_{\omega}x)}-e^{S_n\phi(h_{\omega}y)})\label{invariance5}\\
&+(e^{-ibS_nr(h_{\omega}x)}-e^{-ibS_nr(h_{\omega}y)})e^{S_n(\phi-\sigma r)(h_{\omega}y)}v(h_{\omega}y).\label{invariance6}
\end{align}
Using the same estimates present in Proposition \ref{L-Y0} we conclude that
\begin{eqnarray*}
e^{-ibS_nr(h_{\omega}x)}(e^{-\sigma S_nr(h_{\omega}x)}-e^{-\sigma S_nr(h_{\omega}y)})(ve^{S_n\phi})(h_{\omega}x)\leqslant C_7|\sigma|(e^{S_n(\phi-\sigma r)}|v|)(h_{\omega}x)d(x,y).
\end{eqnarray*}
Recall that $(u,v)\in \cC_b$ and for this reason altogether the choice of $b_1$ we gain
\begin{eqnarray*}
|v(h_{\omega}(x))|&\leqslant & |v(h_{\omega}(y))|+C_0|b|^{\alpha}u(h_{\omega}(y)) \, d(h_{\omega}(x),h_{\omega}(y))^{\alpha}\\
&\leqslant &(1+C_0\lambda^{-n\alpha}|b|^{\alpha}) \, u(h_{\omega}(y))
\leqslant 2|b|^{\alpha}u(h_{\omega}(y)).
\end{eqnarray*}
Hence, the term \eqref{invariance3} is bounded above by
\begin{eqnarray*}
2C_7e^{{n|\phi|}_{C^{\alpha}(I)}}|\sigma||b|^{\alpha}(e^{S_n(\phi-\sigma r)}u)(h_{\omega}y)d(x,y).
\end{eqnarray*}
Similarly, we can use that $|v|_{{C}^{\alpha}(I)}\leqslant C_0|b|^{\alpha}|u|$ to conclude that \eqref{invariance4} is bounded by 
\begin{align*}
&\lambda^{-\alpha n}|v|_{{C}^{\alpha}(I)} {e^{\sigma_0\frac{C_7}{2}\diam(I)}}\,(e^{S_n(\phi-\sigma r)})(h_{\omega}x)\,d(x,y)^{\alpha}\\
&\leqslant\lambda^{-\alpha n}e^{{n|\phi-\sigma r|}_{C^{\alpha}(I)}}C_0{e^{\sigma_0\frac{C_7}{2}\diam(I)}}\,|b|^{\alpha}(e^{S_n(\phi-\sigma r)}u)(h_{\omega}y)\,d(x,y)^{\alpha}.
\end{align*}
Additionally, the expression \eqref{invariance5} is bounded by $\frac{1}{1-\lambda^{-\alpha}} |\phi|_{{C}^{\alpha}(I)} \, (e^{S_n(\phi-\sigma r)}u)(h_{\omega}y)\, d(x,y)^{\alpha}$.
For last, the term \eqref{invariance6} is bounded by 
\begin{eqnarray*}
2^{1-\alpha} C_7^\alpha \,|b|^{\alpha} \, (e^{S_n(\phi-\sigma r)}|v|)(h_{\omega}x) \, d(x,y)^{\alpha}\leqslant 2^{1-\alpha} C_7^\alpha \,e^{{n|\phi-\sigma r|}_{C^{\alpha}(I)}}\,|b|^{\alpha}(e^{S_n(\phi-\sigma r)}u)(h_{\omega}y).
\end{eqnarray*}
Now, using that $|f_{\sigma}|_{\infty}\leqslant 2|f_0|_{\infty},\, \,|f^{-1}_{\sigma}|_{\infty}\leqslant 2|f^{-1}_0|_{\infty}$ and $\chi$ takes values on the interval $(\frac{1}{2},1]$ we deduce that
\begin{eqnarray*}
|A_{s,h_{\omega},n}(f_{\sigma}v)(x)-A_{s,h_{\omega},n}(f_{\sigma}v)(y)|\leqslant 8C|f_0|_{\infty}|f^{-1}_0|_{\infty}|b|^{\alpha} 
	\, A_{\sigma,h_{\omega},n}(f_{\sigma}\chi u)(y) \, d(x,y)^{\alpha}.
\end{eqnarray*} 
The same reasoning ensures that \eqref{invarcone2} is bounded above by 
$
32C|f_0|^{2}_{\infty}|f^{-1}_0|^{2}_{\infty}|b|^{\alpha}u_n(y)d(x,y)^{\alpha}.
$
and \eqref{invarcone2.1} is bounded above by
$8|f_0|_{C^{\alpha}(I)}|f^{-1}_0|_{\infty}|b|^{\alpha}u_n(y)d(x,y)^{\alpha}.$
Altogether we obtain $|v_n(x)-v_n(y)|\leqslant C_0 |b|^{\alpha}u_n(y)d(x,y)^{\alpha}$, which ensures that $(u_n,v_n)\in \mathcal C_b$. This completes the proof
of the lemma.
\end{proof}
The following lemma furnishes the last ingredient for the proof of Proposition~\ref{mainprop}.

\begin{lemma}\label{L2contrac}
There are $\vep>0$ and $0<\tau<1$   such that
\begin{eqnarray*}
\int |\mathscr  L^{mn}_sv|^2d\mu_\sigma 
	\leqslant  \tau^m\|v\|_{L^{\infty}(\mu_\sigma)}^2
\end{eqnarray*}
for all $m\geqslant 1,\,s=\sigma+ib,\, |\sigma|<\varepsilon,\, |b|\geqslant b_1$, and all $v\in C^{\alpha}(I)$ satisfying $|v|_{C^{\alpha}(I)}\leqslant C_0|b|^{\alpha}\|v\|_{L^{\infty}(\mu_\sigma)}$.
\end{lemma}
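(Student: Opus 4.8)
The plan is to iterate the cone argument built in the previous section. First I would observe that, by the definition of the cone $\mathcal C_b$ in \eqref{def:coneCb}, the hypothesis $|v|_{C^\alpha(I)}\le C_0|b|^\alpha\|v\|_{L^\infty(\mu_\sigma)}$ essentially places the pair $(|v|,v)$ (after normalizing $u$ upward to a genuine element $u\ge |v|$ with $|\log u|_{C^\alpha}\le C_0|b|^\alpha$, e.g. $u=\|v\|_{L^\infty}\,e^{C_0|b|^\alpha\,\text{(suitable function)}}$ or simply $u\equiv\|v\|_{L^\infty(\mu_\sigma)}$ if that already dominates $|v|$ with the right H\"older control) inside $\mathcal C_b$. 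The point is that after possibly enlarging the constant in front, $(u_0,v_0):=(u,v)\in\mathcal C_b$ with $\|u_0\|_{L^\infty(\mu_\sigma)}\le \text{const}\cdot\|v\|_{L^\infty(\mu_\sigma)}$.

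Next, define inductively $u_{k}:=\mathscr L^{n}_{\sigma}(\chi_{k-1}u_{k-1})$ and $v_{k}:=\mathscr L^{n}_s v_{k-1}$, where $\chi_{k-1}=\chi(b,u_{k-1},v_{k-1},n,\eta)$ is the bump function from \eqref{def-xi}. By Lemma~\ref{invarcone} the pair $(u_k,v_k)$ stays in $\mathcal C_b$ at every step, so $|v_k|\le u_k$ pointwise; in particular $|\mathscr L^{mn}_sv|=|v_m|\le u_m=\mathscr L^{n}_\sigma(\chi_{m-1}u_{m-1})$. Now I would estimate the $L^2(\mu_\sigma)$-norm of $u_m$: since $\mu_\sigma$ is $\mathscr L_\sigma$-conformal (more precisely $\int \mathscr L^{n}_\sigma g\,d\mu_\sigma=\int g\,d\mu_\sigma$ because $\mathscr L_\sigma^*\mu_\sigma=\mu_\sigma$, as $\mathscr L_\sigma\mathbf 1=\mathbf 1$), and $\mathscr L_\sigma$ is a positive operator, I can apply the Cauchy--Schwarz / Jensen-type inequality $(\mathscr L^{n}_\sigma g)^2\le \mathscr L^{n}_\sigma(g^2)$ valid because $\mathscr L^{n}_\sigma\mathbf 1=\mathbf 1$. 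This gives
\begin{eqnarray*}
\int u_{k}^2\,d\mu_\sigma=\int \big(\mathscr L^{n}_\sigma(\chi_{k-1}u_{k-1})\big)^2 d\mu_\sigma\le \int \mathscr L^{n}_\sigma\big(\chi_{k-1}^2 u_{k-1}^2\big)\,d\mu_\sigma=\int \chi_{k-1}^2\,u_{k-1}^2\,d\mu_\sigma.
\end{eqnarray*}
Since $\chi_{k-1}$ equals $\eta<1$ on each of the balls $h_{\tilde\omega_j}(B(x_j,\delta/6|b|))$ and these balls (over $j=1,\dots,\kappa$) occupy a definite fraction of phase space --- this is exactly where Corollary~\ref{comparijpropfinite}, the adapted partition $\mathcal Q$ of Lemma~\ref{le:partQ}, and the mild Federer property of Lemma~\ref{le:cancellations-measure} enter, with $w=u_{k-1}^2$ whose log-H\"older seminorm is controlled by a constant multiple of $|b|^\alpha$ (as $(u_{k-1},v_{k-1})\in\mathcal C_b$) --- one gets $\int \chi_{k-1}^2 u_{k-1}^2\,d\mu_\sigma\le \tau\int u_{k-1}^2\,d\mu_\sigma$ for some uniform $\tau=1-(1-\eta^2)\delta'\in(0,1)$ independent of $b$, $k$, and $v$. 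Iterating, $\int u_m^2\,d\mu_\sigma\le \tau^m\int u_0^2\,d\mu_\sigma\le \tau^m\,\text{const}\,\|v\|_{L^\infty(\mu_\sigma)}^2$, and absorbing the constant into $\tau$ (or into a slightly larger $\tau<1$ after finitely many steps) yields the claim.

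The main obstacle I anticipate is the measure-theoretic step showing that the union of the ``cancellation balls'' $\bigcup_j h_{\tilde\omega_j}(B(x_j,\delta/6|b|))$ carries $\mu_\sigma$-mass bounded below uniformly in $b$. The balls $B(x_j,\delta/6|b|)$ sit one inside each element $Q_j$ of the adapted partition $\mathcal Q$, whose pieces have diameter comparable to $|b|^{-1}$ but whose $\mu_\sigma$-measures are genuinely non-homogeneous (no Federer property for Gibbs measures of piecewise H\"older potentials); this is precisely the difficulty circumvented by Lemmas~\ref{le:partQ}--\ref{le:cancellations-measure} and Corollary~\ref{comparijpropfinite}. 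One must also track that the images under the inverse branches $h_{\tilde\omega_j}$ of these balls remain disjoint (which follows from the Markov structure and the fact that distinct $Q_j$ are disjoint together with bounded distortion), and that the weight $w=u_{k-1}^2$ indeed satisfies $|\log w|_\alpha\le K|b|^\alpha$ with $K$ absolute, so that Corollary~\ref{comparijpropfinite} applies at every iterate --- this is guaranteed inductively by cone invariance. All remaining estimates (the Jensen inequality, the conformality of $\mu_\sigma$, the uniform lower bound $\eta<1$) are routine, so the proof reduces to assembling these ingredients in the order above.
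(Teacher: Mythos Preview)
Your overall architecture---iterate the cone, bound $|v_m|\le u_m$, and show $\int u_m^2\,d\mu_\sigma$ contracts geometrically---matches the paper, and the choice $u_0\equiv 1$, $v_0=v/\|v\|_{L^\infty}$ places $(u_0,v_0)\in\mathcal C_b$ exactly as needed. The gap is in the contraction step. Your Jensen inequality $(\mathscr L^n_\sigma g)^2\le\mathscr L^n_\sigma(g^2)$ correctly yields $\int u_k^2\,d\mu_\sigma\le\int\chi_{k-1}^2u_{k-1}^2\,d\mu_\sigma$, but from here Corollary~\ref{comparijpropfinite} does \emph{not} apply as you claim. That corollary requires one interval $J_j\subset Q_j$ per element of the adapted partition $\mathcal Q$. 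The set $\{\chi_{k-1}=\eta\}$ consists instead of the \emph{preimage} balls $h_{\tilde\omega_j}(B(x_j,\delta/6|b|))$, each lying inside a cylinder $\tilde\omega_j\in\mathcal P^{(n)}$, not inside $Q_j$. Since the winners $\tilde\omega_j$ need not exhaust $\mathcal P^{(n)}$, entire cylinders $\omega$ can satisfy $\omega\cap\{\chi_{k-1}=\eta\}=\emptyset$; and because $|\log u_{k-1}|_{C^\alpha}\le C_0|b|^\alpha$ allows $u_{k-1}$ to vary by an unbounded factor across $I$, the mass of $u_{k-1}^2$ may concentrate on precisely those cylinders. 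Thus $\int_{\{\chi_{k-1}=\eta\}}u_{k-1}^2\,d\mu_\sigma$ need not be a uniform fraction of $\int_I u_{k-1}^2\,d\mu_\sigma$, and no contraction follows.

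The paper repairs this by replacing your Jensen step with a Cauchy--Schwarz splitting of the sum over inverse branches that \emph{separates} $u_m$ from $\chi_m$: one obtains $u_{m+1}^2\le\xi(\sigma)\,\mathscr L^n_0(u_m^2)\cdot\mathscr L^n_{2\sigma}(\chi_m^2)$ with $\xi(\sigma)\to1$ as $\sigma\to0$. Both factors now live on the \emph{image} side. The cancellation factor satisfies $\mathscr L^n_{2\sigma}(\chi_m^2)(y)\le\eta_1<1$ for $y$ in the image intervals $\hat I_j\subset Q_j$ (because at such $y$ one preimage $h_{\tilde\omega_j}(y)$ has $\chi_m=\eta$, and since $n$ is fixed that branch carries a uniformly positive share of the transfer-operator sum). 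The mass factor $\mathscr L^n_0(u_m^2)$ inherits $|\log(\cdot)|_{C^\alpha}\lesssim|b|^\alpha$ from $u_m$, and now the image intervals $\hat I_j$ \emph{are} one-per-$Q_j$, so Corollary~\ref{comparijpropfinite} applies with $w=\mathscr L^n_0(u_m^2)$ and gives the uniform contraction. In short: you must separate the damping function from the weight before invoking the Federer-type estimate, and this forces the Cauchy--Schwarz decomposition rather than plain Jensen.
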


\begin{proof} 
By Lemma \ref{invarcone}, taking $u_0\equiv 1,\,\, v_0=v/|v|_{\infty}$, we find a sequence $(u_m,v_m)\in \mathcal C_b$ where
$u_{m+1}=\mathscr L^{n}_{\sigma}(\chi_mu_m),$ 
$v_{m+1}=\mathscr L^{n}_s(v_m),$
and $\chi_m=\chi(b,u_m,v_m)$ is a bump-function defined as in \eqref{def-xi}. 
Thus, in view of Corollary~\ref{corollarycancel}, it is enough to prove that there exists $ \tau \in (0,1)$, independent of $b$, such that 
\begin{eqnarray}
\int {u^2_{m+1}}d\mu_\sigma
	\leqslant \tau \int{u^2_{m}}d\mu_\sigma \,\,\,\,\,\,\textit{for all}\,\,\,\,\, m\geqslant0. 
\end{eqnarray}
By definition
\begin{eqnarray*}
u_{m+1}&=&{\lambda^{-n}_\sigma}{f_{\sigma}^{-1}}\displaystyle\sum_{\omega\in\mathcal{P}^{(n)}}e^{S_{n}(\phi-\sigma r)\circ h_{\omega}}(f_{\sigma}{\chi}_mu_m)\circ h_{{\omega}}\\
&=&{\lambda^{-n}_\sigma} {f_{\sigma}^{-1}}\displaystyle\sum_{\omega\in\mathcal{P}^{(n)}} \, \big[(e^{S_{n}\phi}f_{\sigma})^{1/2}u_m
\big]\circ h_{\omega} \,\cdot\, [e^{(\frac{S_{n}\phi}{2}-\sigma S_{n} r)}(f^{1/2}_{\sigma}{\chi}_m)]\circ h_{\omega}.
\end{eqnarray*}
By Cauchy-Schwartz inequality
\begin{eqnarray}
u_{m+1}^2&\leqslant 
	&(\lambda_{\sigma}^{n}{f_{\sigma}})^{-2}\displaystyle\sum_{\omega\in\mathcal{P}^{(n)}}(e^{S_{n}\phi}f_{\sigma}{u_m^2})\circ h_{\omega} \, \cdot \, \displaystyle\sum_{\omega\in\mathcal{P}^{(n)}}e^{S_{n}(\phi-2\sigma r)\circ h_{\omega}}(f_{\sigma}{\chi}^2_m)\circ h_{\omega} \nonumber \\
&\leqslant &\xi(\sigma) \mathscr L^{n}_0(u^2_m)\mathscr L^{n}_{2\sigma}(\chi^2_m),
\label{eqCS}
\end{eqnarray}
where $\xi(\sigma):=(\lambda_{2\sigma}\lambda^{-2}_{\sigma})^{n}\left|\frac{f_\sigma}{f_0}\right|_{\infty}
\left|\frac{f_\sigma}{f_{2\sigma}}\right|_{\infty}\left|\frac{f_0}{f_\sigma}\right|_{\infty}\left|\frac{f_{2\sigma}}{f_\sigma}\right|_{\infty}$
tends to 1 as $\sigma$ tends to zero. 

\smallskip
Now, recall that according to \eqref{def-xi}, the bump functions $\chi_m : I \to [\eta,1]$ are constructed in a way that
one can decompose the interval as $I=\hat{I}_m\cup\hat{J}_m$ where $\hat I_m$ is a collection of sub-intervals determined by cancellations:  
$
\hat{I}_m=\bigcup_{j=1}^{\kappa_m} \hat{I}_{\tilde\omega_j}
$ 
where each interval $\hat{I}_{\tilde\omega_j}$ is the middle-third sub-interval of $B(x_i, \delta/2|b|)$, 
 the collection $(\tilde\omega_j)_{1\leqslant j \leqslant \kappa_m}$ of winning inverse branches in 
$\mathcal P^{(n)}$ is determined by
~\eqref{omegawinners} with $u=u_m$ and $v=v_m$, and
\begin{enumerate}
\item[(i)] $\chi_m (h_{\tilde \omega_j}(x))=\eta$, for every $x\in \hat{I}_{\tilde\omega_j}$ and $j=1,2, \dots \kappa_m$,
	\smallskip
\item[(ii)] $\chi_m (x)=1$, for every $x\notin \bigcup_{j=1}^{\kappa_m} h_{\tilde \omega_j}(B(x_j, \delta/2|b|))$, 
	\smallskip
\item[(iii)] $|\chi_m'|\leqslant |b|$
\end{enumerate}
(recall the description in Figure~2 (B)), and 
$
\hat J_m 
	=\bigcup_{j=1}^{\kappa_m} \, Q_j\setminus \hat{I}_{\tilde\omega_j}.
$
Using that $\mathscr L_{2\sigma} 1=1$ we conclude that if $y\in \hat{I}_{\tilde\omega_j}$ then
\begin{eqnarray*}
(\mathscr L^{n}_{2\sigma}\chi^2_m)(y)&\leqslant & \frac{1}{\lambda^{n}_{2\sigma}{f_{2\sigma}(y)}}
	\; \Big[\eta^2e^{(S_{n}(\phi-2\sigma r)\circ h_{\tilde\omega_j}(y)}f_{2\sigma}(h_{\tilde\omega_j}(y))\\
&&\qquad \qquad\qquad+\sum_{\substack{\omega\in\mathcal{P}^{(n)} \\ \omega\neq {\tilde\omega_j}}}e^{(S_{n}(\phi-2\sigma r) \circ h_{\omega)}(y)}f_{2\sigma}(h_{\omega}(y)) \Big]\\
&\leqslant &1-(1-\eta^2)2^{-(n+2)}|f_0|^{-1}_{\infty}\inf f_0 \,e^{-\sup_{\omega\in \mathcal P^{(n)}} |S_{n}(\phi-2\sigma r)\circ h_{\omega}|_{\infty}}=:\eta_1<1.
\end{eqnarray*}
Since $|\mathscr L^{n}_{2\sigma}(\chi^2_m)(y)|\leqslant 1$ for every $y\in I$, we deduce from ~\eqref{eqCS} that
\begin{eqnarray}\label{intcancel}
\int u_{m+1}^2 d\mu_\sigma
	\leqslant \xi(\sigma)\left(\eta_1\int_{\hat{I}_m}\mathscr L^{n}_0(u^2_m) d\mu_\sigma
+\int_{\hat{J}_m} \mathscr L^{n}_0(u^2_m)d\mu_\sigma\right).
\end{eqnarray}
At this point we use that the Gibbs measure gives to the region $\hat{I}_m$ 
a definite proportion of the interval and that such proportion (independent of the quantities $b$, $n$ or $m$). More precisely, 
Lemma ~\ref{le:cancellations-measure} and Corollary~\ref{comparijpropfinite} (applied with $\delta/2|b|$ instead of $\delta/2|b|$ and 
taking the function $\mathscr L^{n}_0(u_m^2)$) guarantee that there exists $\delta'>0$ (independent of $b$, $n$ and $m$) satisfying
\begin{eqnarray}\label{comparij}
\int_{\hat{I}_m} \mathscr L^{n}_0(u_m^2) d\mu_\sigma\geqslant {\delta}'\int_{\hat{J}_m} \mathscr L^{n}_0(u_m^2)\, d\mu_\sigma.
\end{eqnarray} 
This ensures that
\begin{eqnarray*}
\eta_1\int_{\hat{I}_m} \mathscr L^{n}_0(u_m^2)\, d\mu_\sigma+\int_{\hat{J}_m} \mathscr L^{n}_0(u_m^2)\, d\mu_\sigma
	\leqslant \tau' \int_I \mathscr L^{n}_0(u_m^2)\, d\mu_\sigma
\end{eqnarray*}
\big(taking $\tau'=\dfrac{1+\eta_1\delta'}{1+\delta'}<1$\big),
which together with \eqref{intcancel}, ultimately leads to
\begin{eqnarray*}
\int u_{m+1}^2 d\mu_\sigma &\leqslant &\xi(\sigma)\left(\eta_1\int_{\hat{I}_m}\mathscr L^{n}_0(u^2_m) d\mu_\sigma+\int_{\hat{J}_m} \mathscr L^{n}_0(u^2_m)d\mu_\sigma\right)\\
&\leqslant & \xi(\sigma)\tau' \int_I \mathscr L^{n}_0(u^2_m)d\mu_\sigma=\xi(\sigma)\tau' \int_I u^2_m d\mu_\sigma.
\end{eqnarray*}
Hence it is enough to take $\varepsilon\in(0,1)$  small so that $ \tau :=\xi(\sigma)\tau'<1$ for every $|\sigma|<\vep$.
\end{proof}

We are now in position to complete the proof of the proposition. 

\begin{proof}[Proof the Proposition \ref{mainprop}] 
Choose $b_3>b_2$ so that $k:=\lfloor \beta \log b\rfloor \gg n$ for every $|b|>b_3$ and
recall that $\beta>0$ was chosen so that
$
\lambda^{\frac{\alpha k}{16}} \leqslant C_0 |b|^\alpha
$
for every $|b|>b_2$.
Hence, any  $v\in C^{\alpha}(I)$ so that $\|v\|_{(b)}< \lambda^{\frac{\alpha k}{16}} \|v\|_{L^{\infty}(\mu_{\sigma})}$ also satisfies $\|v\|_{(b)}< C_0 |b|^\alpha \|v\|_{L^{\infty}(\mu_{\sigma})}$. 
Now, combining H\"older's inequality with Lemma~\ref{L2contrac}, we deduce that 
\begin{eqnarray*}
\|{\mathscr L}_s^kv\|_{L^{1}(\mu_\sigma)}
	\leqslant \|{\mathscr L}_s^kv\|_{L^{2}(\mu_\sigma)}
	\leqslant  \tau^{\lfloor \frac{k}{n}\rfloor} \, \|v\|_{L^{\infty}(\mu_\sigma)}^2.
\end{eqnarray*}
In particular, there exists $\xi>0$ (independent of $b$) so that $\|{\mathscr L}_s^kv\|_{L^{1}(\mu_\sigma)} \leqslant  \lambda^{-\xi k} \, \|v\|_{L^{\infty}(\mu_\sigma)}^2$, as claimed.
This completes the proof of the proposition.
\end{proof}

\subsection*{Acknowledgments}
The authors are grateful to O. Butterley for helpful comments.
This work is part of the first author's PhD thesis carried out at Federal University of Bahia (Salvador - Bahia). 
DD was supported by CAPES-Brazil. PV was partially supported by CMUP (UID/MAT/00144/2019), which is funded by FCT with national (MCTES) and European structural funds through the programs FEDER, under the partnership agreement PT2020, and 
by Funda\c c\~ao para a Ci\^encia e Tecnologia (FCT) - Portugal through the grant CEECIND/03721/2017 of the Stimulus of
Scientific Employment, Individual Support 2017 Call.

%
%


\begin{thebibliography}{98}
\providecommand{\natexlab}[1]{#1}
\providecommand{\url}[1]{\texttt{#1}}
\expandafter\ifx\csname urlstyle\endcsname\relax
  \providecommand{\doi}[1]{doi: #1}\else
  \providecommand{\doi}{doi: \begingroup \urlstyle{rm}\Url}\fi
  


\bibitem{ABV}
{V. Ara\'ujo, O. Butterley and P. Varandas, Open sets of Axiom A flows with exponentially mixing attractors. 
\textit{Proc. Amer. Math. Soc.} {144} (2016) 2971-2984.}

\bibitem{AM}
V. Ara\'ujo and I. Melbourne, Exponential decay of correlations for nonuniformly hyperbolic flows with a $C^{1+\alpha}$ stable foliation, 
including the classical Lorenz attractor, \emph{Ann. Henri Poincar\'e}, 17 (2016) 2975--3004.

\bibitem{AMV}
V. Ara\'ujo, I. Melbourne and P. Varandas, 
Rapid mixing for the Lorenz attractor and statistical limit laws for their time-1 map, 
\emph{Comm. Math. Phys.} 340 (2015) 901--938.

\bibitem{ArVar}
V.~Ara{\'u}jo and P.~Varandas.
\newblock Robust exponential decay of correlations for singular-flows.
\newblock {\em  Comm. Math. Phys.}, 311  (2012) 215--246.

\bibitem{AGY}
A.~\'Avila, S.~Gou{\"e}zel, and J.-C. Yoccoz.
\newblock {Exponential mixing for the Teichm{\"u}ller flow}.
\newblock {\em {Publ. Math. Inst. Hautes {\'E}tudes Sci.}}, {104} ({2006}) {143--211}.


\bibitem{BV}
V.~Baladi and B.~Vall{\'e}e.
\newblock {Exponential decay of correlations for surface semi-flows without
  finite Markov partitions}.
\newblock {\em {Proc. Amer. Math. Soc.}}, {133}:{3} ({2005}) {865--874}.


\bibitem{Bo73}
R.~Bowen.
\newblock {Symbolic dynamics for hyperbolic flows}.
\newblock {\em {Amer. J. Math.}}, {95} ({1973}) {429--460}.

\bibitem{Bo75}
R.~Bowen.
\newblock {\em {Equilibrium states and the ergodic theory of Anosov
  diffeomorphisms}}, volume {470} of {\em {Lect. Notes in Math.}}
\newblock {Springer Verlag}, {1975}.

\bibitem{BR75}
R.~Bowen and D.~Ruelle.
\newblock The ergodic theory of {A}xiom {A} flows.
\newblock {\em Invent. Math.}, 29 (1975) 181--202.

\bibitem{Bo78}
R.~Bowen.
\newblock {Markov partitions are not smooth}.
\newblock {\em {Proc. Amer. Math. Soc.}}, 71:1 (1978) 130--132.


%


\bibitem{BW} {O. Butterley and K. War}. 
Open sets of exponentially mixing Anosov flows,
\emph{J. Eur. Math. Soc.}, 22 (2020) 2253--2285. 
 
\bibitem{DV} D. Daltro and P. Varandas, 
Exponential decay of correlations for Gibbs measures and semiflows over $C^{1+\alpha}$ piecewise expanding maps, 
\emph{Ann. Henri Poincar\'e} (2021). https://doi.org/10.1007/s00023-020-00991-5


\bibitem{DP}
M. Denker and W. Philipp. Approximation by Brownian motion for Gibbs measures and flows under a function. 
\emph{Ergod. Th. \& Dynam. Sys.} 4 (1984) 541--552.



\bibitem{chernov98}
N.~I. Chernov.
\newblock {Markov approximations and decay of correlations for Anosov flows}.
\newblock {\em {Ann. of Math. (2)}}, {147}({2}):{269--324}, {1998}.

\bibitem{D}
D.~Dolgopyat.
\newblock {On decay of correlations in Anosov flows}.
\newblock {\em {Ann. of Math. (2)}}, {147}:{2} ({1998}) {357--390}.

\bibitem{dolgopyat98}
D.~Dolgopyat.
\newblock {Prevalence of rapid mixing in hyperbolic flows}.
\newblock {\em {Ergod. Th. \& Dynam. Sys.}}, {18}:{5} ({1998}) {1097--1114}.





\bibitem{FMT}
M.~Field, I.~Melbourne, and A.~T{\"o}rok.
\newblock {Stability of mixing and rapid mixing for hyperbolic flows}.
\newblock {\em {Ann. of Math. (2)}}, {166} ({2007}) {269{--}291}.

%
%






\bibitem{Go}
S. Gou\"ezel.
Local limit theorem for nonuniformly partially hyperbolic skew-products and Farey sequences
 \emph{Duke Math. J.} 147:2 (2009) 193--284.

\bibitem{GS}
S. Gou\"ezel and L. Stoyanov.
 Quantitative Pesin theory for Anosov diffeomorphisms and flows.
 \emph{Ergod. Th. Dynam. Sys.} 39 (2019) 159--200.

\bibitem{li}
C.~Liverani.
\newblock {On contact Anosov flows}.
\newblock {\em {Ann. of Math. (2)}}, {159}:{3} ({2004}){1275--1312}.

\bibitem{MT}
I. Melbourne and A. T\"or\"ok, 
Central limit theorems and invariance principles for time-one maps of hyperbolic
flows. \emph{Commun. Math. Phys.} 229:1 (2002) 57--71.

%

\bibitem{Plante}
J. Plante.
Anosov flows. \emph{Amer. J. Math.}, 94:3 (1972) 729--754.

\bibitem{Pol84}
M.~Pollicott.
\newblock {A complex Ruelle-Perron-Frobenius theorem and two counterexamples}.
\newblock {\em {Ergod. Th. \& Dynam. Sys.}} {(1984)} {135--146}.

\bibitem{Pol85}
M.~Pollicott.
\newblock {On the rate of mixing of Axiom A flows}.
\newblock {\em {Invent. Math.}}, {81}:3 {(1985)} {413--426}.

\bibitem{P}
M. Pollicott,  {On the mixing of Axiom A attracting flows and a conjecture of Ruelle}. 
{\em {Ergod. Th. \& Dynam. Sys.}} {19} (1999) 535--548. 

\bibitem{PS}
M. Pollicott and R. Sharp,  
Exponential error terms for growth functions of negatively curved surfaces,
\emph{Am. J. Math.} 120 (1998) 1019--1042.






\bibitem{Rat}
M. Ratner. 
\newblock Markov partitions for Anosov flows on $n$-dimensional manifolds.
\emph{Isr. J. Math.} 15 (1973) 92--114.

\bibitem{Ru76b}
D.~Ruelle.
\newblock A measure associated with {A}xiom {A} attractors.
\newblock {\em Amer. J. Math.}, 98 (1976) 619--654.


\bibitem{ruelle1983}
D.~Ruelle.
\newblock {Flots qui ne m{\'e}langent pas exponentiellement}.
\newblock {\em {C. R. Acad. Sci. Paris S{\'e}r. I Math.}},
  {296}:{4} ({1983}) {191--193}.

\bibitem{Ru86}
D. Ruelle. 
Locating resonances for AxiomA dynamical systems. 
\emph{J. Stat. Phys.}, 44  (1986) 281--292.


\bibitem{Si72}
Ya. Sinai.
\newblock Gibbs measures in ergodic theory.
\newblock {\em Russian Math. Surveys}, 27 (1972) 21--69.



\bibitem{Sto}
L. Stoyanov,
Spectra of Ruelle transfer operators for Axiom A flows,
\emph{Nonlinearity} 24 (2011) 1089--1120



\bibitem{St1}
L. Stoyanov,
Pinching conditions, linearization and regularity of Axiom A flows, 
\newblock {\em Discrete Cont. Dynam. Sys.}, 33:2 (2013) 391--412.


\bibitem{St0}
L. Stoyanov,
Spectral properties of Ruelle transfer operators for regular Gibbs measures and decay of correlations for contact Anosov flows, 
\emph{Memoirs Amer. Math. Soc.} (to appear)

\bibitem{T0}
M. Tsujii, Decay of correlations in suspension semi-flows of angle-multiplying maps. \emph{Ergod. Th. Dynam. Sys.}, 28:1 (2008) 291--317. 

\bibitem{T}
{M. Tsujii, Exponential mixing for generic volume-preserving Anosov flows in dimension three, 
\emph{J. Math. Soc. Japan.} {70}  (2018) 757-821.}%

\bibitem{TZ}
M. Tsujii and Z. Zhang, 
Smooth mixing Anosov flows in dimension three are exponential mixing, 
Preprint arXiv:2006.04293 

%


\end{thebibliography}
\end{document}